\documentclass[11pt,letterpaper]{amsart}
\usepackage{amscd,amsmath,amssymb,euscript}
\usepackage[frame,cmtip,curve,arrow,matrix,line,graph]{xy}
\usepackage{tikz-cd}
\usepackage{bigints}
\usetikzlibrary{matrix}
\usepackage{hyperref}
\usepackage{stmaryrd}

\title[NCRs and IH for quotient singularities]{Noncommutative resolutions and intersection cohomology for quotient singularities}

\author{Tudor P\u adurariu}
\address{Department of Mathematics, Columbia University, 
2990 Broadway, New York, NY 10027}
\email{tgp2109@columbia.edu}



\setlength{\topmargin}{0.1\topmargin}
\setlength{\oddsidemargin}{0.5\oddsidemargin}
\setlength{\evensidemargin}{0.5\oddsidemargin}
\setlength{\textheight}{1.02\textheight}
\setlength{\textwidth}{1.1\textwidth}

\newtheorem{thm}{Theorem}[section]
\newtheorem{cor}[thm]{Corollary}
\newtheorem{prop}[thm]{Proposition}

\theoremstyle{definition}

\newtheorem{thm*}[thm]{Theorem$^*$}

\newcommand{\comment}[1]{}

\renewcommand{\leq}{\leqslant}
\renewcommand{\geq}{\geqslant}


\newcommand{\D}{\mathbb{D}}
\newcommand{\F}{\mathcal{F}}

\newcommand{\OO}{\mathcal{O}}

\newcommand{\Z}{\mathcal{Z}}

\newcommand{\Sa}{\mathcal{S}}
\newcommand{\Y}{\mathcal{Y}}
\newcommand{\HH}{\mathrm{H}}
\newcommand{\IH}{\mathrm{IH}}
\newcommand{\KK}{\mathrm{K}}
\newcommand{\IK}{\mathrm{IK}}
\newcommand{\IC}{\mathrm{IC}}
\newcommand{\HP}{\mathrm{HP}}

\newcommand{\X}{\mathcal{X}}



\newcommand{\C}{\mathbb{C}}




\begin{document}
\maketitle

\begin{abstract}
    For a large class of good moduli spaces $X$ of symmetric stacks $\mathcal{X}$,
    we define noncommutative motives $\mathbb{D}^{\text{nc}}(X)$ which can be regarded as categorifications of the intersection cohomology of $X$. These motives are summands of noncommutative resolutions of singularities $\mathbb{D}(X)\subset D^b(\X)$ of $X$.  The category $\mathbb{D}(X)$ is a global analogue of
    the noncommutative resolutions of singularities of $V\sslash G$ for $V$ a symmetric representation of a reductive group $G$ constructed by \v{S}penko--Van den Bergh.
    
\end{abstract}


\section{Introduction}

\subsection{Intersection cohomology}  Intersection cohomology $\IH^{\cdot}(X, \mathbb{Q})$ and the BBDG Decomposition Theorem \cite{BBD} are important tools in the study of the topology of algebraic varieties. They have many applications in representation theory, see \cite{L}, or more recently in construction of representation of $\mathcal{W}$-algebras due to Braverman--Finkelberg--Nakajima \cite{BFN} and in the study of (Kontsevich--Soibelman) Cohomological Hall algebras \cite{COHA}, \cite{DM}. 
It is an important problem to define a K-theoretic version of intersection cohomology and it is expected that such a theory will have applications in representation theory. 
More generally, one can try to construct categorifications of intersection cohomology.

\subsection{Noncommutative resolutions}\label{nonres}
A natural place to look for such categorifications is inside noncommutative resolutions (NCRs) of $X$. There are more NCRs than usual resolution of singularities. 
A conjecture of Bondal--Orlov \cite[Section 5]{BO} says that there exists a \textit{minimal} NCR, i.e. a category admissible inside any other NCR of $X$. 

We look for natural candidates of minimal NCRs $\mathbb{D}$ of good moduli spaces $X$ of Artin stacks $\X$. Inspired by the Decomposition Theorem, we expect the periodic cyclic homology $\text{HP}_{\cdot}\left(\mathbb{D}\right)$ to have $\bigoplus_{i\in\mathbb{Z}}\IH^{\cdot+2i}(X, \mathbb{C})$ as a direct summand. We then try to construct noncommutative motives $\mathbb{D}^{\text{nc}}$ inside $\mathbb{D}$ whose periodic cyclic homology is $\bigoplus_{i\in\mathbb{Z}}\IH^{\cdot+2i}(X, \mathbb{C})$.


\subsection{NCRs for good moduli spaces}\label{assum}
We make three possible assumptions on a stack $\X$:

\textbf{A.} $\X$ is an algebraic stack locally of finite type over $\C$ with a good moduli space $\pi:\X\to X$ (as defined by Alper \cite{A}) such that $\pi$ has affine diagonal.

\textbf{B.} In addition to the above, assume that $\dim\,X=\dim\,\X$.

\textbf{C.} In addition to the above, assume that $\pi$ is generically an isomorphism.
\smallskip

Let $\X$ a smooth stack which satisfies Assumption A and let $p\in X(\mathbb{C})$.  By work of Alper--Hall--Rydh \cite[Theorem 1.2]{AHR}, 
there exists a smooth affine scheme $A$ with an action of a reductive group $G$ such that the following diagram is cartesian

\begin{equation}\label{d}
\begin{tikzcd}
\X \arrow[d, "\pi"'] & A/G \arrow[l, swap, "e"] \arrow[d]\\
X & A\sslash G, \arrow[l, swap, "e"]
\end{tikzcd}
\end{equation}
and $e$ is an \'etale map which contains $p$ in its image. The NCRs we are using are a global version of the NCRs constructed by \v{S}penko--Van den Bergh \cite{SvdB} for quotients $V/G$ for $V$ a representation of $G$.

\begin{thm}\label{thm1}
Let $\X$ be a symmetric stack satisfying Assumption B with good moduli space $X$. 
Then there exists a subcategory $\D(\X)$ admissible in $D^b(\X)$ which is an NCR of $X$. Its complement is generated by complexes supported on attracting stacks/ $\Theta$-stacks of $\X$.
\end{thm}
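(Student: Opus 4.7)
The plan is to build $\D(\X)$ étale-locally by assembling \v{S}penko--Van den Bergh noncommutative resolutions on the slice presentations \eqref{d}, and then to verify that these local subcategories descend to a global admissible subcategory of $D^b(\X)$. On each local chart $e : [A/G] \to \X$ supplied by \cite{AHR}, one invokes \cite{SvdB} to produce a magic window $\M([A/G]) \subset D^b([A/G])$ generated by equivariant vector bundles $\OO_A \otimes V(\chi)$ with weights $\chi$ lying in a $W$-invariant polytope $\Sigma$ in the weight lattice, determined by the $G$-weights of $T_p A$ at a closed point $p$. The symmetric hypothesis forces $\Sigma$ to be symmetric about the origin and pins down the polytope up to a generic $W$-invariant perturbation. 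Each $\M([A/G])$ is $\OO_{A \sslash G}$-linear, admissible in $D^b([A/G])$, of finite global dimension, and is an NCR of $A \sslash G$.

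One then defines $\D(\X) \subset D^b(\X)$ as the subcategory consisting of objects $F$ whose restriction $e^*F$ lies in $\M([A/G])$ for every AHR chart $e$ as above. The crucial observation is that $\Sigma$ is built intrinsically from the tangent representation of the closed-point stabilizer, and these stabilizers together with their tangent data are preserved by the slice morphisms of \cite{AHR}; hence the window condition is compatible on overlaps of charts. Assumption B, by forcing $\dim \X = \dim X$, ensures that the weights entering the definition behave uniformly and that no excess fiber directions contribute spurious weights. Admissibility of $\D(\X)$ then follows by combining the local admissibility of each $\M([A/G])$ with étale descent for semiorthogonal decompositions, and the NCR property is obtained by gluing the local tilting generators and using that their endomorphism algebras are Cohen--Macaulay $\OO_X$-modules.

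For the complement, on each local chart the \v{S}penko--Van den Bergh analysis identifies the orthogonal of $\M([A/G])$ with the subcategory generated by pushforwards from the Kempf--Ness attracting loci for one-parameter subgroups that destabilize $\Sigma$; this is precisely the data of a Halpern--Leistner $\Theta$-stratification. Globally, these local attracting loci glue to $\Theta$-stacks of $\X$, and the complement of $\D(\X)$ in $D^b(\X)$ is generated by complexes pushed forward from these $\Theta$-stacks.

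The main obstacle is the descent step above. The \v{S}penko--Van den Bergh magic window depends on a generic parameter in the weight lattice of a given chart, and \emph{a priori} different AHR charts carry incomparable weight lattices attached to different closed-point stabilizers. Proving that the window condition is intrinsic enough to glue canonically across overlapping charts, and that the resulting global subcategory $\D(\X)$ is independent of the chosen AHR cover, is where the bulk of the technical work will live. Once this compatibility is in place, admissibility and the $\Theta$-stack description of the complement follow relatively cleanly from their local counterparts via descent.
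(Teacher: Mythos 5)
Your overall strategy — localize to AHR charts, invoke the \v{S}penko--Van den Bergh magic windows, and identify the complement with pushforwards from attracting loci — matches the spirit of the paper. But there are two genuine gaps where you flag a difficulty or wave your hands, and in both places the paper uses a concrete idea that your write-up is missing.

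First, the descent issue that you correctly identify as the main obstacle. Your proposal defines $\D(\X)$ by the condition ``$e^*F$ lies in $\M([A/G])$ for every chart $e$,'' which forces you to prove that the window conditions attached to incomparable weight lattices on different charts are compatible. The paper sidesteps this entirely by giving an \emph{intrinsic} definition: $\D_\ell(\X)$ consists of those $\F$ such that for every map $\lambda:B\C^*\to\X$ with image in $\X(\C)$, one has $-\tfrac{n_\lambda}{2}+\mathrm{wt}\,\lambda^*\ell\le \mathrm{wt}\,\lambda^*\F\le \tfrac{n_\lambda}{2}+\mathrm{wt}\,\lambda^*\ell$, where $n_\lambda=\mathrm{wt}\,\lambda^*(\det\mathbb{L}_\X^{\lambda>0})$. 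Since $\Theta$-points and the cotangent complex are global notions, there is nothing to glue; on any chart the condition agrees tautologically with the \v{S}penko--Van den Bergh window for the slice representation. The semi-orthogonality $R\pi_*R\mathcal{H}om(\D,\mathbb{A})=0$ and generation are then checked after formal completion at points of $X$ using Corollary~\ref{ahr2}, reducing to the local statement \cite[Prop.~8.4]{SvdB} — not via a formal ``étale descent for SODs'' machine.

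Second, your plan for smoothness and properness — ``gluing the local tilting generators and using that their endomorphism algebras are Cohen--Macaulay $\OO_X$-modules'' — does not correspond to anything the paper does, and it is unclear you can carry it out: there is no global tilting bundle constructed, and Cohen--Macaulayness of endomorphism algebras is not the mechanism by which the paper establishes the NCR property. What the paper actually does (Subsection~\ref{ncr2}) is show that $\D_\ell(\X)$ is admissible inside $D^b(Y)$ for $Y\to X$ the Edidin--Rydh Kirwan (partial desingularization) resolution, which is a smooth proper DM stack over $X$. This is done by an inductive variation-of-GIT argument comparing window widths across each Reichstein/Kirwan blow-up step (using \cite[Theorem 3.9]{HL}), together with Proposition~\ref{indsym} showing the symmetric hypothesis propagates through the blow-ups. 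The adjoint pair exhibiting $\D(\X)$ as an NCR of $X$ is then $\pi_*\iota$ and $\Phi\pi^*$, with $\pi_*\iota\Phi\pi^*=\id$ on $\mathrm{Perf}(X)$ following from $\pi_*\OO_\X=\OO_X$. Without the Kirwan resolution input, your argument does not establish smoothness or properness of $\D(\X)$, so the NCR claim is not yet proved even granting the descent step.
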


Assume that $\X=V/G$ where $V$ is a symmetric $G$-representation, i.e. the weights $\beta$ and $-\beta$ of $G$ have the same multiplicity in $V$. 
Let $M$ be the weight lattice of $G$, let $M_{\mathbb{R}}=M\otimes\mathbb{R}$, and let $W$ be the Weyl group of $G$. 
For $\lambda: \mathbb{C}^*\to G$ a cocharacter, define
\[n_{\lambda}:=\langle \lambda, \det\mathbb{L}_{\X}^{\lambda>0}\rangle,\]
where $\mathbb{L}_{\X}$ is the cotangent complex of $\X$.
Let $\delta\in M_{\mathbb{R}}^W$. The NCR constructed by \v{S}penko--Van den Bergh \cite{SvdB} is as follows: $\mathbb{D}_\delta(\X)$ is the full subcategory of $D^b(\X)$ generated by complexes $\mathcal{F}$ such that for any cocharacter $\lambda:\mathbb{C}^*\to G$:
\begin{equation}\label{bounds}
    -\frac{n_{\lambda}}{2}+\langle \lambda, \delta\rangle\leq \langle \lambda, \mathcal{F}|_0\rangle \leq \frac{n_{\lambda}}{2}+\langle \lambda, \delta\rangle.
    \end{equation}


Next, let $\mathcal{X}$ be as in Assumption B. Recall the description \eqref{d} for a point $p\in X$. Locally analytically near a point $x\in\X(\C)$, the stack $\X$ is isomorphic to an open analyic subset of $N/G$, where $G$ is the stabilizer of $x$ and $N$ is its normal bundle in $\X$. We assume that all such representations are symmetric, and call stacks $\X$ with this property \textit{symmetric}. Note that symmetric stacks are smooth.
Let $\ell\in \text{Pic}(\X)_{\mathbb{R}}$. For a morphism $\lambda:B\C^*\to \X$ with image in $\X(\C)$, define 
\[n_{\lambda}:=\text{wt}\,\lambda^*(\text{det}\,\mathbb{L}_{\mathcal{X}}^{\lambda>0}).\] 
The category $\mathbb{D}_\ell(\X)$ is the full subcategory of $D^b(\X)$ such that for any map $\lambda:B\C^*\to \X$ with image in $\X(\C)$, we have
\[-\frac{n_{\lambda}}{2}+\text{wt}\, \lambda^*\ell \leq  \text{wt}\,\lambda^*(\mathcal{F}) \leq \frac{n_{\lambda}}{2}+\text{wt}\, \lambda^*\ell.\]

To show $\mathbb{D}_\ell(\X)$ is a smooth and proper category over $X$, we show in Subsection \ref{ncr2} that $\D_\ell(\X)$ is admissible inside the Kirwan resolution of $\X$, a DM stack constructed by Edidin--Rydh \cite{ER} which recovers the Kirwan resolution in the local case \cite{Ki}. The analogous result for quotients $Y/G$ was proved by \v{S}penko--Van den Bergh \cite{SvdB3}. Our approach is different from theirs and uses the variation of GIT techniques from \cite{BFK}, \cite{HL}, \cite{HL2}. In order for $\pi^*: D^b(X)\to D^b(\X)$ to have image in $\mathbb{D}_\ell(\X)$, we choose $\ell$ to be trivial and drop it from the notation of $\mathbb{D}(\X)$. 

\subsection{Intersection cohomology of symmetric good moduli spaces}

Let $\X$ be a symmetric stack satisfying Assumption B from Subsection \ref{assum}. Define \begin{equation}\label{cohh}
    \textbf{B}:=\text{image}\,\left(\bigoplus_{\mathcal{S}} \HH^\cdot(\mathcal{Z})\xrightarrow{p_*q^*} \HH^\cdot(\X)\right),
    \end{equation}
over all attracting stacks $\mathcal{S}$ (different from $\X)$ with fixed stack and associated maps $\mathcal{Z}\xleftarrow{q}\mathcal{S}\xrightarrow{p}\X$ and the singular cohomology is with $\mathbb{Q}$ coefficients. We denote by $\textbf{P}^{\leq 0}\HH^\cdot(\X)$ the zeroth piece of the perverse filtration on $\HH^\cdot(\X)$ induced by the map $\pi:\X\to X$. Observe that if $\X$ satisfies Assumption C, then $\textbf{P}^{\leq 0}\HH^\cdot(\X)\cong\IH^\cdot(X)$.
In Section \ref{comp}, we show that:
\begin{thm}\label{thm12}
For $\X$ a symmetric stack satisfying Assumption B, there is a  decomposition
\[\HH^\cdot(\X)=\textbf{P}^{\leq 0}\HH^\cdot(\X)\oplus\textbf{B}.\]
\end{thm}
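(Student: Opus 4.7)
The plan is to deduce the decomposition from a Decomposition-Theorem-style splitting of $R\pi_*\QQ_\X$: the $\IC$ summand on $X$ will contribute $\textbf{P}^{\leq 0}\HH^\cdot(\X)$, while the remaining summands will exactly account for the pushforwards from the attracting stacks that span $\textbf{B}$. The main difficulty is that $\pi$ is not proper, so BBDG does not apply directly; I would circumvent this via the Kirwan--Edidin--Rydh resolution $\widetilde{\X}\to\X$ of Subsection \ref{ncr2}.

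First I would reduce locally: by Assumption B and diagram \eqref{d}, \'etale-locally and analytically near any $x\in\X(\C)$ the map $\pi$ is modeled by a symmetric quotient $N/G\to N\sslash G$. The Kempf--Ness/Hesselink/$\Theta$-stratifications of the unstable locus assemble globally into a filtration of $\X$ whose strata are exactly the attracting stacks $\mathcal{S}$ appearing in \eqref{cohh}; the Kirwan resolution $\widetilde{\X}$ is obtained by successively blowing up these strata.

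Second, since $\widetilde{\pi}:\widetilde{\X}\to X$ is proper and representable with smooth source, the BBDG Decomposition Theorem splits $R\widetilde{\pi}_*\QQ_{\widetilde{\X}}$ into a sum of shifted $\IC$-sheaves on closed subvarieties of $X$, with $\IC_X$ occurring as the unique generically supported summand. The symmetric hypothesis makes each Kirwan blow-up cohomologically balanced (the integers $n_\lambda$ from the introduction control the shifts), so the extra cohomology introduced by each blow-up is matched by pushforwards from the attracting stack that was blown up at that stage. Descending from $\widetilde{\X}$ to $\X$ then yields
\[R\pi_*\QQ_\X\cong\mathcal{F}_0\oplus\mathcal{F}_+,\]
where $\mathcal{F}_0$ is the $\IC_X$ summand and $\mathcal{F}_+$ is supported on $\bigcup\pi(\mathcal{S})$ and built out of pushforwards from attracting stacks.

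Finally, $\HH^\cdot(X,\mathcal{F}_0)=\textbf{P}^{\leq 0}\HH^\cdot(\X)$ by definition of the perverse filtration induced by $\pi$, while proper base change along the proper maps $p:\mathcal{S}\to\X$ and $q:\mathcal{S}\to\mathcal{Z}$ identifies the image of $\bigoplus_\mathcal{S} p_*q^*$ from \eqref{cohh} with $\HH^\cdot(X,\mathcal{F}_+)$, giving the direct-sum decomposition. The main obstacle is the second step: showing that the BBDG decomposition on the Kirwan resolution descends cleanly to $\X$ with the complementary piece spanned precisely by pushforwards from attracting stacks. This requires the symmetric hypothesis to ensure the blow-ups are balanced, together with a Halpern--Leistner-type compatibility between the $\Theta$-stratification of $\X$ and the perverse truncation for $\pi$.
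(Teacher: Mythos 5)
Your overall blueprint is right---decompose $R\pi_*\IC_\X$ via a stacky version of BBDG and then use the symmetric hypothesis to put the attracting-stack contributions in strictly positive perverse degree---but the tool you chose to establish the decomposition theorem for the non-proper map $\pi$ does not work. The Kirwan--Edidin--Rydh resolution $Y\to X$ is a \emph{proper DM stack} over $X$, so $R\widetilde{\pi}_*\QQ_Y$ has bounded, finite-dimensional cohomology; but $\HH^\cdot(\X)$ is typically infinite-dimensional (already for $\X=V/G$ with $G$ positive-dimensional one has $\HH^\cdot(\X)=\HH^\cdot_G(V)\cong\HH^\cdot(BG)$). Hence $R\pi_*\QQ_\X$ is simply not a summand of, nor recoverable from, $R\widetilde{\pi}_*\QQ_Y$, and there is no ``descent'' of the BBDG splitting from $Y$ to $\X$. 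You flagged this step as ``the main obstacle,'' and indeed it is not an obstacle that can be filled in as stated. What the paper does instead (Proposition \ref{bbdg}) is approximate $\X$ by \emph{Totaro-style} quotient approximations $\X_n=(V\oplus V^{\oplus n})/G\times\C^*$ with associated GIT quotients $S_n$; these are genuine schemes proper over $X$ with finite-quotient singularities, and for each fixed perverse degree $a$ one shows ${}^p\tau^{\leq a}R\pi_*\QQ_\X$ agrees with ${}^p\tau^{\leq a}Rq_{n*}\QQ_{S_n}$ for $n\gg 0$ (Propositions \ref{stable}, \ref{truncations}). Unlike the Kirwan resolution, this approximation tower captures arbitrarily high perverse degrees.

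Two further points are glossed over and require real arguments. First, even once the decomposition of $R\pi_*\IC_\X$ is in hand, identifying the non-generically-supported summands with images of $p_*q^*$ from attracting stacks is not automatic; the paper invokes a Corti--Hanamura-type result (Proposition \ref{CH}) to show a summand supported on $X^\lambda$ factors through $R\pi_*p_*\IC_{\X^{\lambda\geq 0}}[-c_\lambda]$. Second, the claim that these factored pieces land in ${}^pD^{\geq 1}$ is exactly where the symmetric hypothesis enters: it forces $\operatorname{reldim} q=c_\lambda$, and the proof proceeds by induction on $\dim G$ (Proposition \ref{prp}). Your phrase ``the integers $n_\lambda$ control the shifts'' gestures at this, but the balancing happens at the level of $p_*q^*$ of the fixed-stack $\IC$, not at the level of blow-up centers, and the inductive structure is essential. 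So the proposal is directionally correct but replaces the two load-bearing technical ingredients (Totaro approximation, the Corti--Hanamura support argument) with steps that either fail outright or are asserted without proof.
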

The above result follows from a version of the BBDG Decomposition Theorem for stacks, see Proposition \ref{bbdg}. The exceptional loci are covered by the attracting stacks, and by the symmetric assumption on $\X$, the images of classes from attracting stacks are in positive perverse degree. 

\subsection{Noncommutative motives for symmetric good moduli spaces}\label{abcd} Let $\X$ be a symmetric stack satisfying Assumption B from Subsection \ref{assum}.
In Section \ref{intKtheory} we define a noncommutative motive with rational coefficients $\D^{\text{nc}}(\X)=(\D(\X), e)$ where $e$ is an idempotent in $\KK_0\left(\text{rep}\left(\D(\X), \D(\X)\right)\right)_\mathbb{Q}$, see Subsection \ref{ncmotives} for a brief discussion of noncommutative motives.
Define 
\[\textbf{B}\KK_\cdot(\X):=\text{image}\,\left(\bigoplus_{\mathcal{S}} \KK_\cdot(\mathcal{Z})\xrightarrow{p_*q^*} \KK_\cdot(\X)\right),\]
where the sum is an in \eqref{cohh} and $\KK_\cdot$ is rational K-theory.
In Section \ref{cat}, we show an analogue on Theorem \ref{thm12} in K-theory:
\begin{thm}\label{thm13}
For $\X$ a symmetric stack satisfying Assumption B, there is a decompositions
\[\KK_\cdot(\X)=\KK_\cdot(\mathbb{D}^{\text{nc}}(\X))\oplus\textbf{B}\KK_\cdot(\X).\]
\end{thm}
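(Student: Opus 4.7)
The plan is to mirror the proof of Theorem \ref{thm12}, upgrading the cohomological decomposition to K-theory by exploiting the semiorthogonal structure of $D^b(\X)$ established in Theorem \ref{thm1} together with the idempotent that defines $\D^{\text{nc}}(\X)$. The starting point is that, by Theorem \ref{thm1}, $\D(\X)$ is admissible in $D^b(\X)$, so there is a semi-orthogonal decomposition $D^b(\X) = \langle \A, \D(\X)\rangle$ with $\A$ generated by complexes supported on non-trivial attracting/$\Theta$-stacks. Applying rational K-theory, which is a localizing invariant, yields a split decomposition
\[\KK_\cdot(\X) = \KK_\cdot(\A) \oplus \KK_\cdot(\D(\X)).\]
Since every generator of $\A$ has the form $p_*\mathcal{G}$ for $p:\Sa\to\X$ the inclusion of a non-trivial attracting stack with retraction $q:\Sa\to\Z$, each such class equals $p_*q^*\mathcal{H}$ for some $\mathcal{H}\in D^b(\Z)$, so $\KK_\cdot(\A) \subseteq \textbf{B}\KK_\cdot(\X)$.

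Next, the noncommutative motive $\D^{\text{nc}}(\X) = (\D(\X), e)$, defined via an idempotent $e \in \KK_0(\text{rep}(\D(\X),\D(\X)))_\QQ$, gives a further rational splitting
\[\KK_\cdot(\D(\X)) = \KK_\cdot(\D^{\text{nc}}(\X)) \oplus (1-e)\,\KK_\cdot(\D(\X)).\]
The idempotent $e$ should be set up so that $\KK_\cdot(\D^{\text{nc}}(\X))$ categorifies $\textbf{P}^{\leq 0}\HH^\cdot(\X)$; equivalently, the complementary summand $(1-e)\,\KK_\cdot(\D(\X))$ consists of attracting-stack contributions that survive inside $\D(\X)$ (those not already killed by the admissibility decomposition). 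Combining with the previous step gives
\[\KK_\cdot(\X) = \KK_\cdot(\D^{\text{nc}}(\X)) \oplus \bigl((1-e)\,\KK_\cdot(\D(\X)) \oplus \KK_\cdot(\A)\bigr),\]
and the theorem reduces to identifying the parenthesized summand with $\textbf{B}\KK_\cdot(\X)$.

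The main obstacle is constructing $e$ and proving that $(1-e)\,\KK_\cdot(\D(\X)) \oplus \KK_\cdot(\A)$ is exactly $\textbf{B}\KK_\cdot(\X)$ rather than merely contained in it. I expect this to rely on a K-theoretic analogue of the Decomposition Theorem Proposition \ref{bbdg} used in Section \ref{comp}: one decomposes the pushforward $R\pi_*$ of a suitable motivic object into IC-type summands indexed by strata, and the symmetric assumption forces every summand supported on a non-trivial attracting stack to appear in strictly positive perverse degree. The idempotent $e$ is then defined as the projection onto the zeroth piece of this K-theoretic perverse filtration. The nontrivial check is that this projection lifts from an abstract endomorphism of $\KK_\cdot(\X)$ to an actual class in $\KK_0(\text{rep}(\D(\X),\D(\X)))_\QQ$, so that the resulting summand really has the form $\KK_\cdot(\D^{\text{nc}}(\X))$; this lifting should come from a categorical incarnation of the semi-orthogonal / $\Theta$-stratification machinery (à la Halpern-Leistner) underlying Theorem \ref{thm1}, applied stratum by stratum.
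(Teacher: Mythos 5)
Your opening moves are in the same direction as the paper's \textit{consequence} of its argument, but your proposed path to the heart of the theorem would not work as written, and you leave the genuinely hard part as a gap.

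What you get right: by Theorem \ref{thm1} (really Theorem \ref{symsod}) there is a semi-orthogonal decomposition $D^b(\X)=\langle \mathbb{A},\mathbb{D}(\X)\rangle$, hence a splitting $\KK_\cdot(\X)=\KK_\cdot(\mathbb{A})\oplus \KK_\cdot(\mathbb{D}(\X))$, and the generators of $\mathbb{A}$ do land in $\textbf{B}\KK_\cdot(\X)$. This much is accurate and plays a role in the paper's Corollary \ref{thm5}.

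Where the plan goes wrong: you propose that the remaining splitting of $\KK_\cdot(\mathbb{D}(\X))$ should come from ``a K-theoretic analogue of the Decomposition Theorem Proposition \ref{bbdg}'' and a ``K-theoretic perverse filtration.'' No such object is constructed in the paper, nor is it clear one exists; the paper explicitly does \textit{not} transport the perverse-sheaf argument to K-theory. Instead, the entire substance of Section \ref{cat} is a hands-on algebraic replacement: explicit shuffle formulas for $m_\lambda$ and $\Delta_\mu$ (Propositions \ref{computations}--\ref{computations4}), the product--coproduct compatibility $\Delta_\mu m_\lambda = \sum_{\textbf{S}}\widetilde{m^\mu_\nu}\Delta^\lambda_\nu$ of Theorem \ref{copr}, and the Mackey-style orthogonality facts (Propositions \ref{compequal}, \ref{compdiff}) that let one define $\textbf{P}\KK_\cdot(\mathbb{D}(\X^{\lambda})_b)$ by induction on $\dim G^\lambda$ as an intersection of kernels of $\Phi_H$. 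The decomposition of Theorem \ref{deco} is established by this inductive process, and only then is the global statement deduced via Thomason's \'etale descent for rational K-theory. The idempotent $e$ is not ``lifted from an abstract endomorphism of $\KK_\cdot(\X)$'' by appeal to a decomposition theorem; it is read off from the fact that the splitting was built from the concrete functors $p_*$, $p^*$, $q^*_w$, which are Fourier--Mukai and hence already live in $\text{rep}(\mathbb{D}(\X),\mathbb{D}(\X))$. So the gap in your proposal is precisely the construction you defer: the inductive primitive-part construction and the product--coproduct identity are the theorem, and a hypothetical K-theoretic BBDG is neither available nor used.
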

The category $\D(\X)$ contains, in general, complexes on attracting stacks, but it may be indecomposable as a triangulated category. For $\lambda :B\mathbb{C}^*\to \X$ with associated fixed stack $\Z$, let $\D(\Z)_b$ be the subcategory of $\D(\Z)$ of complexes on which $\lambda$ acts with weight $b=\frac{n_{\lambda}}{2}$.
The motive $\D^{\text{nc}}(\X)$ is a complement of the images \[m_\lambda:=p_*q^*: \D(\Z)_b\rightarrow \D(\X)\] from fixed substacks $\Z$. By a result of Thomason \cite[Corollary 2.17]{Th}, one can compute rational K-theory using an \'etale cover, so it suffices to check the statement in the local case $\X=A/G$, where $A$ is a smooth affine scheme.
The main tool in understanding these images for different fixed substacks is a product-coproduct type compability that we briefly explain. Let $\lambda$ and $\mu$ be two dominant cocharacters, let $\textbf{S}$ be a set of cocharacters $\nu$ refining $\mu$ and $\lambda$, see Subsection \ref{S}. Then the following diagram commutes:
\begin{equation*}
\begin{tikzcd}
\KK_\cdot\left(\D(\X^\lambda)_b\right)\arrow[d, "\bigoplus_{\textbf{S}} \Delta^\lambda_\nu"']\arrow[r, "m_\lambda"]& \KK_\cdot(\D(\X))\arrow[d, "\Delta_\mu"]\\
\bigoplus_{\textbf{S}} \KK_\cdot\left(\D(\X^\nu)_b\right) \arrow[r, "\bigoplus_{\textbf{S}} \widetilde{m^\mu_{\nu}}"] & \KK_\cdot\left(\D(\X^\mu)_b\right).
\end{tikzcd}
\end{equation*}
Here, $\Delta$ are restriction maps and $\widetilde{m}$ denotes a twist of the multiplication. We check that the diagram commutes by a direct computation using shuffle formulas for $m$ and $\Delta$. 

\subsection{Intersection K-theory}
Let $\X$ be a symmetric stack satisfying Assumption C and let $X$ be its good moduli space. Then $\textbf{P}^{\leq 0}\HH^\cdot(\X)\cong\IH^\cdot(X)$. Given Theorems \ref{thm12} and \ref{thm13}, we propose to call $\KK_\cdot(\D^{\text{nc}}(\X))=:\IK_\cdot(X)$ \textit{the intersection K-theory of} $X$. 
There is a Chern character map
\[\text{ch}: \IK_\cdot(X)\to \IH^\cdot(X).\]
Further, from the construction of $\IK_\cdot(X)$, we obtain a natural surjection
\[\KK_\cdot(\X)\twoheadrightarrow \IK_\cdot(X).\]
The analogous statement in cohomology was proved by Kirwan \cite{Ki2}. 
Further, in Subsection \ref{Kirwa}, we show that if the Kirwan resolution is a scheme, then 
\[\HP_i\left(\mathbb{D}^{\text{nc}}(\X)\right)\cong \bigoplus_{j\in\mathbb{Z}} \IH^{i+2j}(X,\mathbb{C}).\]

We use the construction of K-theory of quiver varieties to prove a version of a Poincaré-Birkhoff-Witt Theorem for K-theoretic Hall algebras of quivers with potential \cite[Theorem 1.2]{P2}. For example, for a quiver $Q$, the K-theoretic Hall algebra $\text{KHA}(Q,0)$ for zero potential is generated by spaces which are twisted versions of $\IK_\cdot(X(d))$, where $X(d)$ is the coarse space of representation of $Q$ of dimension $d$.


\subsection{Previous work on intersection K-theory.}\label{previouswork}
There are other approaches of defining intersection K-theory in particular cases. Cautis \cite{C}, Cautis--Kamnitzer \cite{CK} have an approach for categorification of intersection sheaves for certain subvarieties of the affine Grassmannian.
For varieties with a cellular stratification, Eberhardt proposed a definition in \cite{E}. 

A related problem is defining intersection Chow groups. Corti--Hanamura \cite{CH1}, \cite{CH2} proposed two approaches towards intersection Chow groups for general varieties $X$, one which proves a version of the decomposition theorem under some conjectures.
In \cite{P2}, we propose a definition of intersection (graded) $\text{gr}^\cdot \KK$-theory of any singular variety $X$ which is a summand of $\text{gr}^\cdot \KK_\cdot(Y)$ for any resolution of singularities $Y\to X$; the associated graded is taken with respect to the codimension filtration, so its zeroth level is a version of intersection Chow groups.
De Cataldo--Migliorini \cite{dCM} proposed a definition of intersection Chow motive for singularities with a semismall resolution.



\subsection{Future directions}
The definitions of $\D^{\text{nc}}(\X)$ and $\IK_\cdot(X)$ are for symmetric stacks $\X$. It is worth trying to find analogues of these constructions beyond symmetric stacks. One idea is to look for symmetric substacks $\X'\subset \X$ whose complement is a union of $\Theta$-strata and thus have good moduli spaces $X'$. It is true that $\IH^\cdot(X)$ is a direct summand of $\IH^\cdot(X')$, but we do not know how to characterize the difference between them.



The discussion in Subsection \ref{nonres} serves as motivation for our work, but we do not make progress towards the Bondal--Orlov conjecture. 
In general, $\mathbb{D}(\X)$ and $\mathbb{D}^{\text{nc}}(\X)$ are different. However, in the cases in which they are equal, for example for quotients $V/T$ where $T$ is a torus and $V$ is a $T$-representation, 
it is natural to guess that $\mathbb{D}(V/T)$ is minimal in the sense of Bondal--Orlov. 
The category $\D(V/T)$ is indecomposable \cite[Appendix A]{SvdB2}, so it should be minimal if the Bondal--Orlov conjecture is true for this particular class of singularities. 

We plan to return to these questions in future work.

\subsection{Acknowledgements} I thank Davesh Maulik for numerous conversations related to the present paper. 
I thank the Institute of Advanced Studies for support during the preparation of the paper. This material is based upon work supported by the National Science Foundation under Grant No. DMS-1926686.

\section{Preliminaries}\label{prel}

\subsection{Conventions and notations}\label{conven}
All spaces considered are over $\mathbb{C}$. All schemes considered are of finite type over $\mathbb{C}$. All points considered are $\mathbb{C}$-points unless otherwise stated.

For $A$ a scheme with an action of a reductive group $G$, we denote the quotient stack by $A\slash G$ and its coarse space by $A\sslash G$.

For $X$ a scheme or stack, denote by  $\text{QCoh}\,(X)$ the category of (unbounded) complexes of quasicoherent sheaves on $X$, by 
$D^b\text{Coh}\,(X)$ its subcategory of compact objects, i.e. 
the derived category of bounded complexes of coherent sheaves, and by $\text{Perf}\,(X)$ the subcategory of $D^b\text{Coh}\,(X)$ of perfect complexes. 
The functors used in the paper are derived; we sometimes drop $R$ or $L$ from notation, for example we write $f_*$ instead of $Rf_*$. Denote by $D_{\text{shvs}}(X)$ the category of complexes of constructible sheaves on a space $X$. 

For $G$ a reductive group, fix a maximal torus and a Borel subgroup $T\subset B\subset G$. 
We denote by $M$ the character lattice, by $N$ the cocharacter lattice, by $W$ the Weyl group, by $M_\mathbb{R}:=M\otimes_\mathbb{Z}\mathbb{R}$, and by \[\langle\,,\,\rangle:N\times M\to \mathbb{Z}\] the natural pairing; it induces a pairing between $N$ and $M_\mathbb{R}$. We assume that the weights in the Lie algebra of $B$ are negative roots. In particular, $B$ induces a choice of a dominant chamber $M_\mathbb{R}^+\subset M_\mathbb{R}$. Denote by $\rho$ half the sum of positive roots of $G$. For $\chi$ a dominant weight of $G$, denote by $\Gamma(\chi)$ the irreducible representation of $G$ of highest weight $\chi$.
By abuse of notation, for $V$ a representation of $G$, write
\[\langle \lambda, V\rangle:=\langle \lambda, \det V\rangle=
\Big\langle \lambda, \sum_{\beta\text{ wt of }V}\beta\Big\rangle.\]
We denote by $w\cdot \chi$ the usual $W$-action and by $w*\chi=w(\chi+\rho)-\rho$ the shifted $W$-action. We denote by $\mathfrak{g}$ the Lie algebra of $G$.

For a stack $\X$, we denote by $\HH^\cdot(\X)$ the singular cohomology with $\mathbb{Q}$ coefficients and by $\KK_\cdot(\X)$ the rational K-theory of $\X$.

\subsection{Semi-orthogonal decompositions and noncommutative resolutions}
In this Subsection, we recall some basic notions related to derived categories. References for this material are \cite{K1}, \cite{K3}. 

\subsubsection{}\label{adjoint}
Let $\mathcal{T}$ be a triangulated category, and let $\mathcal{A}_1,\cdots, \mathcal{A}_n\subset \mathcal{T}$ be triangulated subcategories. We say that there is a \textit{semi-orthogonal decomposition}
    \[\mathcal{T}=\langle \mathcal{A}_1,\cdots, \mathcal{A}_n \rangle\] if the following two conditions are satisfied:
    
    (i) $\text{Hom}(A_i, A_j)=0$ for all $A_i\in\mathcal{A}_i, A_j\in\mathcal{A}_j$ and $1\leq j<i\leq n$.
    
    (ii) the smallest triangulated subcategory of $\mathcal{T}$ containing all $\mathcal{A}_i$s is $\mathcal{T}$.

\subsubsection{}\label{rightfunctor}
Let $\mathcal{T}$ be a triangulated category, and let $\mathcal{A}$ be a subcategory. $\mathcal{A}$ is called \textit{right admissible} in $\mathcal{T}$ if there exists a semi-orthogonal decomposition $\mathcal{T}=\langle-, \mathcal{A}\rangle$. Equivalently, the inclusion functor $\mathcal{A}\hookrightarrow \mathcal{T}$ has a right adjoint.

\subsubsection{}
In this paper, we say that a triangulated category $\mathcal{D}$ is \textit{smooth} if it is admissible inside $D^b(Y)$ for a smooth DM stack $Y$. We say that $\mathcal{D}$ is \textit{proper} over a given variety $S$ if it is admissible in $D^b(T)$ for $T$ a proper DM stack over $S$. 

\subsubsection{}

Let $X$ be a variety. We say that a smooth and proper over $X$ triangulated category $\mathcal{D}$ is a \textit{noncommutative resolution of singularities (NCR)} if there exists an adjoint pair of functors
    \begin{align*}
    F&: \mathcal{D}\to D^b(X),\\ 
    G&: \text{Perf}\,(X)\to \mathcal{D}
    \end{align*} such that $FG=\text{id}_{\text{Perf}(X)}$.
The definition is slightly more general than the definition in \cite[Definition 3.1 and the paragraph after it]{K3}.

\textbf{Example.} Let $X$ be a variety with rational singularities and let $f:Y\to X$ be a resolution of singularities. Then $D^b(Y)$ is an NCR of $X$ where the corresponding functors are $f_*:D^b(Y)\to D^b(X)$ and $f^*:\text{Perf}\,(X)\to D^b(Y)$.

\subsection{Good moduli spaces}\label{goodmoduli}
\subsubsection{} Let $\X$ be a stack. An algebraic space $X$ with a morphism $\pi:\X\to X$ is called a \textit{good moduli space} if

(i) $\pi_*: \text{QCoh}\,(\X)\to\text{QCoh}\,(X)$ is exact,

(ii) the induced morphism $\OO_X\to \pi_*\OO_\X$ is an isomorphism.
\\

\textbf{Examples.} \begin{enumerate}\item 
    For $A$ an affine variety and $G$ a reductive group acting linearly on $A$, \[\pi:A/G\to A\sslash G=\text{Spec}\left(\OO_A^G\right)\] is a good moduli space.
    
\item 
Let $Y$ be a smooth projective variety, let $\beta\in H^\cdot (Y)$, and let $\mathcal{L}$ be an ample line bundle on $Y$. The moduli stack $\mathfrak{M}^{\text{ss}}_{\beta}$ of $\mathcal{L}$-Gieseker semistable sheaves with Chern character $\beta$ has a good moduli space $M^{\text{ss}}_{\beta}$.

\end{enumerate} 

For properties and further examples of good moduli space, see Alper \cite{A}. We assume that $X$ is a scheme in this paper. The following is \cite[Theorem $4.12$, Theorem $1.2$]{AHR}:

\begin{thm}\label{ahr}
Let $\X$ be a stack satisfying Assumption A. Let $x\in \X(\mathbb{C})$ with stabilizer group $G_x$ and normal bundle $N_x$. Then there exists an affine scheme $A$ with a linearizable action of $G_x$, a point $a\in A(\C)$ with stabilizer group $G_x$, and \'etale maps $e$ and $f$ such that the following squares are cartesian:
\begin{equation}\label{d1}
\begin{tikzcd}
(N_x/G_x, 0) \arrow[d] & (A/G_x, a) \arrow[r, "e"] \arrow[l, swap, "f"] \arrow[d]& (\X,x) \arrow[d]  \\
N_x\sslash G_x &A\sslash G_x \arrow[l, swap, "f"]
\arrow[r, "e"]& X.
\end{tikzcd}
\end{equation}
\end{thm}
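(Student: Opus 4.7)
The plan is to follow the standard strategy for local structure theorems of Luna-type: first build a formal-local model, then algebraize via an equivariant Artin approximation. By the hypothesis that $\pi$ is a good moduli space with affine diagonal, the stabilizer $G_x$ of the closed point $x$ is automatically reductive, and the normal bundle $N_x$ carries a linear $G_x$-action, so the candidate local model $[N_x/G_x]$ at least makes sense.

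First I would reduce to the affine case, replacing $X$ by an affine \'etale neighborhood of $\pi(x)$ and $\X$ by its pullback. Let $\widehat{R}$ denote the completion of $\OO_{X,\pi(x)}$ and set $\widehat{\X}:=\X\times_X\Spec\widehat{R}$. Using Tannaka duality for stacks with affine diagonal (Hall--Rydh), together with deformation theory controlled by $N_x$ and the vanishing of higher cohomology of coherent sheaves for the reductive group $G_x$, one identifies $\widehat{\X}$ with the formal completion of $[N_x/G_x]$ at the origin. Concretely, one proceeds order by order on the defining ideal of the closed point, lifting each successive deformation using that $H^i(BG_x, F)=0$ for $i>0$ and any coherent $G_x$-representation $F$ viewed as a sheaf on $BG_x$; this lets one construct a compatible inverse system of isomorphisms and hence a formal isomorphism $\widehat{\X}\isom\widehat{[N_x/G_x]}$.

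Next I would algebraize this formal isomorphism. The key ingredient is an equivariant version of Artin approximation: given the formal map just constructed, there exists a smooth affine $G_x$-scheme $A$, a fixed point $a\in A(\C)$, and $G_x$-equivariant \'etale maps $A\to N_x$ near $a\mapsto 0$ and $A\to \X$ (meaning an \'etale representable morphism $A/G_x\to \X$) near $a\mapsto x$, such that the induced maps on formal completions agree with the chosen formal isomorphism. This yields the maps $e$ and $f$ in the diagram, with $e$ containing $x$ in its image by construction. The cartesian property of the two squares then follows because good moduli spaces are compatible with \'etale base change on the target: since $A/G_x\to\X$ is representable and \'etale, its good moduli space $A\sslash G_x\to X$ is \'etale, and the square is cartesian; the same argument applies to $A/G_x\to N_x/G_x$.

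The main obstacle, and the step that truly requires the Alper--Hall--Rydh machinery rather than classical tools, is the equivariant Artin approximation: one must approximate a formal isomorphism by an actual map of finite-type schemes \emph{while preserving the $G_x$-action} and while maintaining \'etaleness of the induced map of stacks. Non-equivariant Artin approximation is not enough, and earlier slice theorems (Luna, Alper) required either characteristic zero in the strict slice setting or linearly reductive stabilizers with additional hypotheses. The upgrade to the form stated here relies on coherent completeness of $[N_x/G_x]$ over $\Spec\widehat{R}$ and a careful induction on infinitesimal neighborhoods to produce a compatible algebraization.
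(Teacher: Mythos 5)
The paper does not prove this statement; it is quoted verbatim as \cite[Theorem 4.12, Theorem 1.2]{AHR} and treated as a black box. So there is no ``paper's own proof'' to compare against --- you are in effect reconstructing the Alper--Hall--Rydh argument. Your two-step outline (formal local model via deformation theory and coherent completeness, then equivariant Artin algebraization) does capture the broad strategy of \cite{AHR}, and you are right that the equivariant algebraization is the essential novelty over classical slice theorems. Two small remarks on the setup: the map $f$ being \'etale onto $N_x/G_x$ forces $\X$ to be smooth at $x$ (your formal identification $\widehat{\X}\cong\widehat{[N_x/G_x]}$ also uses this), which is implicit in the statement; and your appeal to reductivity of $G_x$ presupposes $x$ is a closed point, which you do say.

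There is, however, a genuine gap in the last step. You claim the cartesian property of the two squares ``follows because good moduli spaces are compatible with \'etale base change on the target,'' concluding that $A\sslash G_x\to X$ is \'etale and the square is cartesian. This has the direction of base change backwards and is false as stated. Good moduli spaces are stable under flat (in particular \'etale) base change along a map of \emph{moduli spaces} $X'\to X$; but what you produce is an \'etale map on the \emph{stack} side, $A/G_x\to\X$, and in general the induced map of good moduli spaces is neither \'etale nor does it make the square cartesian. A standard example: the open immersion $[\mathbb{G}_m/\mathbb{G}_m]\hookrightarrow[\mathbb{A}^1/\mathbb{G}_m]$ is \'etale, both good moduli spaces are $\operatorname{Spec}\C$, and the resulting square is not cartesian. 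To obtain the conclusion one needs the map $A/G_x\to\X$ to be \emph{strongly \'etale} in Alper's sense --- it must send closed points to closed points and be stabilizer-preserving at closed points --- and arranging this requires shrinking $A$. This is Luna's fundamental lemma in the stacky setting (see \cite[Theorem 3.14]{A} and the corresponding arguments in \cite{AHR}), and it is one of the substantive steps of the proof, not a formal consequence of \'etaleness. The same point applies to the map $A/G_x\to N_x/G_x$.
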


We will be using the following corollary:

\begin{cor}\label{ahr2}
Let $\pi: \X\to X$ be a stack satisfying Assumption A. Let $p\in X(\mathbb{C})$. There exists a quotient stack $p: \Y:=V/G\to Y:=V\sslash G$ with $G$ a reductive group, $V$ a $G$-representation, and analytic open sets $p\in U\subset X$ and $0\in \mathcal{U}\subset Y$ such that the diagram 
\begin{equation}\label{d2}
\begin{tikzcd}
\pi^{-1}(U) \arrow[d, "\pi"'] & p^{-1}(\mathcal{U}) \arrow[l, swap, "\sim"] \arrow[d, "p"]\\
U & \mathcal{U}. \arrow[l, swap, "\sim"]
\end{tikzcd}
\end{equation}
commutes. In particular, the diagram \begin{equation}\label{d3}
\begin{tikzcd}
\widehat{\X_p} \arrow[d, "\pi"'] & \widehat{\Y_0} \arrow[l, swap, "\sim"] \arrow[d, "p"]\\
\widehat{X_p} & \widehat{Y_0}, \arrow[l, swap, "\sim"]
\end{tikzcd}
\end{equation}
where $\widehat{\X_p}$ is the formal completion of $\X$ along $\pi^{-1}(p)$, $\widehat{X_p}$ is formal completion of $X$ at $p$ etc.
\end{cor}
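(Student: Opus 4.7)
The plan is to reduce directly to Theorem \ref{ahr} by passing from the étale to the analytic topology. First, I would choose the point $x \in \X$ at which to invoke the theorem: since $\pi$ is a good moduli space, the fiber $\pi^{-1}(p)$ contains a unique closed point, and I take $x$ to be it. Then $G_x$ is reductive and Theorem \ref{ahr} produces diagram \eqref{d1}; set $G := G_x$, $V := N_x$, $\Y := V/G$, and $Y := V\sslash G$.

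Next, I would pass from étale to analytic on the coarse-space level. The bottom row of \eqref{d1},
\[N_x\sslash G \xleftarrow{\,f\,} A\sslash G \xrightarrow{\,e\,} X,\]
consists of étale morphisms of schemes sending $a \mapsto 0$ and $a \mapsto p$. Since an étale morphism of schemes is a local biholomorphism in the analytic topology, I can pick an analytic open neighborhood $W \ni a$ in $A\sslash G$ small enough that both $e|_W$ and $f|_W$ are open analytic embeddings; setting $U := e(W)$ and $\mathcal{U} := f(W)$ then yields analytic opens with $U \cong W \cong \mathcal{U}$.

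Finally, I would lift this to the stack level using the cartesianness of the two top squares of \eqref{d1}: pulling the identifications $\X \times_X A\sslash G \cong A/G \cong \Y \times_Y A\sslash G$ back along the analytic open $W \hookrightarrow A\sslash G$ yields
\[\pi^{-1}(U) \;\cong\; (A/G) \times_{A\sslash G} W \;\cong\; p^{-1}(\mathcal{U}),\]
compatibly with the projections to $U$ and $\mathcal{U}$, which is diagram \eqref{d2}. Diagram \eqref{d3} on formal completions is then immediate: étale morphisms of schemes induce isomorphisms of complete local rings at matching closed points, and the stacky statement descends along the étale atlas $A/G \to \X$. The only substantive input is Theorem \ref{ahr} itself; the one thing to verify is that $W$ can be chosen so that both $e|_W$ and $f|_W$ are simultaneously analytic embeddings, which follows because the locus where an étale morphism is a local analytic isomorphism is open and contains every point.
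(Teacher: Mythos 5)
Your proof is correct, and since the paper states this corollary without giving an explicit proof, your argument fills in exactly what is left implicit: apply Theorem \ref{ahr} at the unique closed point $x$ of $\pi^{-1}(p)$ (where $G_x$ is linearly reductive), use that complex \'etale morphisms of schemes are local biholomorphisms to shrink the coarse-space row to a common analytic open $W$ over which both $e$ and $f$ are open embeddings, and then pull back the two cartesian squares of \eqref{d1} along $W\hookrightarrow A\sslash G_x$ to identify $\pi^{-1}(U)\cong p^{-1}(\mathcal{U})$; the formal-completion statement \eqref{d3} follows because \'etale maps induce isomorphisms on completions at matching closed points and this passes through the cartesian squares. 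This is the intended route; the only slight overstatement in your write-up is calling $A/G\to\X$ an \'etale atlas (it is an \'etale neighborhood of $x$, not a cover of $\X$), but that does not affect the argument since everything is localized near $p$.
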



\subsection{Theta-stratifications}

References for this Section are \cite[Section 1]{HL}, \cite[Section 2.1]{HL2}, \cite{AHLH}.

\subsubsection{}\label{theta} Let $\X$ be an algebraic stack of finite type over $\C$. Let $\Theta=\mathbb{A}^1/\mathbb{G}_m$. The stacks $\underline{\text{Map}}(B\mathbb{G}_m,\X)$ and 
$\underline{\text{Map}}(\Theta,\X)$ are algebraic stacks with natural (evaluation) maps to $\X$. Their connected components are called \textit{fixed stacks} and $\Theta$\textit{-stacks} or \textit{attracting stacks}, respectively. 
There is also a natural map \[\underline{\text{Map}}(\Theta,\X)\to \underline{\text{Map}}(B\mathbb{G}_m,\X).\] 
A $\Theta$-stack $\mathcal{S}$ has an associated fixed stack $\mathcal{Z}$, and fits in a diagram
\begin{equation*}
    \begin{tikzcd}
    \mathcal{S}\arrow[d,"q"] \arrow[r,"p"]& \mathcal{X}\\
    \mathcal{Z}& 
    \end{tikzcd}
\end{equation*}
where $p$ is proper and $q$ is an affine bundle map. If $p$ is an immersion, we say that $\mathcal{S}$ is a $\Theta$\textit{-stratum}.

When $\X=V/G$ for $V$ an representation a reductive group $G$, the fixed and $\Theta$-stacks are of the form
\begin{equation}\label{d4}
    \begin{tikzcd}
    V^{\lambda\geq 0}\slash G^{\lambda\geq 0}\arrow[d,"q"] \arrow[r,"p"]& V\slash G\\
    V^\lambda\slash G^\lambda& 
    \end{tikzcd}
\end{equation}
where $\lambda:\C^*\to G$ is a cocharacter, $G^\lambda$ is the Levi group associated to $\lambda$, $G^{\lambda\geq 0}$ is the parabolic group associated to $\lambda$, $V^\lambda\subset V$ is the $\lambda$-fixed locus and $V^{\lambda\geq 0}\subset V$ is the $\lambda$-attracting locus. Such a $\Theta$-stack is a $\Theta$-stratum if the map $p$ is a closed immersion, so if it is a Kempf--Ness locus in the terminology of \cite[Section 2.1]{HL2}.





\subsection{Noncommutative motives}\label{ncmotives}

We briefly explain the definition of noncommutative motives. A general reference is \cite{T}. 
Denote by $\textbf{dgcat}$ the category of (small) dg categories over $\mathbb{C}$. It has a Quillen model structure whose weak equivalences are derived Morita equivalences. Denote by $\textbf{Hmo}$ the corresponding homotopy category. 
The universal category through which all additive invariants factor (examples include cyclic homology, K-theory, and related constructions) is a smaller (additive) category $\textbf{Hmo}_0$ with objects dg categories and morphisms \[\text{Hom}_{\textbf{Hmo}_0}(\mathcal{A}, \mathcal{B})=\KK_0\left(\text{rep}(\mathcal{A}, \mathcal{B})\right)\]
where $\mathcal{A}$ and $\mathcal{B}$ are dg categories and $\text{rep}(\mathcal{A}, \mathcal{B})\subset D^b(\mathcal{A}^{\text{op}}\otimes\mathcal{B})$ is the full subcategory of bimodules $X$ such that $X(a,-)\in \text{Perf}\,(\mathcal{B})$ for any object $a\in\mathcal{A}$. Consider the functor $\mathcal{U}: \textbf{dgcat}\to \textbf{Hmo}_0$.


We consider the category $\textbf{Hmo}_{0;\mathbb{Q}}^{\natural}$, the idempotent completion of the $\mathbb{Q}$-linearization of 
$\textbf{Hmo}_0$. We call its elements (by a slight abuse) noncommutative motives; the original definition considers the subcategory of $\textbf{Hmo}_{0;\mathbb{Q}}^{\natural}$ generated by proper and smooth dg categories, but in our case we need to allow proper and smooth categories over (not necessarily proper) $X$.

\subsection{A preliminary result}

The following type of result in used by \v{S}penko--Van den Bergh in their construction of NCRs. 

Let $A$ be a smooth affine variety with an action of a reductive group $G$ and let $\X=A/G$. For a locally free sheaf $\mathcal{F}$ on $\X$, its stalk at the origin is a representation $\Gamma$ of $G$. We call $\Gamma$ \textit{the associated representation of} $\mathcal{F}$.

We state the following result for future reference, the proof is same as \cite[Section 3.2]{HLS} and it uses an explicit K\"{o}szul resolution for pushforward along the map $A^{\lambda\geq 0}/G^{\lambda\geq 0}\hookrightarrow A/G^{\lambda\geq 0}$ and the Borel-Bott-Weil Theorem for the map $A/G^{\lambda\geq 0}\to A/G$.

\begin{prop}\label{bbw}
Let $\lambda$ be a cocharacter of $G$ and consider the diagram of attracting loci
\[A^\lambda/G^\lambda\xleftarrow{q}
A^{\lambda\geq 0}/G^{\lambda\geq 0}\xrightarrow{p} A/G.\]
Let $\F$ be a locally free sheaf on $A^\lambda/G^\lambda$ with associated representation $\Gamma(\chi)$ where $\chi$ is a dominant weight of $G$. 
Then there is a complex
\[\left(\bigoplus_{I}\mathcal{F}_I[|I|-\ell(w)], d\right)\to p_*q^*\mathcal{F},\]
where the terms of the complex correspond to subsets $I\subset \{\beta|\,\langle \lambda, \beta\rangle<0\}$, $\mathcal{F}_I$ is a locally free sheaf with associated representation \[\Gamma\left((\chi-\sigma_I)^+\right),\] where
$\sigma_I=\sum_{\beta\in I} \beta$,
$(\chi-\sigma_I)^+$ is the dominant Weyl-shifted conjugate of $\chi-\sigma_I$ if it exists, and zero otherwise,
and
$w$ is the element of the Weyl group such that $w*(\chi-\sigma_I)$ is dominant or zero of length $\ell(w)$.
\end{prop}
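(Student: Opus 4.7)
The plan is to factor the map $p$ as
\[A^{\lambda\geq 0}/G^{\lambda\geq 0} \xrightarrow{\,i\,} A/G^{\lambda\geq 0} \xrightarrow{\,\pi\,} A/G,\]
where $i$ is the $G^{\lambda\geq 0}$-equivariant closed immersion of the $\lambda$-attracting locus into $A$ and $\pi$ is the proper smooth map with fiber the partial flag variety $G/G^{\lambda\geq 0}$, and then to handle $i_*$ by a Koszul resolution and $\pi_*$ by Borel--Bott--Weil. Since $q$ is an affine bundle and associated representations are detected at a $\lambda$-fixed point, $q^*\F$ is a locally free sheaf on $A^{\lambda\geq 0}/G^{\lambda\geq 0}$ whose associated $G^{\lambda\geq 0}$-representation is $\Gamma(\chi)$, inflated along $G^{\lambda\geq 0}\twoheadrightarrow G^\lambda$.

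For the first step, I would use that the conormal bundle of $A^{\lambda\geq 0}\hookrightarrow A$ is a $G^{\lambda\geq 0}$-equivariant vector bundle whose weights are precisely $\{\beta : \langle\lambda,\beta\rangle<0\}$. The Koszul complex then gives a locally free resolution of $i_*q^*\F$ on $A/G^{\lambda\geq 0}$ whose term in cohomological degree $|I|$ is the direct sum, over subsets $I$ of those weights, of locally free sheaves with associated $G^{\lambda\geq 0}$-representation of character $\chi-\sigma_I$. Applying $\pi_*$ term by term, and using that $\pi$ realizes $A/G^{\lambda\geq 0}$ as a $G/G^{\lambda\geq 0}$-bundle over $A/G$, the derived pushforward of such a sheaf is computed fiberwise by the cohomology of the homogeneous bundle on $G/G^{\lambda\geq 0}$ associated to $\chi-\sigma_I$. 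Borel--Bott--Weil identifies this with the sheaf associated to $\Gamma\bigl((\chi-\sigma_I)^+\bigr)$ concentrated in degree $\ell(w)$, for the unique Weyl element $w$ such that $w*(\chi-\sigma_I)$ is dominant (and with zero otherwise). Combining the homological shifts from Koszul and from BBW yields the terms $\F_I[|I|-\ell(w)]$, and the differential is induced from the Koszul differential.

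The main technical point will be to check that the fiberwise Borel--Bott--Weil computation assembles coherently with the Koszul differentials into a global complex on $A/G$ together with a map to $p_* q^*\F$. This is essentially routine because $\pi$ is smooth and proper and $\pi_*$ commutes with the associated-bundle construction, so one can work equivariantly throughout; the overall argument is a direct adaptation of \cite[Section 3.2]{HLS} to the present setup.
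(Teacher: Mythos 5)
Your factorization $p=\pi\circ i$ through $A/G^{\lambda\geq 0}$, with a Koszul resolution handling $i_*$ and Borel--Bott--Weil handling $\pi_*$, is exactly the paper's approach (the paper cites \cite[Section 3.2]{HLS}). One small slip: the set $\{\beta:\langle\lambda,\beta\rangle<0\}$ consists of the weights of the \emph{normal} bundle $N$ of $A^{\lambda\geq 0}$ in $A$, not the conormal bundle; the conormal bundle $N^\vee$ has the negated weights, so $\Lambda^{|I|}N^\vee$ twists by $-\sigma_I$ and your final character $\chi-\sigma_I$ is consistent only once this is corrected.
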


\section{Noncommutative resolutions of quotient singularities}\label{ncr}

In this Section, we prove Theorem \ref{thm1}. The definition of the categories $\mathbb{D}_\ell(\X)$ are for symmetric stacks satisfying Assumption A. In order to obtain NCRs, we need to assume that $\X$ satisfies Assumption C.
Recall the construction of category $\mathbb{D}(\X)$ and the strategy of proof discussed in Subsection \ref{assum}. 
\medskip 

\subsection{Local case} 
Let $\X=V/G$ where $V$ is a symmetric $G$-representation. Denote by $X=V\sslash G$ and by $p:\X\to X$. We will use the notations from Subsection \ref{conven}.
For a cocharacter $\lambda:\mathbb{C}^*\to G$, recall the diagram of attracting loci \eqref{d3} and define 
\[n_{\lambda}:=\langle \lambda, V^{\lambda>0}\rangle-\langle \lambda, \mathfrak{g}^{\lambda>0}\rangle=\langle \lambda, \det\mathbb{L}_{p}\rangle=\langle \lambda, \det\mathbb{L}_{\X}^{\lambda>0}\rangle.\]
Let $\delta\in M_{\mathbb{R}}^W$, and
let $\mathbb{D}_\delta(\X)$ be the full subcategory of $D^b(\X)$ generated by complexes $\mathcal{F}$ such that for any cocharacter $\lambda:\mathbb{C}^*\to G$:
\[-\frac{n_{\lambda}}{2}+\langle \lambda, \delta\rangle\leq \langle \lambda, \mathcal{F}|_0\rangle \leq \frac{n_{\lambda}}{2}+\langle \lambda, \delta\rangle.\] 
Let $\mathbb{W}\subset M_\mathbb{R}$ be the polytope 
\begin{equation}\label{defW}
    \mathbb{W}:=\text{sum}\,[0,\beta]\subset M_{\mathbb{R}},
\end{equation}
where the Minkowski sum is taken over all weights $\beta$ of $V$. 
The category $\mathbb{D}_\delta(\X)$ can be also described as the full subcategory of $D^b(\X)$ generated by vector bundles $\OO_{\X}\otimes \Gamma(\chi)$ where $\chi$ is a dominant weight of $G$ such that
\[\chi+\rho+\delta\in \frac{1}{2}\mathbb{W},\]
where the sum is taken over all weights $\beta$ of $V$.

Define $\mathbb{A}_\delta$ as the subcategory of $D^b(\X)$ generated by complexes $p_{\lambda*}q_\lambda^*(\mathcal{E})$ where $\mathcal{E}$ is a complex in $D^b(\mathcal{X}^\lambda)$ with 
\[\langle \lambda, \mathcal{E}|_0\rangle<-\frac{n_\lambda}{2}+\langle\lambda, \delta\rangle.\]
The following was proved by \v{S}penko--Van den Bergh \cite[Proposition 8.4]{SvdB} (the semi-orthogonal decomposition in loc. cit. holds for quotient stacks satisfying Assumption A, the condition that $X$ has a $T$-fixed stable point in loc. cit. is necessary to identify the summands with NCRs): 

\begin{thm}\label{local}
There exists a semi-orthogonal decomposition 
\[D^b(\X)=\langle \mathbb{A}_\delta, \mathbb{D}_\delta\rangle.\]
The semi-orthogonal decomposition holds relative to $X$ in the following sense: if $\mathcal{A}\in \mathbb{A}_\delta$ and $\mathcal{D}\in \mathbb{D}_\delta$, then \[Rp_*\left(R\mathcal{H}om_{\X}(\mathcal{D}, \mathcal{A})\right)=0.\]
\end{thm}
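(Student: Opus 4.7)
The plan is to verify separately (i) the semi-orthogonality $\mathrm{RHom}_\X(\mathcal{D}, \mathcal{A}) = 0$ for $\mathcal{D} \in \mathbb{D}_\delta$ and $\mathcal{A} \in \mathbb{A}_\delta$, and (ii) joint generation of $D^b(\X)$ by the two subcategories. The relative version of semi-orthogonality will follow from the same $\lambda$-weight computation after applying $Rp_*$. It suffices to check both on the explicit generators.

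For semi-orthogonality, I take $\mathcal{D} = \OO_\X \tensor \Gamma(\chi)$ with $\chi + \rho + \delta \in \frac{1}{2}\mathbb{W}$ and $\mathcal{A} = p_{\lambda *}q_\lambda^*\mathcal{E}$ where $\mathcal{E}$ has $\lambda$-weights $< -n_\lambda/2 + \langle\lambda,\delta\rangle$. The map $p_\lambda$ factors as a closed immersion $V^{\lambda\geq 0}/G^{\lambda\geq 0} \hookrightarrow V/G^{\lambda\geq 0}$ followed by the proper smooth parabolic quotient $V/G^{\lambda\geq 0}\to V/G$, so all adjoints and base changes are available. Using the $(p_\lambda^*, p_{\lambda *})$ and $(q_\lambda^*, q_{\lambda *})$ adjunctions together with the projection formula,
\[
\mathrm{RHom}_\X(\mathcal{D}, \mathcal{A}) \isom \mathrm{RHom}_{\X^\lambda}\bigl(q_{\lambda *}\, p_\lambda^!\,\mathcal{D},\,\mathcal{E}\bigr).
\]
The functor $p_\lambda^!$ twists by the determinant of the conormal complex of $p_\lambda$; a direct computation, using the symmetry of $V$ to equate $\langle\lambda, V^{\lambda<0}\rangle = -\langle\lambda, V^{\lambda>0}\rangle$ and the analogous statement for $\mathfrak{g}$, shows this shifts $\lambda$-weights by exactly $n_\lambda$. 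The affine bundle pushforward $q_{\lambda *}$ contributes only non-negative $\lambda$-weights (via $\mathrm{Sym}^\bullet$ of the positive-weight fibre). Hence $q_{\lambda *}\, p_\lambda^!\,\mathcal{D}$ is concentrated in $\lambda$-weight $\geq -n_\lambda/2 + \langle\lambda,\delta\rangle$, while $\mathcal{E}$ sits in strictly smaller $\lambda$-weight. Since the $\C^*$-action on $\X^\lambda$ is trivial, $\mathrm{RHom}_{\X^\lambda}$ splits as a direct sum over $\lambda$-weight spaces and the pairing vanishes. The same argument applied inside $Rp_*$ gives the relative statement.

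For generation, I must show every $\OO_\X \tensor \Gamma(\chi)$ lies in $\langle \mathbb{A}_\delta, \mathbb{D}_\delta\rangle$. If $\chi + \rho + \delta \in \frac{1}{2}\mathbb{W}$ this is tautological. Otherwise, I pick a cocharacter $\lambda$ for which $\chi$ is maximally outside the window on the $\lambda$-side, and apply Proposition \ref{bbw} to $\F = \OO_{\X^\lambda} \tensor \Gamma(\chi)$. The output is a complex on $\X$ converging to $p_{\lambda *} q_\lambda^*\F$, a member of $\mathbb{A}_\delta$ by the choice of $\lambda$, whose remaining terms are vector bundles $\F_I$ with associated weights the dominant Weyl-shifted conjugates $(\chi - \sigma_I)^+$. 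The key combinatorial point is that these shifts strictly decrease a suitable $W$-invariant potential measuring distance of $\chi$ from the window $\frac{1}{2}\mathbb{W} - \rho - \delta$; induction on this potential then places every vector bundle in $\langle \mathbb{A}_\delta, \mathbb{D}_\delta\rangle$.

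The main obstacle is the inductive step in generation: choosing the right cocharacter $\lambda$ and constructing a well-founded potential that strictly decreases under the Borel--Bott--Weil surgery of Proposition \ref{bbw}. This relies crucially on the symmetry of $V$, which is precisely what forces the polytope $\frac{1}{2}\mathbb{W} - \rho - \delta$ to be large enough to absorb the Weyl-shifted translates in finitely many steps. Semi-orthogonality, by contrast, is essentially formal $\lambda$-weight bookkeeping once the adjoint computation is in place.
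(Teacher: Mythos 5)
Two remarks before the critique: the paper does not prove this theorem itself, but cites \v{S}penko--Van den Bergh \cite[Proposition 8.4]{SvdB} directly. Your high-level strategy --- orthogonality via a weight computation on attracting loci, and generation via Borel--Bott--Weil induction --- is indeed the shape of the SvdB argument. However, both halves of your sketch contain genuine gaps.

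For semi-orthogonality, the isomorphism $\mathrm{RHom}_\X(\mathcal{D}, \mathcal{A}) \isom \mathrm{RHom}_{\X^\lambda}(q_{\lambda *}\, p_\lambda^!\,\mathcal{D},\,\mathcal{E})$ is not what the cited adjunctions give. The adjunction $(p_\lambda^*, p_{\lambda*})$ yields $\mathrm{RHom}_\X(\mathcal{D}, p_{\lambda*}q_\lambda^*\mathcal{E}) \isom \mathrm{RHom}_{\X^{\lambda\geq 0}}(p_\lambda^*\mathcal{D}, q_\lambda^*\mathcal{E})$; to continue to $\X^\lambda$ one would need the \emph{left} adjoint $q_{\lambda!}$ of $q_\lambda^*$, not $q_{\lambda*}$, and the resulting twist is by $\omega_{q_\lambda}$ (of $\lambda$-weight $-n_\lambda$), not by $\omega_{p_\lambda}$, which has $\lambda$-weight $n_\lambda + 2\langle\lambda,\mathfrak{g}^{\lambda>0}\rangle$, not $n_\lambda$. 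More seriously, the claim that ``the affine bundle pushforward $q_{\lambda*}$ contributes only non-negative $\lambda$-weights'' is backwards: functions on a fibre of positive $\lambda$-weight form $\mathrm{Sym}^\bullet$ of the dual, which has non-positive $\lambda$-weights (test this on $\mathbb{A}^1/\mathbb{G}_m \to B\mathbb{G}_m$). With the correct signs, $q_{\lambda!}p_\lambda^*\mathcal{D}$ has $\lambda$-weights bounded above by $-n_\lambda/2+\langle\lambda,\delta\rangle$, while $\mathcal{E}$ has weights strictly below the same bound --- these ranges overlap, so the weight-separation argument on $\X^\lambda$ does \emph{not} close. The vanishing must instead be read off on $\X^{\lambda\geq 0}$: there $p_\lambda^*\mathcal{D}$ restricts to $\X^\lambda$ with $\lambda$-weights $\geq -n_\lambda/2+\langle\lambda,\delta\rangle$, $q_\lambda^*\mathcal{E}$ restricts with weights strictly below, and since $\OO_{\X^{\lambda\geq 0}}$ has non-positive $\lambda$-weights, $\mathrm{RHom}_{\X^{\lambda\geq 0}}$ between a ``high'' and a ``low'' object vanishes (the argument in \cite[Section 3]{HL2}). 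Your conclusion is right, but the route you describe does not reach it.

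For generation, the sketch names but does not construct the key inductive device: ``a suitable $W$-invariant potential measuring distance of $\chi$ from the window'' that strictly decreases under the Borel--Bott--Weil replacement $\chi \rightsquigarrow (\chi-\sigma_I)^+$. You flag this yourself as ``the main obstacle,'' and it is precisely the nontrivial combinatorial core of \cite{SvdB} (where the symmetry of $V$ enters through the polytope $\frac{1}{2}\mathbb{W}$). Without specifying the potential, the choice of the cocharacter $\lambda$ that places $p_{\lambda*}q_\lambda^*\mathcal{F}$ in $\mathbb{A}_\delta$ while simultaneously forcing all the other Borel--Bott--Weil terms $\mathcal{F}_I$ to move strictly closer, and without ruling out cycling, this is a heuristic rather than a proof. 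Since the paper outsources this entire argument to \cite[Proposition 8.4]{SvdB}, the gap here is not merely a stylistic one --- it is the substance of the theorem.
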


\subsection{Global case} Assume that $\pi: \X\to X$ is a symmetric stack satisfying Assumption B. Let $\ell\in \text{Pic}(\X)_{\mathbb{R}}$. 
Recall the definition of $\D_\ell(\X)$ from Subsection \ref{assum}. 
As in Step $1$, define $\mathbb{A}_\ell$ as the subcategory of $D^b(\X)$ generated by complexes of sheaves $p_*q^*\left(\mathcal{E}\right)$, where $\mathcal{S}$ is a $\Theta$-stack, $\mathcal{Z}$ is its associated fixed stack with maps
$\mathcal{Z}\xleftarrow{q}\mathcal{S}\xrightarrow{p}\X,$
and $\mathcal{E}$ satisfies \[\text{wt}\,\lambda^*(\mathcal{E})<-\frac{n_{\lambda}}{2}+\langle\lambda, \delta\rangle.\]

\begin{thm}\label{symsod}
There is a semi-orthogonal decomposition
\[D^b(\X)=\langle \mathbb{A}_\ell, \mathbb{D}_\ell\rangle.\]
The semi-orthogonal decomposition holds relative to $X$ in the following sense: if $\mathcal{A}\in \mathbb{A}_\ell$ and $\mathcal{D}\in \mathbb{D}_\ell$, then \begin{equation}\label{ad}
    R\pi_*\left(R\mathcal{H}om_{\X}(\mathcal{D}, \mathcal{A})\right)=0.\end{equation}
\end{thm}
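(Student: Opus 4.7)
The strategy is to reduce the global statement to the local case of Theorem \ref{local} by means of the \'etale local structure of $\X$ near points of $X$, and to establish generation via a global window-shift argument along a $\Theta$-stratification.

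For the Hom-vanishing \eqref{ad}, note that the formation of $R\pi_*$ commutes with flat base change, so the claim $R\pi_*\bigl(R\lHom(\mathcal{D}, \mathcal{A})\bigr) = 0$ can be checked \'etale locally on $X$. By Corollary \ref{ahr2}, each closed point $p \in X(\C)$ admits an \'etale neighborhood $A \sslash G \to X$ with $A$ a smooth affine $G$-variety, over which $\X$ pulls back to $A/G$. Any map $\lambda: B\C^* \to A/G$ is classified, up to conjugation, by a cocharacter of $G$ together with a $\lambda$-fixed point of $A$; the numerical invariant $n_\lambda$ reduces to the local expression in terms of $\det \mathbb{L}^{\lambda>0}$, and $\ell$ restricts to a $W$-invariant character producing a $\delta \in M_\mathbb{R}^W$. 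Hence the pullbacks of $\mathcal{D}$ and $\mathcal{A}$ lie in $\mathbb{D}_\delta(A/G)$ and $\mathbb{A}_\delta(A/G)$ respectively, and the relative vanishing in Theorem \ref{local} concludes this step.

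For generation, I would argue that $\mathbb{A}_\ell$ and $\mathbb{D}_\ell$ together generate $D^b(\X)$ using the Kirwan--Edidin--Rydh \cite{ER} stratification of $\X$ into $\Theta$-strata. Given $\mathcal{F} \in D^b(\X)$, if some weight bound defining $\mathbb{D}_\ell$ fails along a cocharacter $\lambda: B\C^* \to \X$, then the failure is supported on the corresponding $\Theta$-stratum $\mathcal{S}$, and one can form an exact triangle whose third term is a pushforward $p_*q^*\mathcal{E}$ from $\mathcal{S}$ and whose modification strictly shrinks the weight range along $\lambda$; the cone lies in $\mathbb{A}_\ell$ by construction. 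This is the global counterpart of the window-shift functors of Halpern-Leistner \cite{HL}, \cite{HL2} and Ballard--Favero--Katzarkov \cite{BFK}. Iterating along the stratification, ordered by the Hilbert--Mumford numerical invariant, the procedure terminates and produces the required semi-orthogonal decomposition.

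The principal obstacle is to verify that the global Kirwan--Edidin--Rydh stratification is compatible with the local $\Theta$-stratifications appearing in Theorem \ref{local}, so that the window-shift modifications glue into a consistent global procedure. Compatibility follows from the identification made above of the local $\delta$ with the restriction of $\ell$, together with the fact that the attracting/fixed loci in $A/G$ recover, via the \'etale cover, the connected components of $\underline{\text{Map}}(\Theta, \X)$ and $\underline{\text{Map}}(B\mathbb{G}_m, \X)$ introduced in Subsection \ref{theta}. The second subtlety is termination: the induction must be well-founded on a stratification whose strata are linearly ordered so that each window shift strictly decreases a fixed invariant, and this is precisely what the VGIT techniques invoked in the excerpt are designed to provide. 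These two points account for the main work of the proof.
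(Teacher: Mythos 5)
Your Hom-vanishing argument follows the paper's strategy: reduce to the local model via the Alper--Hall--Rydh slice theorem (the paper uses formal completions $\widehat{\X_p}$, you use \'etale covers, both fine for checking vanishing of $R\pi_*$) and invoke the relative vanishing of Theorem \ref{local}. The observations that $n_\lambda$ restricts to $\langle\lambda, \det\mathbb{L}^{\lambda>0}\rangle$ and $\ell$ restricts to $\delta\in M_\mathbb{R}^W$ are exactly the compatibilities needed.

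The generation step has a genuine gap. You invoke a $\Theta$-stratification of $\X$ and claim that a weight-bound failure along $\lambda$ is ``supported on the corresponding $\Theta$-stratum $\mathcal{S}$,'' so that one can correct it by a triangle and iterate along a Hilbert--Mumford ordering. But for a stack $\X$ with a good moduli space, the attracting stacks $\mathcal{S}$ are \emph{not} substacks: the map $p:\mathcal{S}\to\X$ is proper but not a closed immersion, so $\mathcal{S}$ does not cut out a locus of $\X$ on which the failure can be localized, and $\X$ is not $\Theta$-stratified in the VGIT sense (it is ``everywhere semistable''). The $\Theta$-strata appearing in the Edidin--Rydh construction live on the blown-up stacks $\X_i$, not on $\X$ itself, so that machinery is what underlies the \emph{admissibility} result in Subsection~\ref{ncr2}, not the present SOD. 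The window-shift picture you describe therefore cannot be set up here, and there is no visible well-founded invariant that your iteration strictly decreases. The paper instead treats generation the same way as Hom-vanishing: one observes that $\mathbb{A}_\ell$ and $\mathbb{D}_\ell$ generate the local categories $D^b_o(\X_U)$ by Theorem~\ref{local}, and then applies a local-to-global gluing argument as in \cite[Proposition 3.5.8]{SvdB}. Replacing your stratification argument with that local reduction plus gluing is the missing idea.
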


For $p\in X$, let $D^b_o\left(\widehat{\X_p}\right)$ be the split category generated by the restrictions of complexes in $D^b(\X)$. For $U\subset X$ as in Corollary \ref{ahr2}, let $\X_U:=\pi^{-1}(U)$ and define $D^b_o\left(\X_U\right)$ as the split category of coherent analytic sheaves generate by restrictions of complexes in $D^b(\X)$. For $\mathcal{Y}=V/G$, the category $D^b\left(\mathcal{Y}\right)$ is generated by $\mathcal{O}_\Y\otimes \Gamma(\chi)$ for $\chi$ a dominant weight of $G$, and thus
$D^b\left(\widehat{\mathcal{Y}_0}\right)$ is generated 
by $\mathcal{O}_{\widehat{\Y}}\otimes \Gamma(\chi)$. By Theorem \ref{ahr}, the categories  $D^b_o\left(\X_U\right)$ and $D^b_o\left(\widehat{\X_p}\right)$ are also generated by these vector bundles. 

Further, let $\widehat{\mathbb{D}_{\ell, p}}\subset D^b_o\left(\widehat{\X_p}\right)$ be the category generated by restrictions of sheaves in $\mathbb{D}_\ell$; we define $\widehat{\mathbb{A}_{\ell, p}}$, $\widehat{\mathbb{D}_{\ell, U}}$ etc. similarly. For the stack $\mathcal{Y}=V/G$ from Corollary \ref{ahr2}, let $\delta\in M(G)_{\mathbb{R}}$ be the restriction of $\ell$ and
we denote by $\mathbb{A}'_\delta$ and $\mathbb{D}'_\delta$ the categories from Theorem \ref{local}.
By Corollary \ref{ahr2} and using the notation from there, we have that \begin{align*}
    \mathbb{D}_{\ell, U}&\cong \mathbb{D}'_{\delta, \mathcal{U}},\\
    \mathbb{A}_{\ell, U}&\cong \mathbb{A}'_{\delta, \mathcal{U}}
\end{align*}
and the analogues equivalences for formal completions. 

\begin{proof}[Proof of Theorem \ref{symsod}]
We continue with the notations from the above.
Let $\mathcal{A}\in \mathbb{A}_\ell$ and $\mathcal{D}\in \mathbb{D}_\ell$. To show \eqref{ad}, it suffices to prove the statement after restriction to $\widehat{\X_p}$ for all $p\in X$. Then $\mathcal{A}\in \widehat{\mathbb{A}'_{\delta, 0}}$ and $\mathcal{D}\in \widehat{\mathbb{D}'_{\delta, 0}}$ and thus the claim follows from Theorem \ref{local}. To show that $\mathbb{A}_\ell$ and $\mathbb{D}_\ell$ generate $D^b(\X)$, observe that they generate the local categories $D^b_o\left(\X_U\right)$. The claim follows as in \cite[Proposition 3.5.8]{SvdB}.
\end{proof}

\subsection{NCR}\label{ncr2} 
We show that $\mathbb{D}_\ell(\X)$ is a smooth and proper over $X$ category. More precisely,  $\mathbb{D}(\X)$ is an admissible subcategory of the Kirwan resolution of $\X$ as constructed by Edidin--Rydh \cite{ER}. In loc. cit., the authors do not use the language of $\Theta$-strata, but their construction is natural and applied to quotient stacks recovers Kirwan's resolution of singularities \cite[pages 475-476]{Ki}. 
For a stack $\X$ satisfying Assumption B,
the Edidin--Rydh construction provides a sequence of stacks \[\X=:\X_0\xleftarrow{\pi_0}\X_1\hookleftarrow\X^{\text{ss}}_1\xleftarrow{\pi_1}\cdots\xleftarrow{\pi_n}\X_{n+1}\hookleftarrow \X^{\text{ss}}_{n+1}=:Y,\]
with the following properties:

(i) the stacks $\X_i$ have good moduli spaces $X_i$,

(ii) $\X_i^{\text{ss}}\subset \X_i$ is an open subset, complement to $\Theta$-strata,

(iii) \'etale locally on $X_k$, either $\X_k\xleftarrow{\pi_k}\X_{k+1}\hookleftarrow\X^{\text{ss}}_{k+1}$ are isomorphisms, or
there are neighborhoods as in Theorem \ref{ahr}: 
\begin{equation*}
    \begin{tikzcd}
    X_k&\X_k\arrow[l]&\X_{k+1}\arrow[l, "\pi_k"']& \X^{\text{ss}}_{k+1}\arrow[l, hook']\\
    A\sslash G\arrow[u]\arrow[d]&
    A/G\arrow[l]\arrow[u]\arrow[d]& \left(\text{Bl}_{A^0}A\right)/G\arrow[u]\arrow[l, "\pi"']\arrow[d]& \left(\text{Bl}_{A^0}A\right)^{\text{ss}}/G \arrow[l, hook']\arrow[u]\arrow[d]\\
   N\sslash G& N/G\arrow[l, "\pi"']&\left(\text{Bl}_{N^0}N\right)/G\arrow[l, "\pi"']& \left(\text{Bl}_{N^0}N\right)^{\text{ss}}/G. \arrow[l, hook']
    \end{tikzcd}
\end{equation*}
Here, $A^0\subset A$ is the $G$-fixed locus, the stability condition is given by the tautological line bundle $\OO(1)$ on $\text{Bl}_{A^0}A$, and $N$ is the normal bundle of $a$ in $A$.

(iv) The stack $Y:=\X^{\text{ss}}_{n+1}$ is a smooth Deligne--Mumford stack with a proper map $Y\to X$ of relative dimension zero.
\medskip

The class $\ell\in \text{Pic}(\X)_\mathbb{R}$ induces classes which we also denote by $\ell\in \text{Pic}(\X_k)_\mathbb{R}$ for $1\leq k\leq n+1$.

\begin{prop}\label{indsym}
Let $0\leq k\leq n$.
Assume that $\X_k$ is a symmetric stack. Then $\X^{\text{ss}}_{k+1}$ is also a symmetric stack. 
\end{prop}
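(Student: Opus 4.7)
My plan is to verify the symmetric property at each closed point of $\X^{\text{ss}}_{k+1}$, reducing \'etale-locally to the explicit quotient-stack model given by item (iii) of the Edidin--Rydh description above: $\X_k = N/G$ with $N$ a symmetric $G$-representation, and $\X^{\text{ss}}_{k+1} = (\text{Bl}_{N^G} N)^{\text{ss}}/G$. If $y \in \X^{\text{ss}}_{k+1}$ is a closed point lying outside the exceptional divisor, the blow-up map is a local isomorphism at $y$, so the slice representation at $y$ coincides with the slice representation at the corresponding point of $\X_k$ and is symmetric by hypothesis.

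The substance of the argument is the case where $y$ lies in the exceptional divisor. Writing $N = N^G \oplus N'$, such a $y$ corresponds to a closed $G$-orbit $G \cdot [v] \subset \PP(N')^{\text{ss}}$ with reductive stabilizer $H = G_{[v]}$; let $\ell_v \subset N'$ be the $H$-stable line, $\theta:H \to \C^*$ its character, and $W = N'/\ell_v$, so that $N'|_H = \ell_v \oplus W$ as $H$-representations. A direct computation in the ambient product $N' \times \PP(N')$ yields
\[T_y(\text{Bl}_{N^G} N) = N^G \oplus (\ell_v^* \otimes W) \oplus \ell_v\]
as an $H$-representation, and the slice representation at $y$ is $\Sigma_y = T_y(\text{Bl}_{N^G} N)/(\mathfrak{g}/\mathfrak{h})$. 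The quotient $\mathfrak{g}/\mathfrak{h}$ is a symmetric $H$-representation via the Killing-form self-dualities $\mathfrak{g} \cong \mathfrak{g}^*$ and $\mathfrak{h} \cong \mathfrak{h}^*$, so proving that $\Sigma_y$ is symmetric reduces to proving that $T_y(\text{Bl}_{N^G} N)$ is symmetric as an $H$-representation.

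The key input is the $G$-invariant non-degenerate bilinear form $B$ on $N'$ supplied by the self-duality $N' \cong (N')^*$. Using $B$ I would produce an $H$-equivariant short exact sequence
\[0 \to \ell_v^\perp/\ell_v \to W \to \ell_v^* \to 0,\]
which splits because $H$ is reductive, and whose middle term $\ell_v^\perp/\ell_v$ inherits a non-degenerate form and is therefore symmetric as $H$-representation. A character-level bookkeeping then reduces the symmetry of $T_y(\text{Bl}_{N^G} N)$ to the single identity $2\theta = 0$ in the character lattice of $H$. I expect verifying $2\theta = 0$ to be the main obstacle: in the non-isotropic case $B(v,v) \neq 0$ the form pairs $\ell_v$ with itself non-degenerately, so $\ell_v \cong \ell_v^*$ and $2\theta = 0$ follows at once; in the isotropic case $B(v,v) = 0$ one uses the polystability of $[v]$ via the Kempf--Ness/moment-map description of closed orbits to show that $\theta$ is trivial on the identity component $H^0$, so $\theta$ factors through the finite group $\pi_0(H)$, where a further moment-map balance yields $2\theta = 0$.
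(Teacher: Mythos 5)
Your overall plan — reduce to the Edidin--Rydh local model, split on whether $y$ lies in the exceptional divisor, and compute the slice representation there — is sound, and your formula $T_y(\text{Bl}_{N^G}N)\cong N^G\oplus(\ell_v^*\otimes W)\oplus\ell_v$ is correct. But the route you then take is both over-engineered and, at the decisive point, incomplete.

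The genuine gap is exactly where you flag it: the isotropic case. Everything reduces to showing that the character $\theta$ of $H=G_{[v]}$ on $\ell_v$ restricts trivially to a maximal torus $T_H\subset H$, and you leave the argument at the level of ``one uses polystability/Kempf--Ness/moment maps.'' That is the whole content of the proposition and it is not actually carried out. Moreover, the final clause about a ``further moment-map balance yielding $2\theta=0$ in $\pi_0(H)$'' is a red herring: symmetry is a condition on $T_H$-weights, and since $X^*(T_H)$ is torsion-free, $2\theta|_{T_H}=0$ is equivalent to $\theta|_{T_H}=0$; the component group plays no role.

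The paper's proof gets $\theta|_{T_H}=0$ in one stroke, without the bilinear form, without splitting into isotropic/non-isotropic cases, and with no moment maps. It analyzes the $\Theta$-strata (unstable loci) of $\text{Tot}_{\PP(N')}(\OO(-1))$: the $\lambda$-fixed components in the exceptional divisor are the $\PP(N'_a)$, and a weight computation (using $\dim N'_i=\dim N'_{-i}$) shows $\PP(N'_a)$ is destabilized by $\lambda$ when $a<0$ and by $\lambda^{-1}$ when $a>0$. Hence a semistable point fixed by $\lambda$ must lie over $\PP(N'_0)$, i.e.\ $\langle\lambda,\theta\rangle=0$. Running over all cocharacters of $H$ gives $\theta|_{T_H}=0$, and then the residual weights of the tangent space are those of $\bigoplus_{i\neq0}N'_i$, symmetric by hypothesis — the slice is symmetric after removing $\mathfrak g/\mathfrak h$ exactly as you argue. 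This is the numerical Hilbert--Mumford observation your moment-map sketch is implicitly reaching for, and it covers your two cases uniformly. Once $\theta|_{T_H}=0$ is in hand, the bilinear form $B$ is unnecessary even in the non-isotropic case: $N'$ restricted to $T_H$ is symmetric because it is symmetric for the ambient $T$, so $W=N'/\ell_v$ is symmetric and you are done. As a side remark, the $G$-invariant form on $N'$ you invoke is not automatic: ``symmetric'' is only a condition on $T$-weights, which for disconnected $G$ does not imply $N'\cong(N')^*$ as $G$-representations (it does imply $T$-self-duality, which would suffice for your SES, but this needs to be said).
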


\begin{proof}
It suffices to check the statement in the local case
\[V/G\xleftarrow{\pi} \text{Bl}_0V/G\hookleftarrow (\text{Bl}_0V)^{\text{ss}}/G,\]
where $G$ is a reductive group, $V$ is a symmetric $G$-representation with $0$ the only $G$-fixed point in $V$, and the linearization is given by the tautological line bundle $\mathcal{O}(1)$. 

We claim that the unstable loci of $\text{Bl}_0V\cong \text{Tot}_{\mathbb{P}(V)}\left(\OO(-1)\right)$ are determined by pairs \[(\lambda, \mathbb{P}(V_a))\] where $\lambda:\C^*\to G$, $V_a\subset V$ is the subspace on which $\lambda$ acts with weight $a$, and $a<0$. 
The GIT algorithm, see \cite[Section 2.1]{HL2}, eliminates pairs $(\lambda, Z)$ for $\lambda:\C^*\to G$, $Z$ is a $\lambda$-fixed component on $\text{Tot}_{\mathbb{P}(V)}\left(\OO(-1)\right)$, and  \[\text{wt}_{\lambda}\,\mathcal{O}(1)|_Z>0.\]
The fixed loci $Z$ are $\mathbb{P}(V_a)$ for $a\neq 0$ and $\text{Tot}_{\mathbb{P}(V_0)}\left(\OO(-1)\right)$. 
Further, we compute
\[\text{wt}_{\lambda}\mathcal{O}(1)|_{\mathbb{P}(V_a)}=\sum_{n\in\mathbb{Z}} (n-a)\dim V_n>0.\]
The representation $V$ is symmetric, so $\text{dim}\,V_i=\text{dim}\,V_{-i}$ for $i\in\mathbb{Z}$.
We thus have that 
\[0<\sum_{n\in\mathbb{Z}} n\cdot\dim V_n-a\cdot\dim V=-a\cdot\dim V,\] so indeed $a<0$.

Finally, let $\lambda:\C^*\to G$ be a cocharacter which fixes a point $v$ in 
$\text{Tot}_{\mathbb{P}(V)^{\text{ss}}}\left(\mathcal{O}(-1)\right)$. If it lies on a $\lambda$-fixed component $\mathbb{P}(V_a)$ for $a\neq 0$, it is part of an unstable locus for either $\lambda$ or $\lambda^{-1}$. Thus it lies on $\text{Tot}_{\mathbb{P}(V_0)}\left(\OO(-1)\right)$; the normal bundle is $\bigoplus_{i\in\mathbb{Z}-\{0\}}V_i$, which is symmetric.  
\end{proof}

\begin{prop}
Let $0\leq k\leq n$.
The category $\D_\ell(\X_k)$ is admissible in $\D_\ell\left(\X^{\text{ss}}_{k+1}\right)$.
\end{prop}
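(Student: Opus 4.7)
The plan is to check admissibility étale locally using property (iii) of the Edidin--Rydh construction, reducing to the model
\[V/G \xleftarrow{\pi} \text{Bl}_0 V/G \hookleftarrow (\text{Bl}_0 V)^{\text{ss}}/G,\]
where $V$ is a symmetric $G$-representation with unique $G$-fixed point $0$ and the linearization is the tautological $\mathcal{O}(1)$. The embedding will be realized as the composition $j^* \circ \pi^*$, with $j$ the open immersion of the semistable locus.

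First, I would verify that $j^*\pi^*$ sends the generating vector bundles $\mathcal{O}_{V/G}\otimes \Gamma(\chi)$ of $\D_\ell(V/G)$ into $\D_\ell((\text{Bl}_0 V)^{\text{ss}}/G)$. For a cocharacter $\lambda:B\mathbb{C}^*\to (\text{Bl}_0 V)^{\text{ss}}/G$, the computation in the proof of Proposition \ref{indsym} identifies its fixed component with $\text{Tot}_{\mathbb{P}(V^\lambda)}\mathcal{O}(-1)$ and its normal bundle with $\bigoplus_{i\neq 0}V_i = V^{\lambda\neq 0}$. Consequently, $n_\lambda$ on the blowup matches $n_\lambda$ on $V/G$ for the same cocharacter $\lambda$ of $G$, so the inequalities defining $\D_\ell(V/G)$ imply those defining $\D_\ell((\text{Bl}_0 V)^{\text{ss}}/G)$ on the generators.

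Next, I would produce the admissibility statement by invoking the VGIT window theorem of \cite{BFK}, \cite{HL}, \cite{HL2} applied to the chamber change on $\text{Bl}_0 V/G$ between the trivial linearization (semistable locus contracts to $V/G$ via $\pi$) and the tautological $\mathcal{O}(1)$ (semistable locus is $(\text{Bl}_0 V)^{\text{ss}}/G$). This furnishes a window subcategory $\mathcal{G}_\ell \subset D^b(\text{Bl}_0 V/G)$, cut out by weight bounds on the $\lambda$-fixed components $\mathbb{P}(V_a)$, such that $\pi^*:\D_\ell(V/G)\xrightarrow{\sim}\mathcal{G}_\ell$ is an equivalence and $j^*:\mathcal{G}_\ell \hookrightarrow \D_\ell((\text{Bl}_0 V)^{\text{ss}}/G)$ is an admissible embedding. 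Combining with Step 1 yields the admissible embedding $\D_\ell(V/G)\hookrightarrow \D_\ell((\text{Bl}_0 V)^{\text{ss}}/G)$, whose semi-orthogonal complement is generated by complexes pushed forward from the $\Theta$-strata newly introduced on $(\text{Bl}_0 V)^{\text{ss}}/G$, fitting the format of Theorem \ref{symsod}.

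The main technical obstacle is calibrating $\mathcal{G}_\ell$ so that the $\pi^*$-equivalence with $\D_\ell(V/G)$ and the admissibility of $j^*$ into $\D_\ell((\text{Bl}_0 V)^{\text{ss}}/G)$ hold simultaneously. This requires reconciling, for each cocharacter $\lambda$ of $G$, the family of $n_\lambda$-bounds on $V$ with those on the tangent space of $\text{Bl}_0 V$ at fixed points of the blowup, including Weyl-shifts on components $\mathbb{P}(V_a)$ with $a\neq 0$. Proposition \ref{bbw} enters to rewrite the relevant pushforwards from $\mathbb{P}(V_a)$ in terms of $G$-representations, and the symmetry of $V$ is essential: it is what ensures that the new tangent weights on $(\text{Bl}_0 V)^{\text{ss}}/G$ remain symmetric (as in Proposition \ref{indsym}) and that the window bounds are preserved under the sign reflection $\lambda \mapsto -\lambda$.
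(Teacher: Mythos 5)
Your overall strategy matches the paper's: reduce to the local model $V/G \xleftarrow{\pi} \text{Bl}_0 V / G \hookleftarrow (\text{Bl}_0 V)^{\text{ss}}/G$, invoke the Halpern--Leistner window theorem for the unstable locus of the blowup, and use symmetry of $V$. However, there is a genuine gap: you correctly flag ``calibrating $\mathcal{G}_\ell$'' as the main technical obstacle, but you do not resolve it, and without resolving it the argument does not go through. The resolution in the paper is a concrete strict inequality for the newly introduced Kempf--Ness strata of the blowup. Those strata are indexed by pairs $(\lambda, a)$ with $a<0$ and have $\lambda$-fixed loci $\mathbb{P}(V_a)$; the paper computes
\[n_{\lambda,a} = \sum_{i>a}(i+a)\dim V_i - \langle\lambda,\mathfrak{g}^{\lambda>0}\rangle > \langle\lambda, V^{\lambda>0}\rangle - \langle\lambda,\mathfrak{g}^{\lambda>0}\rangle = m_\lambda,\]
i.e.\ the window length for each new stratum is \emph{strictly larger} than $m_\lambda$. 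This strict inequality is what allows the choice of window constants $w_{\lambda,a}\notin\mathbb{Z}$ so that the window $\mathbb{G}\subset D^b(\text{Bl}_0 V/G)$ of the HL theorem contains $\pi^*\mathbb{D}_\ell(V/G)$. Merely observing that the ``persisting'' $n_\lambda$ on the fixed stack $\text{Tot}_{\mathbb{P}(V_0)}\mathcal{O}(-1)$ agrees with $m_\lambda$ (your Step 1, which is correct) handles the target category but not the window calibration.

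Two further points of divergence from the paper. First, you frame the construction as producing a window $\mathcal{G}_\ell$ with $\pi^*\colon \mathbb{D}_\ell(V/G)\xrightarrow{\sim}\mathcal{G}_\ell$; the paper does not need, and does not assert, that $\pi^*\mathbb{D}_\ell(V/G)$ itself is a weight window. It only shows $\pi^*\mathbb{D}_\ell(\X)$ lands inside the larger window $\mathbb{G}\cong D^b(\Y^{\text{ss}})$, and separately that $\pi^*$ is fully faithful (from $\pi_*\mathcal{O}_\Y = \mathcal{O}_\X$). Second, the paper exhibits the right adjoint explicitly as $\Phi\pi_*$ where $\Phi$ is the projection from Theorem \ref{symsod}; your proposal asserts admissibility and describes the complement by pushforwards from new strata, but does not construct the adjoint. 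Both points would need to be filled in for a complete proof, and the framing of the setup as a chamber change ``between the trivial linearization and $\mathcal{O}(1)$'' is not quite what is happening: one simply applies the window theorem to the $\mathcal{O}(1)$-unstable locus of the blowup.
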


\begin{proof}
To simplify the notation, let $\X:=\X_k$, $\Y:=\X_{k+1}$. We will see that the unstable loci are indexed by $(\lambda, a)$, where $\lambda:B\mathbb{G}_m\to\X$ and $a\in\mathbb{Z}$. For each $\Theta$-stratum $\mathcal{S}_{\lambda, a}$ with associated fixed stack $\mathcal{Z}_{\lambda, a}$, choose a real number $w_{\lambda, a}\notin\mathbb{Z}$. 
By \cite[Theorem 3.9]{HL}, there is an admissible subcategory $\mathbb{G}\hookrightarrow D^b(\Y)$ with objects complexes $\F$ such that for any $\Theta$-stratum $\mathcal{S}_{\lambda,a}$:
\[-\frac{n_{\lambda, a}}{2}+ \lambda^*(\ell)+
w_{\lambda, a}\leq \lambda^* \mathcal{F}
\leq \frac{n_{\lambda, a}}{2}+\lambda^*(\ell)+w_{\lambda, a}\]
with the property that
\[\text{res}:\mathbb{G}\cong D^b(\Y^{\text{ss}}).\]
We next characterize $\Theta$-strata and compute $n_{\lambda, a}$.
We can assume that we are in the local case 
\[V/G\xleftarrow{\pi} \text{Bl}_0V/G\cong\text{Tot}_{\mathbb{P}(V)}\left(\mathcal{O}(-1)\right)\hookleftarrow (\text{Bl}_0V)^{\text{ss}}/G,\]
where $G$ is a reductive group, $V$ is a symmetric $G$-representation with $0$ the only $G$-fixed point in $V$, and the linearization is given by the tautological line bundle $\mathcal{O}(1)$.
By the argument of Proposition \ref{indsym}, the unstable loci are determined by pairs 
\[(\lambda, \mathbb{P}(V_a))\] where $V_a$ is the $\lambda$-weight $a$ subspace of $V$ and $a<0$.
The $\lambda$-positive part of the normal bundle is \[N_{\mathbb{P}(V_a)/\mathbb{P}(V)}^{\lambda>0}\cong \bigoplus_{i>a} V_i\]
and $\lambda$ acts with weight $i+a$ on $V_i$. The length of the window $n_{\lambda, a}$
is thus:
\begin{multline}\label{m1}
    n_{\lambda, a}=\langle \lambda, N_{\mathbb{P}(V_a)/\mathbb{P}(V)}^{\lambda>0}\rangle-\langle \lambda, \mathfrak{g}^{\lambda>0}\rangle=\\ \sum_{i>a}(i+a)\text{dim}\,V_i-\langle \lambda, \mathfrak{g}^{\lambda>0}\rangle>
\langle\lambda, V^{\lambda>0}\rangle-
\langle\lambda, \mathfrak{g}^{\lambda>0}\rangle.
\end{multline}

The category $\mathbb{D}_\ell(\X)\subset D^b(\X)$ is defined by the conditions
\begin{equation}\label{m2}
    -\frac{m_\lambda}{2}+\lambda^*(\ell)
    \leq \text{wt}\,\lambda^*\mathcal{F}\leq \frac{m_\lambda}{2}+\lambda^*(\ell),
\end{equation}
where, in the local model $\X=V/G$ from above, $m_\lambda=\langle\lambda, V^{\lambda>0}\rangle-
\langle\lambda, \mathfrak{g}^{\lambda>0}\rangle$.

Similarly, the category $\mathbb{D}_\ell(\Y^{\text{ss}})$ of $D^b(\Y^{\text{ss}})$ is defined by the conditions
\begin{equation}\label{m3}
    -\frac{m_\lambda}{2}+\lambda^*(\ell)
    \leq \text{wt}\,\lambda^*\mathcal{F}\leq \frac{m_\lambda}{2}+\lambda^*(\ell).
\end{equation}
Indeed, a fixed substack $\mathcal{Z}$ associated to a map $\lambda:B\mathbb{G}_m\to\X$ is isomorphic, in the local model, to $\text{Tot}_{\mathbb{P}(V_0)}\left(\OO(-1)\right)$, where $V_0\subset V$ is the $\lambda$-fixed locus. This follows from the analysis at the end of the proof of Proposition \ref{indsym}.

The $\lambda$-fixed loci of $\Y^{\text{ss}}$ are all above $\lambda$-fixed loci of $\X$.
Using \eqref{m1}, \eqref{m2}, \eqref{m3}, we can thus choose weights $w_{\lambda, a}\notin\mathbb{Z}$ such that 
\begin{align*}
\pi^*:D^b(\X)&\to D^b(\Y)\\ \D_\ell(\X)&\subset \D_\ell\left(\Y^{\text{ss}}\right).
\end{align*}
Let $\Phi: D^b(\X)\to \D_\ell(\X)$ be the right adjoint of $\D_\ell(\X)\hookrightarrow D^b(\X)$ which exists by Theorem \ref{symsod}, see Subsection \ref{rightfunctor}. Consider the functor 
\[\Phi\pi_*:\D_\ell\left(\Y^{\text{ss}}\right)\to \D_\ell(\X),\] where recall that $\pi_*$ is the derived functor. We claim that $\pi^*$ and $\Phi\pi_*$ are adjoint. Let $A\in \D_\ell(\X)$ and $B\in \D_\ell\left(\Y^{\text{ss}}\right)$. Then
\[\text{RHom}_{\Y}(\pi^*A, B)\cong\text{RHom}_{\X}(A, \pi_*B)\cong\text{RHom}_{\X}(A, \Phi\pi_*B).\]
Finally, the functor $\pi^*$ is fully faithful. By the projection formula, it suffices to show that $\pi_*\mathcal{O}_{\Y}=\OO_{\X}$, which follows from a direct computation in the local case. Thus $\D_\ell(\X)$ is admissible in $\D_\ell\left(\Y^{\text{ss}}\right)$.
\end{proof}

We thus obtain that:

\begin{cor}\label{kirw}
The category $\D_\ell(\X)$ is admissible in $D^b\left(Y\right)$.
\end{cor}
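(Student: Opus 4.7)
The plan is to iterate the previous Proposition along the Edidin--Rydh tower
\[\X=\X_0\xleftarrow{\pi_0}\X_1\hookleftarrow\X_1^{\text{ss}}\xleftarrow{\pi_1}\cdots\xleftarrow{\pi_n}\X_{n+1}\hookleftarrow \X_{n+1}^{\text{ss}}=Y,\]
and to identify $\D_\ell(Y)$ with $D^b(Y)$ at the terminal step. First I would record that the hypotheses of the previous Proposition propagate through the tower. Proposition \ref{indsym} shows that if $\X_k$ is symmetric then so is $\X_{k+1}^{\text{ss}}$, so by induction each $\X_k^{\text{ss}}$ (and each $\X_k$) is symmetric and satisfies Assumption B, hence $\D_\ell(\X_k)$ is well defined and by the previous Proposition right admissible inside $\D_\ell(\X_{k+1}^{\text{ss}})$. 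Moreover, the restriction equivalence $\text{res}:\mathbb{G}\xrightarrow{\sim} D^b(\X_{k+1}^{\text{ss}})$ from \cite[Theorem 3.9]{HL} used in that proof identifies $\D_\ell(\X_{k+1}^{\text{ss}})$ as a right admissible subcategory of $D^b(\X_{k+1})$, so the composite inclusion $\D_\ell(\X_k)\hookrightarrow D^b(\X_{k+1})$ is also right admissible.

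Next I would chain together the admissibility steps. Right admissibility is transitive: if $\mathcal{A}\hookrightarrow \mathcal{B}$ has right adjoint $\Phi_1$ and $\mathcal{B}\hookrightarrow \mathcal{C}$ has right adjoint $\Phi_2$, then $\Phi_1\Phi_2$ is a right adjoint to $\mathcal{A}\hookrightarrow \mathcal{C}$. Applying this iteratively along
\[\D_\ell(\X_0)\hookrightarrow \D_\ell(\X_1^{\text{ss}})\hookrightarrow \D_\ell(\X_2^{\text{ss}})\hookrightarrow\cdots\hookrightarrow \D_\ell(\X_{n+1}^{\text{ss}})=\D_\ell(Y)\]
yields that $\D_\ell(\X)$ is right admissible in $\D_\ell(Y)$.

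Finally, I would observe that $\D_\ell(Y)=D^b(Y)$. Indeed, since $Y$ is a Deligne--Mumford stack, all stabilizer groups are finite, so every morphism $\lambda:B\mathbb{C}^*\to Y$ must factor through a point with trivial $\mathbb{C}^*$-action; consequently $n_\lambda=0$ and $\text{wt}\,\lambda^*\mathcal{F}=0=\text{wt}\,\lambda^*\ell$ for every coherent complex $\mathcal{F}$, so the defining inequalities of $\D_\ell(Y)$ are vacuously satisfied. Combining the two steps gives that $\D_\ell(\X)$ is admissible inside $D^b(Y)$.

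The main subtlety lies in the transitivity step: one must verify that the right adjoint $\Phi\pi_*$ produced in the proof of the previous Proposition is well defined on all of $\D_\ell(\X_{k+1}^{\text{ss}})$ (not just on $D^b(\X_{k+1}^{\text{ss}})$), so that the composition along the tower actually lands in $\D_\ell(\X)$ at each stage. This is automatic once one notes that $\D_\ell(\X_{k+1}^{\text{ss}})\subset D^b(\X_{k+1}^{\text{ss}})$ and the right adjoint constructed there is defined by the universal property, so no further work is needed beyond citing the previous Proposition.
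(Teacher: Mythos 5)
Your proof is correct and reconstructs the argument the paper leaves implicit (the corollary follows ``We thus obtain that:'' with no further text). The two ingredients you identify are exactly what is needed: transitivity of right admissibility chained along the Edidin--Rydh tower, plus the terminal observation that $\D_\ell(Y)=D^b(Y)$ since the Deligne--Mumford stack $Y$ admits no nontrivial maps $B\mathbb{G}_m\to Y$ --- equivalently, no nontrivial $\Theta$-strata, so $\mathbb{A}_\ell(Y)=0$ and $D^b(Y)=\D_\ell(Y)$ by Theorem~\ref{symsod}; this terminal identification is the essential step you correctly supplied.
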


We finally show that $\mathbb{D}(\X):=\mathbb{D}_0(\X)$ is an NCR of $X$.

\begin{prop}
Let $\ell\in \text{Pic}(\X)_\mathbb{R}$ be such that for any cocharacter $\lambda$, 
\[-\frac{n_\lambda}{2}+\lambda^*(\ell)\leq 0\leq \frac{n_\lambda}{2}+\lambda^*(\ell).\]
Then $\mathbb{D}_\ell(\X)$ is an NCR of $X$.
\end{prop}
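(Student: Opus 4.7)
The plan is to use $F=\pi_*$ and $G=\pi^*$ as the adjoint pair, and to verify the three requirements in the definition of an NCR in turn: (i) $\D_\ell(\X)$ is smooth and proper over $X$; (ii) these functors restrict to $G:\text{Perf}\,(X)\to\D_\ell(\X)$ and $F:\D_\ell(\X)\to D^b(X)$ and form an adjoint pair; (iii) $FG\cong\id_{\text{Perf}\,(X)}$.

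Step (i) is immediate from Corollary \ref{kirw}: the Edidin--Rydh resolution $Y$ is a smooth Deligne--Mumford stack proper over $X$, and $\D_\ell(\X)$ is admissible in $D^b(Y)$, which is exactly the definition of smooth and proper over $X$ used in this paper.

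For step (ii), the substantive point is that $\pi^*$ takes $\text{Perf}\,(X)$ into $\D_\ell(\X)$. For any $\mathcal{E}\in\text{Perf}\,(X)$ and any $\lambda:B\C^*\to\X$, the composite $\pi\circ\lambda:B\C^*\to X$ factors through a closed point of the scheme $X$ and so carries the trivial $\C^*$-action; hence $\text{wt}\,\lambda^*\pi^*\mathcal{E}=0$, and the window condition defining $\D_\ell(\X)$ for $\pi^*\mathcal{E}$ reduces to $-n_\lambda/2+\text{wt}\,\lambda^*\ell\leq 0\leq n_\lambda/2+\text{wt}\,\lambda^*\ell$, which is precisely the hypothesis on $\ell$. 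On the other side, $\pi$ is cohomologically affine (a good moduli space morphism, \cite{A}), so $\pi_*$ is exact on quasicoherent sheaves and preserves coherence; in particular $\pi_*:D^b(\X)\to D^b(X)$. The usual $\pi^*\dashv\pi_*$ adjunction on ambient derived categories then restricts to an adjunction on the full subcategories $\text{Perf}\,(X)\subset D^b(X)$ and $\D_\ell(\X)\subset D^b(\X)$, since $\Hom$s in the subcategories agree with the ambient $\Hom$s.

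Step (iii) follows from the projection formula $\pi_*\pi^*\mathcal{E}\cong\mathcal{E}\Ltensor\pi_*\OO_\X$ combined with the defining isomorphism $\OO_X\xrightarrow{\sim}\pi_*\OO_\X$ of a good moduli space, yielding $FG\cong\id_{\text{Perf}\,(X)}$. The only piece of content beyond bookkeeping is the weight computation in step (ii), and that is essentially tautological once one observes that $\pi\circ\lambda$ must factor through a point of the scheme $X$; the remaining ingredients just package the results proved earlier in this section.
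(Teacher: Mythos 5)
Your proof is correct and is essentially the paper's argument: once one observes that $\pi^*$ takes $\text{Perf}(X)$ into $\D_\ell(\X)$ (since $\pi\circ\lambda$ factors through a point of the scheme $X$, so $\lambda^*\pi^*\mathcal{E}$ has weight zero), the paper's functor $G=\Phi\pi^*$ coincides with your $\pi^*$ and its $F=\pi_*\iota$ is just your $\pi_*$ restricted to $\D_\ell(\X)$, and both proofs conclude via the projection formula, $\pi_*\OO_\X=\OO_X$, and Corollary~\ref{kirw} for smoothness and properness.
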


\begin{proof}
Consider the inclusion functor and its natural adjoint obtained by Theorem \ref{symsod} and the discussion in Subsection \ref{rightfunctor}:
\begin{align*}
    \iota&: \mathbb{D}_\ell(\X)\hookrightarrow D^b(\X)\\
    \Phi&: D^b(\X)\to \D_\ell(\X).
\end{align*}
Consider the functors \begin{align*}
\pi_*\iota&:\D_\ell(\X)\to D^b(X)\\
\Phi\pi^*&:\text{Perf}\,(X)\to \D_\ell(\X).
\end{align*}
We need to show that for $\F\in \text{Perf}\,(X)$, we have that
\[\pi_*\iota\Phi\pi^*(\F)=\F.\]
The complexes $\pi^*\F$ have weight zero for any cocharacter $\lambda$, and thus they are in $\D_\ell(\X)$. This means that $\iota\Phi\pi^*\F=\pi^*\F$. We have that
\[\pi_*\iota\Phi\pi^*(\F)=\pi_*\pi^*(\F)=\F\otimes \pi_*\mathcal{O}_\X=\F.\]
The last equality follows from $\pi_*\mathcal{O}_\X=\mathcal{O}_X$, see Subsection \ref{goodmoduli}.
Finally, the category $\mathbb{D}_\ell(\X)$ is smooth and proper over $X$ by Corollary \ref{kirw}, and thus it is an NCR of $X$.
\end{proof}

\section{Intersection cohomology for quotient singularities}\label{comp}


In this Section, $\X$ is a symmetric stack satisfying Assumption B. Let \[\pi:\X\to X\] be the good moduli space morphism. 
Denote by $I$ the set of connected components of $\underline{\text{Map}}\,(B\mathbb{G}_m, \X)$, by $o$ the connected component $\X$, and let $J:=I-o$. Further, let $I'$ be the set of connected components of $\underline{\text{Map}}\,(\Theta, \X)$, $o$ the connected component corresponding to $\X$, and let $J':=I'-o$.
For an attracting stack $\mathcal{S}$ in $J'$ with associated fixed stack $\mathcal{Z}$, consider the map
\[p_{\Sa*}q_{\Sa}^*: \HH^\cdot(\Z)\to \HH^\cdot(\X).\]
Define
\[\textbf{B}:=\text{image}\,\left(\bigoplus_{J'} p_{\Sa*}q_{\Sa}^*: \HH^\cdot(\mathcal{Z})\to \HH^\cdot(\X)\right).\]
Recall that $D_{\text{shvs}}(X)$ is the category of complexes of constructible sheaves on a space $X$. Let $\left(\tau^{\leq i}, \tau^{>i}\right)$ be the functors corresponding to the usual t-structure on $D_{\text{shvs}}(X)$ and let $D_{\text{shvs}}^{\leq i}(X)$ the subcategory of sheaves with $\tau^{>i}=0$.
Let $\left({}^p\tau^{\leq i}, {}^p\tau^{>i}\right)$ be the functors associated to the perverse t-structure on $D_{\text{shvs}}(X)$ and denote by ${}^pD_{\text{shvs}}^{\leq i}(X)$ the subcategory of sheaves with ${}^p\tau^{>i}=0$. The map $\pi$ induces a perverse filtration:
\[\textbf{P}^{\leq i}\HH^\cdot(\X):=\text{image}\big(\HH^\cdot\left(X,{}^p\tau^{\leq i}R\pi_*\IC_\X\right)\to \HH^\cdot\left(\X,R\pi_*\IC_\X\right)\big)\subset \HH^\cdot(\X).\]
If $\X$ satisfies Assumption C, then $\textbf{P}^{\leq 0}\HH^\cdot(\X)\cong \IH^\cdot(X)$ by Proposition \ref{prp}.
The main result we prove in this Section is:

\begin{thm}\label{Thmcoh}
Let $\X$ be a symmetric stack satisfying Assumption B.
Then
\[\HH^\cdot(\X)=\textbf{P}^{\leq 0}\HH^\cdot(\X)\oplus \textbf{B}.\]
\end{thm}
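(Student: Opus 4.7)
The plan is to apply a Decomposition Theorem for the good moduli space map $\pi:\X\to X$ (namely Proposition \ref{bbdg}), and match the two sides of the claimed decomposition against the non-positive and strictly positive perverse summands, using the symmetry hypothesis to control perverse degrees.

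First, I would invoke Proposition \ref{bbdg} to obtain a direct sum decomposition $R\pi_*\IC_{\X}\cong\bigoplus_{i\in\mathbb{Z}}\mathcal{P}_i[-i]$ with each $\mathcal{P}_i$ a semisimple perverse sheaf on $X$. By the definition of the perverse filtration, $\textbf{P}^{\leq 0}\HH^\cdot(\X)$ is the image of $\bigoplus_{i\leq 0}\HH^\cdot(X,\mathcal{P}_i[-i])$, so its natural complement inside $\HH^\cdot(\X)=\bigoplus_i\HH^{\cdot-i}(X,\mathcal{P}_i)$ is the contribution of the $i\geq 1$ summands. The theorem therefore reduces to showing that $\textbf{B}$ coincides with this strictly positive piece.

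Next, I would show the disjointness $\textbf{B}\cap\textbf{P}^{\leq 0}\HH^\cdot(\X)=0$, i.e.\ every class $p_{\mathcal{S}*}q_{\mathcal{S}}^*\alpha$ coming from an attracting stack sits in perverse degree $\geq 1$. By Corollary \ref{ahr2}, this can be checked after localizing at each closed point of $X$, reducing to the model $\X=V/G$ where the attracting stack is $V^{\lambda\geq 0}/G^{\lambda\geq 0}\hookrightarrow V/G$. The symmetry of $V$ forces $\dim V^{\lambda>0}=\dim V^{\lambda<0}$, yielding a strict codimension bound on the image of $\pi\circ p_{\mathcal{S}}$ in $X$; combined with the standard $t$-exactness bounds for a proper pushforward in the perverse $t$-structure, this forces $p_{\mathcal{S}*}q_{\mathcal{S}}^*\alpha$ to land in strictly positive perverse degree.

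For the reverse inclusion $\textbf{P}^{>0}\HH^\cdot(\X)\subseteq\textbf{B}$, the supports of the positive perverse summands $\mathcal{P}_i$ (for $i\geq 1$) are contained in the union of images $\pi(p_{\mathcal{S}}(\mathcal{S}))$ as $\mathcal{S}$ ranges over attracting stacks, since these images cover the exceptional locus of $\pi$. Working in the local model and proceeding by induction on the strata of $X$, I would realize each class in $\bigoplus_{i\geq 1}\HH^{\cdot-i}(X,\mathcal{P}_i)$ as a sum of pushforwards $p_{\mathcal{S}*}q_{\mathcal{S}}^*\alpha$ from the attracting stacks stratifying this locus. The main obstacle is the degree count in the disjointness step: one must convert the symmetric hypothesis into a \emph{strict} inequality on the shift in perverse degrees when pushing forward, so that the image lies in $\textbf{P}^{\geq 1}$ rather than merely $\textbf{P}^{\geq 0}$; without symmetry the shift is only weakly positive and no such decomposition is expected.
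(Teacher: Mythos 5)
Your high-level architecture matches the paper's: reduce to the local model $\X=V/G$ via Corollary \ref{ahr2}, invoke the Decomposition Theorem for $\pi$ (Proposition \ref{bbdg}), and argue that classes from attracting stacks land in strictly positive perverse degree while the positive perverse summands are all captured by attracting stacks. The paper does exactly this, via Propositions \ref{CH}, \ref{Levi}, and \ref{prp}.

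However, there is a real gap in the disjointness step, and you flag it yourself without resolving it. You write that the symmetry of $V$ gives a ``strict codimension bound'' that, with perverse $t$-exactness for proper maps, forces the image into ${}^p D^{\geq 1}$. This conflates two separate inputs. In the paper's Proposition \ref{prp}, symmetry is used only to ensure the shift matches \emph{exactly}: since $q$ is an affine bundle with $\operatorname{reldim} q = c_\lambda$ (this equality is precisely the symmetry hypothesis), the composite $p_*q^*\IC_{\X^\lambda}\to p_*\IC_{\X^{\lambda\geq 0}}[-c_\lambda]\to\IC_\X$ has the right cohomological normalization. That alone only places the image in ${}^p D^{\geq 0}$. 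The strict bound ${}^p D^{\geq 1}$ has a different source: for $\lambda$ nontrivial, the torus $T^\lambda\subset G^\lambda$ acting trivially on $V^\lambda$ is nontrivial, so $\dim \X^\lambda < \dim X^\lambda$ (Assumption B fails for $\pi^\lambda$), and pushing $\IC_{\X^\lambda}$ forward along the $BT^\lambda$-gerbe $\X^\lambda\to\widetilde{\X^\lambda}=V^\lambda/\widetilde{G^\lambda}$ produces an extra factor of $\HH^\cdot(BT^\lambda)[-\dim T^\lambda]$, which sits in degree $\geq \dim T^\lambda \geq 1$; one then applies the inductive hypothesis (Assumption B and symmetry) to $\widetilde{\pi^\lambda}:\widetilde{\X^\lambda}\to X^\lambda$. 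Without isolating this torus contribution, you cannot upgrade $\geq 0$ to $\geq 1$, and the decomposition fails. A ``strict codimension bound on $\pi(p_\Sa(\Sa))$'' does not by itself give this, since $\pi$ is not proper and the image can be all of $X$.

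For the reverse inclusion, your ``induction on strata'' gesture is also incomplete: the paper uses Proposition \ref{CH}, which rests on a Corti--Hanamura support lemma applied to Totaro-type scheme approximations $S_n$ of $\X$ (turning the non-proper $\pi$ into genuine proper maps $q_n:S_n\to X$ where the Decomposition Theorem applies). You would need an argument of this kind to justify that every summand of ${}^p\mathcal{H}^i(R\pi_*\IC_\X)$ supported on $X^\lambda$ is hit by the attracting map $p_{\Sa*}q_\Sa^*$, rather than merely that the supports are covered set-theoretically by the images.
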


We first explain that the BBDG Decomposition Theorem \cite{BBD} implies a Decomposition Theorem for the map $\pi$.

\begin{prop}\label{bbdg}
There is a decomposition 
\[R\pi_*\IC_{\X}\cong\bigoplus_{i\geq 0}{}^p\mathcal{H}^i(R\pi_*\IC_\X)[-i]\]
and each sheaf ${}^p\mathcal{H}^i(R\pi_*\IC_\X)$ is a direct sum of sheaves $\IC_Z(\mathcal{L})$ for $Z\subset X$ and $\mathcal{L}$ a local system on an open smooth subset of $Z$.
\end{prop}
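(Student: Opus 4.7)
The plan is to deduce the statement from the classical BBDG Decomposition Theorem applied to the proper morphism $f: Y\to X$ from the Kirwan resolution of Subsection \ref{ncr2}, and to transfer the decomposition to $\pi: \mathcal{X}\to X$ by an induction along the Kirwan tower.

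Since symmetric stacks are smooth, $\IC_{\mathcal{X}} = \mathbb{Q}_{\mathcal{X}}[\dim\mathcal{X}]$ and $\IC_Y = \mathbb{Q}_Y[\dim Y]$, and $f: Y\to X$ is a proper morphism from a smooth DM stack to the scheme $X$. Applying BBDG to $f$, either by passing to the coarse moduli space of $Y$ (which is proper over $X$) or by a DM-stack version, produces
\[Rf_*\IC_Y \cong \bigoplus_{i\geq 0}{}^p\mathcal{H}^i(Rf_*\IC_Y)[-i],\]
with each cohomology object a direct sum of sheaves $\IC_Z(\mathcal{L})$ on $X$.

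To obtain the analogous decomposition for $R\pi_*\IC_{\mathcal{X}}$, I would show by induction along the Kirwan tower $\mathcal{X}=\mathcal{X}_0\leftarrow\mathcal{X}_1\hookleftarrow\mathcal{X}_1^{\text{ss}}\leftarrow\cdots\hookleftarrow Y$ that $R\pi_*\IC_{\mathcal{X}}$ is a direct summand of $Rf_*\IC_Y$, so that both conclusions of BBDG descend to $\pi$. Proposition \ref{indsym} ensures each intermediate stack remains symmetric, hence smooth. At each proper blow-up step along a $\Theta$-stratum, the decomposition theorem for smooth blow-ups gives a splitting whose extra summands are supported on the exceptional divisor and pushed from the corresponding $\Theta$-stratum. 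At each open inclusion of a semistable locus, the complement is a union of $\Theta$-strata, so the good moduli space pushforwards along the two stacks differ by IC-contributions supported on the images of these strata.

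The main obstacle is verifying that the extra summands appearing at each level of the tower, once pushed to $X$, are themselves direct sums of $\IC_Z(\mathcal{L})$ sheaves on subvarieties of $X$, and that they separate cleanly from the summand corresponding to $R\pi_*\IC_{\mathcal{X}}$. The symmetric hypothesis is essential here: the identity $n_\lambda = -n_{-\lambda}$ forces the exceptional contributions to lie in strictly positive perverse degrees, yielding the required separation and compatibility with the perverse $t$-structure. An alternative approach bypassing the induction would be to invoke directly an extension of BBDG to good moduli space morphisms of symmetric stacks, proved étale-locally using the model $A/G\to A\sslash G$ of Corollary \ref{ahr2} together with equivariant BBDG.
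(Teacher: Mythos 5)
Your route—descending along the Kirwan resolution tower—is genuinely different from the paper's, which instead proves the statement via Totaro approximations: after reducing to the local model $V/G\to V\sslash G$ (Corollary \ref{ahr2}), the paper introduces $\X_n=(V\oplus V^{\oplus n})/G\times\mathbb{C}^*$ and the GIT quotients $S_n$ of their stable loci, shows $\left(V\oplus V^{\oplus n}\right)^{\text{st}}=\left(V\oplus V^{\oplus n}\right)^{\text{ss}}$ so that $S_n$ has finite quotient singularities, and then shows (Proposition \ref{truncations}) that the bounded-below perverse truncations ${}^p\tau^{\leq a}R\pi_{n*}\mathbb{Q}_{\X_n}$ and ${}^p\tau^{\leq a}Rq_{n*}\mathbb{Q}_{S_n}$ stabilize for $n$ large, because the codimension of the excised locus grows with $n$. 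BBDG for the proper maps $q_n:S_n\to X$ then yields the decomposition. Your ``alternative approach'' via the local model plus an ``equivariant BBDG'' is precisely this, but ``equivariant BBDG'' is not a theorem one can cite; the Totaro approximation is the mechanism that makes it precise, and this is what the paper supplies.

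The primary route you propose has a genuine gap at the semistable-restriction steps. At a blow-up $\pi_k:\X_{k+1}\to\X_k$ (which, incidentally, is along the locus of maximal stabilizer dimension in the Edidin--Rydh construction, not along a $\Theta$-stratum), Deligne's blow-up decomposition does give $\mathbb{Q}_{\X_k}$ as a summand of $R\pi_{k*}\mathbb{Q}_{\X_{k+1}}$. But at an open inclusion $j:\X_{k+1}^{\text{ss}}\hookrightarrow\X_{k+1}$, the only natural map of pushforwards is the one induced by the adjunction unit $\mathbb{Q}_{\X_{k+1}}\to Rj_*\mathbb{Q}_{\X_{k+1}^{\text{ss}}}$, and there is no formal reason for this map to split after pushing to $X$. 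Your assertion that ``the good moduli space pushforwards along the two stacks differ by IC-contributions supported on the images of these strata'' is exactly what needs to be proved; in Kirwan's original cohomological argument this splitting across unstable strata is the content of Kirwan surjectivity/perfection of the Morse stratification, which requires a substantial argument, not a consequence of the definitions. Moreover, the appeal to the symmetric hypothesis to force exceptional contributions into positive perverse degree rests on a wrong identity: for symmetric $\X$ one has $n_\lambda=n_{-\lambda}$, not $n_\lambda=-n_{-\lambda}$, since the weights of the cotangent complex are symmetric under negation. The ``positive perverse degree'' statement is in fact proved in the paper (Proposition \ref{prp}), but as a consequence of the decomposition supplied by Proposition \ref{bbdg}, not as an ingredient in its proof; using it the way you do would be circular. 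Without establishing the direct-summand claim $R\pi_*\IC_{\X}\mid Rf_*\IC_Y$ by some independent argument, the induction does not close.
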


An explicit computation of $R\pi_*\IC_\X$ for $\X$ a stack of representation of a quiver appears in \cite[Theorem 4.6]{MR}, \cite[Proof of Theorem 4.10]{DM}. 

\begin{proof}
The idea, inspired by Totaro's approximations of quotient stacks, is to approximate the stack $\X$ with smooth varieties proper over $X$ and apply the BBDG Decomposition Theorem \cite{BBD}. By Corollary \ref{ahr2}, it suffices to treat the local case $\X=V/G$.
\smallskip

For $n\geq 1$, consider the stacks
\[\X_n:=\left(V\oplus V^{\oplus n}\right)/G\times\mathbb{C}^*,\]
where $\mathbb{C}^*$ acts with weight zero on the first copy of $V$ and with weight $1$ on the summand $V^{\oplus n}$, and $G$ acts naturally on all copies of $V$. Consider the linearization $G\times\mathbb{C}^*\xrightarrow{\text{pr}_2} \mathbb{C}^*$. Define the schemes
\begin{align*}
    S_n&:=\left(V\oplus V^{\oplus n}\right)^{\text{st}}\sslash G\times\mathbb{C}^*\\
    S_n^o&:=\left(V\oplus V^{\oplus n}\right)^{\text{st}, \text{nf}}\sslash G\times\mathbb{C}^*.
\end{align*}
The superscript $\textit{nf}$ means that we take the open subset of the stable locus $\left(V\oplus V^{\oplus n}\right)^{\text{st}}$ not fixed by any elements of $G$. Consider the natural maps:
\begin{equation*}
    \begin{tikzcd}
    \X_n\arrow[d,"t_n"']& S_n^o\arrow[l,"i_n"']\arrow[d,hook,"\ell_n"]\\
    \X\arrow[d,"\pi"']& S_n\arrow[dl,"q_n"]\\
    X.
    \end{tikzcd}
\end{equation*}
Denote by $\pi_n:=\pi t_n:\X_n\to X$. We discuss two preliminary results.

\begin{prop}\label{stable}
We have that $\left(V\oplus V^{\oplus n}\right)^{\text{st}}=\left(V\oplus V^{\oplus n}\right)^{\text{ss}}$.
\end{prop}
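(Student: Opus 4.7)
The approach is through the Hilbert--Mumford numerical criterion. For a $1$-parameter subgroup $(\lambda, k)$ of $G \times \C^*$ and a point $(v, w) \in V \oplus V^{\oplus n}$, the limit $\lim_{t \to 0} (\lambda(t), t^k) \cdot (v, w)$ exists precisely when $v \in V^{\lambda \geq 0}$ and each $w_i \in V^{\lambda \geq -k}$, and the Hilbert--Mumford weight for the linearization $\chi = \mathrm{pr}_2$ is then simply $k$. Consequently, semistability of $(v, w)$ is equivalent to $k \geq 0$ for every such $1$-PS with limit existing, while stability is equivalent to the strict inequality $k > 0$ for every nontrivial such $1$-PS. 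The first step is to record this numerical translation of the two conditions.

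The inclusion $(V \oplus V^{\oplus n})^{\text{st}} \subseteq (V \oplus V^{\oplus n})^{\text{ss}}$ is automatic; the content of the proposition is the reverse inclusion. Since $\chi = \mathrm{pr}_2$ is trivial on $G$, a nontrivial $1$-PS can realize Hilbert--Mumford weight $0$ only through a pair $(\lambda, 0)$ with $\lambda \neq 0$ a cocharacter of $G$ satisfying $v, w \in V^{\lambda \geq 0}$. The task therefore reduces to showing that for any $\chi$-semistable $(v, w)$, every such $(\lambda, 0)$ must in fact fix $(v, w)$, i.e., $v, w \in V^{\lambda = 0}$, in which case $(\lambda, 0)$ does not genuinely destabilize. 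The symmetric hypothesis enters here: by the paper's convention, $V$ symmetric amounts to $V \cong V^*$ as $G$-representations (the two have identical $T$-characters, which determines the isotypic decomposition), giving a $G$-invariant nondegenerate bilinear pairing $B \colon V \otimes V \to \C$ under which $V^{\lambda > 0}$ and $V^{\lambda < 0}$ are in perfect duality while $V^{\lambda = 0}$ is self-paired non-degenerately.

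The central step combines $B$ with the semistability hypothesis to rule out strictly positive $\lambda$-weight components in $v$ or in any $w_i$. The rough idea is that a nonzero $V^{\lambda > 0}$-component of $v$ pairs under $B$ with elements of $V^{\lambda < 0}$ to produce $G$-invariant functions of positive $\chi$-weight on $V \oplus V^{\oplus n}$; evaluating these at $(v, w)$ and invoking semistability produces, after clearing denominators in a small rational perturbation $\lambda' = \lambda + \epsilon\mu$ in the direction of the dual component, a nontrivial $1$-PS satisfying $v \in V^{\lambda' \geq 0}$ and $k_{\min}(\lambda', w) < 0$, contradicting $\chi$-semistability. This forces $v, w \in V^{\lambda = 0}$, as required. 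The main technical obstacle is executing the perturbation cleanly in the non-abelian case: for abelian $G$ the argument is a direct combinatorial check on the $T$-weight multiset, which is symmetric by hypothesis, whereas for general $G$ one first reduces to a maximal torus via the standard Hilbert--Mumford/Kempf--Ness reduction and then carries out the weight-multiset analysis on $T$-data.
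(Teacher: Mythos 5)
Your approach is genuinely different from the paper's, and the paper's argument is substantially simpler: it does \emph{not} invoke the symmetric hypothesis at all. The paper reduces to showing that no nontrivial cocharacter $\mu$ of $G\times\mathbb{C}^*$ has semistable fixed points, then writes $\mu=\lambda\cdot z^b$ with $\lambda$ a cocharacter of $G$ and $z$ the identity cocharacter of the scaling $\mathbb{C}^*$. When $b\neq 0$ the $\mu$-fixed locus is destabilized outright by $\mu$ or $\mu^{-1}$, since the linearization $\mathrm{pr}_2$ has nonzero $\mu$-weight. When $b=0$ the paper observes that the $\lambda$-fixed locus is contained in the attracting set of $\lambda\cdot z$ (which collapses the weight-one $V^{\oplus n}$-component of a $\lambda$-fixed point) and concludes from this that it is unstable. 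The working ingredient throughout is the auxiliary $\mathbb{C}^*$-factor in the Totaro approximation, not the symmetry of $V$ or any invariant pairing.

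Independently of the comparison, your proposal has a genuine logical error at its last step, and it is fatal. You correctly set up the numerical criterion, and you correctly observe that stability requires $k>0$ for every nontrivial one-parameter subgroup $(\lambda,k)$ with limit existing. You then argue that semistability plus the symmetric hypothesis forces $v,w\in V^{\lambda=0}$, and conclude that ``in this case $(\lambda,0)$ does not genuinely destabilize.'' This is exactly backwards. If $v,w\in V^{\lambda=0}$, then $(\lambda,0)$ is a nontrivial one-parameter subgroup of $G\times\mathbb{C}^*$ contained in the stabilizer of $(v,w)$, with Hilbert--Mumford weight $k=0$, not $k>0$. By the very criterion you quoted at the start, such a $(v,w)$ is therefore \emph{not} stable; in King's formulation this is the statement that a stable point has finite stabilizer modulo the kernel of the action, and a point fixed by a nontrivial one-parameter subgroup fails it. So your intermediate reduction, even if it were carried out cleanly, would produce a strictly semistable point rather than ruling one out --- the opposite of what the proposition asserts. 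What one actually has to show is that the entire $\lambda$-fixed locus is \emph{unstable}, i.e., eliminated by the semistability criterion, which is what the paper achieves by exploiting the extra $z$-direction; it is not achieved by showing the point lands in $V^{\lambda=0}$.
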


\begin{proof}
Let $\mu$ be a cocharacter of $G\times\mathbb{C}^*$. It is enough to show that there are no $\mu$-fixed points on $\left(V\oplus V^{\oplus n}\right)^{\text{ss}}$. Write $\mu=\lambda\cdot z^b$ for $b\in\mathbb{Z}$, $\lambda$ a cocharacter of $G$, and $z$ the identity cocharacter of $\C^*$.

If $b>0$, then $\left(V\oplus V^{\oplus n}\right)^{\mu}$ is unstable. If $b<0$, then $\left(V\oplus V^{\oplus n}\right)^{\mu^{-1}}$ is unstable. If $b=0$, then
\[\left(V\oplus V^{\oplus n}\right)^{\lambda}\subset \left(V\oplus V^{\oplus n}\right)^{\lambda\cdot z\geq 0},\] so $\left(V\oplus V^{\oplus n}\right)^{\lambda}$ is contained in an unstable locus. 
\end{proof}

\begin{prop}\label{truncations}
Fix $a\in\mathbb{Z}$.
For $n$ large enough, we have that
\begin{align*}
    {}^p\tau^{\leq a}R\pi_{n*}\mathbb{Q}_{\X_n}&\cong{}^p\tau^{\leq a}R\pi_{n*}i_{n*}\mathbb{Q}_{S_n^o}\\
    {}^p\tau^{\leq a}Rq_{n*}\mathbb{Q}_{S_n}&\cong{}^p\tau^{\leq a}R\pi_{n*}i_{n*}\mathbb{Q}_{S_n^o}.
\end{align*}
\end{prop}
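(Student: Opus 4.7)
The plan is to deduce both isomorphisms from a single codimension estimate. Let $Z_n := \X_n \setminus S_n^o$ and $W_n := S_n \setminus S_n^o$, with closed inclusions $k_n : Z_n \hookrightarrow \X_n$ and $k_n' : W_n \hookrightarrow S_n$. The open-closed triangles
\[
k_{n*} k_n^! \mathbb{Q}_{\X_n} \to \mathbb{Q}_{\X_n} \to R i_{n*} \mathbb{Q}_{S_n^o}, \qquad k_{n*}' (k_n')^! \mathbb{Q}_{S_n} \to \mathbb{Q}_{S_n} \to R\ell_{n*} \mathbb{Q}_{S_n^o},
\]
combined with $q_n \ell_n = \pi_n i_n$, reduce the proposition to showing that $R\pi_{n*} k_{n*} k_n^! \mathbb{Q}_{\X_n}$ and $Rq_{n*} k_{n*}' (k_n')^! \mathbb{Q}_{S_n}$ lie in perverse degrees $> a$ for $n \gg 0$.

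For the codimension estimate, the preimage of $Z_n$ in $V \oplus V^{\oplus n}$ is the union of the unstable locus and the non-free stable locus. By Hilbert--Mumford combined with Proposition \ref{stable}, the unstable locus is a finite union of Kempf--Ness strata attached to cocharacters $\mu$ of $G \times \mathbb{C}^*$; since the $\mu$-positive subspace in $V^{\oplus n}$ has dimension linear in $n$, each stratum has codimension linear in $n$ inside $V \oplus V^{\oplus n}$. A parallel count bounds the non-free locus. Taking the quotient by $G \times \mathbb{C}^*$ and passing to the good moduli space preserves codimensions, giving $\operatorname{codim}(Z_n,\X_n), \operatorname{codim}(W_n, S_n) \to \infty$.

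For the perverse estimate, smoothness of $\X_n$ gives $k_n^!\mathbb{Q}_{\X_n}$ in cohomological degrees $\geq 2\operatorname{codim}(Z_n)$, hence $k_{n*}k_n^!\mathbb{Q}_{\X_n}$ in perverse degrees $\geq \dim \X_n + \operatorname{codim}(Z_n)$, and analogously on the (quotient-smooth) $S_n$. The second isomorphism follows readily: $q_n : S_n \to X$ is projective --- being $\operatorname{Proj}$ of the graded $\mathbb{C}[V]^G$-algebra of $\mathbb{C}^*$-weight spaces over the affine $X_n = X$ --- so proper pushforward decreases perverse degrees by at most the fiber dimension $\leq \dim W_n$, yielding $Rq_{n*}k_{n*}'(k_n')^!\mathbb{Q}_{S_n}$ in perverse degrees $\geq 2\operatorname{codim}(W_n) > a$ for $n \gg 0$; applying ${}^p\tau^{\leq a}$ to the triangle gives the claim.

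The main obstacle is the first isomorphism, as $\pi_n$ is not proper. To handle this, I would stratify $Z_n$ by its Kempf--Ness substacks together with the non-free locus: each Kempf--Ness stratum retracts onto a fixed substack whose good moduli space is finite (hence proper) over $X$, while the remaining stacky good-moduli step has fibers of type $BG$ whose cohomology is concentrated in non-negative degrees and therefore only increases perverse degrees. Applying the proper-pushforward bound stratum by stratum, the finitely many contributions combine to give $R\pi_{n*}k_{n*}k_n^!\mathbb{Q}_{\X_n}$ in perverse degrees $\geq 2\operatorname{codim}(Z_n)$, exceeding $a$ for $n$ large, which yields the first isomorphism.
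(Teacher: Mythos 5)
Your approach diverges from the paper's in the one step that matters, and the divergence creates both unnecessary work and a genuine gap.

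You correctly identify the open–closed distinguished triangle and the fact that the codimension of the complement of $S_n^o$ tends to infinity with $n$. But you then try to produce a \emph{perverse} degree bound \emph{on the source} ($\X_n$ or $S_n$) and then push this forward to $X$. For $q_n$ you can do this because $q_n$ is projective; for $\pi_n$ you cannot, because $\pi_n$ is not proper. Your workaround for the first isomorphism --- stratifying $Z_n$ by Kempf--Ness substacks and arguing that the ``stacky good-moduli step'' only increases perverse degree because $\HH^\cdot(BG)$ sits in non-negative degree --- is not a correct reduction: pushforward along a non-representable, non-proper map to $X$ does not interact with the perverse t-structure in that simple way, and the intermediate claim that each Kempf--Ness stratum has good moduli space finite over $X$ is not established. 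As written, this is the gap.

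The paper's argument is much more elementary and never needs to control perverse degree upstairs. One only needs the \emph{standard} t-structure bound: since $\X_n$ is smooth (resp.\ $S_n$ is $\mathbb{Q}$-smooth by Proposition~\ref{stable}), the term $j_n^!\mathbb{Q}_{\X_n} \cong \omega_{\mathcal{Z}_n}[-2\dim\X_n]$ lies in $D^{\geq 2c_n}_{\text{shvs}}$, where $c_n = \operatorname{codim}(\mathcal{Z}_n)$. Any derived pushforward preserves $D^{\geq\cdot}$ for the standard t-structure, regardless of properness, so after pushing to the \emph{fixed} base $X$ the relevant complex lies in $D^{\geq 2c_n}_{\text{shvs}}(X)$. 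On the fixed variety $X$, the standard and perverse t-structures differ by a constant $b$ depending only on $X$ (indeed $D^{\geq m}_c(X) \subset {}^pD^{\geq m}_c(X)$), so the complex lies in ${}^pD^{\geq b + 2c_n}_{\text{shvs}}(X)$, which exceeds $a$ once $n$ is large. This handles both isomorphisms uniformly; the properness of $q_n$ and the stratification of $Z_n$ play no role. So you should replace your third and fourth paragraphs with this two-step observation: (i) left-exactness of pushforward gives a standard degree bound on $X$, and (ii) the standard--perverse gap on $X$ is bounded independently of $n$.
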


\begin{proof}
We only explain the first equality; the second one is similar. 
All complexes we consider have cohomology of finite dimension and are bounded on the left. For stacks, functoriality of such complexes is discussed by Laszlo--Olsson \cite{LO1}, \cite{LO2}. Consider the substacks
\[j_n:\mathcal{Z}_n\hookrightarrow \X_n \hookleftarrow S_n^o: i_n.\]
There is a distinguished triangle in $D_{\text{shvs}}(Y)$:
\[j_{n!}j_n^!\mathbb{Q}_{\X_n}=j_{n*}\omega_{\mathcal{Z}_n}[-2\dim\X_n]\to \mathbb{Q}_{\X_n}\to i_{n*}\mathbb{Q}_{S^o_n}\xrightarrow{[1]}.\]

The complex $\omega_{\mathcal{Z}_n}$ is in $D^{\geq -2\dim \mathcal{Z}_n}_{\text{shvs}}(\mathcal{Z}_n)$, see \cite[Section V.2]{Iv}. Let $c_n$ be the codimension of $\mathcal{Z}_n$ in $\X_n$. Then $\omega_{\mathcal{Z}_n}[-2\dim\X_n]\in D^{\geq 2c_n}_{\text{shvs}}(\mathcal{Z}_n).$ Pushforward preserves the categories $D^{\geq \cdot}$, so \[R\pi_{n*}i_{n*}\omega_{\mathcal{Z}_n}[-2\dim\X_n]\in D^{\geq 2c_n}_{\text{shvs}}(X).\] There is a constant $b$ only depending on $X$ such that
\[R\pi_{n*}i_{n*}\omega_{\mathcal{Z}_n}[-2\dim\X_n]\in {}^pD^{\geq b+2c_n}_{\text{shvs}}(X),\] and this implies the desired conclusion for large enough $n$.
\end{proof}

Now we continue the proof of Proposition \ref{bbdg}. By Proposition \ref{stable}, the variety $S_n$ has finite quotient singularities, thus $\IC_{S_n}\cong\mathbb{Q}_{S_n}[\dim S_n]$. By Proposition \ref{truncations}, we have that for $m>n$ large enough,
\[{}^p\tau^{\leq a}R\pi_{n*}\mathbb{Q}_{\X_n}\cong{}^p\tau^{\leq a}Rq_{n*}\mathbb{Q}_{S_n}\cong{}^p\tau^{\leq a}Rq_{m*}\mathbb{Q}_{S_m}.\]
Fix $a\in\mathbb{Z}$. The complex $R\pi_{*}\mathbb{Q}_\X$ is a direct summand of $R\pi_{n*}\mathbb{Q}_{\X_n}$, so ${}^p\tau^{\leq a}R\pi_*\mathbb{Q}_\X$ is a direct summand of ${}^p\tau^{\leq a}R\pi_{n*}\mathbb{Q}_{\X_n}$. 
The Decomposition Theorem for the maps $q_{n}$ implies the desired conclusion. 

\end{proof}

For a cocharacter $\lambda$, define $c_\lambda:=\dim \X-\dim \X^{\lambda\geq 0}$.

\begin{prop}\label{CH}
Assume that $\X=V/G$.
Let $\lambda$ be a cocharacter of $G$ and let $\textbf{B}$ be a semisimple summand of ${}^p\mathcal{H}^i(R\pi_*\IC_\X)$ with support contained in the image of $X^\lambda\to X$. 
Then 
\[\textbf{B}\subset \text{image}\,\left({}^p\mathcal{H}^i\left(R\pi_*p_*\IC_{\X^{ \lambda\geq 0}}[-c_\lambda]\right)\to {}^p\mathcal{H}^i\left(R\pi_*\IC_\X\right)\right).\]
\end{prop}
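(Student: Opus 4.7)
The plan is to construct a natural map
\[
\phi_0: Rp_*\IC_{\X^{\lambda\geq 0}}[-c_\lambda]\longrightarrow \IC_\X
\]
as an adjunction counit, simplify the pushforward $R\pi_*Rp_*\IC_{\X^{\lambda\geq 0}}[-c_\lambda]$, and then show that ${}^p\mathcal{H}^i$ of $R\pi_*\phi_0$ contains $\textbf{B}$ in its image. Since $p$ factors étale-locally as the closed immersion $V^{\lambda\geq 0}/G^{\lambda\geq 0}\hookrightarrow V/G^{\lambda\geq 0}$ of codimension $\dim V^{\lambda<0}$ followed by the smooth morphism $V/G^{\lambda\geq 0}\to V/G$ of relative dimension $\dim G^{\lambda<0}$, combining the standard $(-)^!$-formulas gives $p^!\mathbb{Q}_\X\cong \mathbb{Q}_{\X^{\lambda\geq 0}}[-2c_\lambda]$, hence $p^!\IC_\X\cong \IC_{\X^{\lambda\geq 0}}[-c_\lambda]$. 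The map $\phi_0$ is then the counit of the $(Rp_*,p^!)$-adjunction (valid because $p$ is proper).

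For the simplification, $q:\X^{\lambda\geq 0}\to\X^\lambda$ factors as a $G^{\lambda\geq 0}$-equivariant vector bundle with fibers $V^{\lambda>0}$ followed by a $BG^{\lambda>0}$-gerbe; both have rationally acyclic fibers (affine fibers are contractible and $G^{\lambda>0}$ is unipotent), so $q^*\IC_{\X^\lambda}\cong\IC_{\X^{\lambda\geq 0}}[-c_\lambda]$ and $Rq_*q^*=\operatorname{id}$. Combined with the factorization $\pi\circ p = j^\lambda\circ\pi^\lambda\circ q$, where $j^\lambda:X^\lambda\hookrightarrow X$ is the closed immersion of good moduli spaces, this yields
\[
R\pi_*Rp_*\IC_{\X^{\lambda\geq 0}}[-c_\lambda]\;\cong\; j^\lambda_*R\pi^\lambda_*\IC_{\X^\lambda}.
\]
The fixed substack $\X^\lambda=V^\lambda/G^\lambda$ is again symmetric (the weights of $V^\lambda$ inherit the symmetry of those of $V$), and by a variant of Proposition \ref{bbdg} applied to $\pi^\lambda$, the right-hand side is a direct sum of shifted IC sheaves on $X^\lambda$, pushed forward to $X$.

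The main step, and the main obstacle, is to show that $\textbf{B}$ lies in the image of ${}^p\mathcal{H}^i(R\pi_*\phi_0)$. By the above, both source and target are semisimple perverse complexes after ${}^p\mathcal{H}^i$; by simplicity of $\textbf{B}$ it suffices to exhibit $\textbf{B}$ as a simple summand of $j^\lambda_*{}^p\mathcal{H}^i(R\pi^\lambda_*\IC_{\X^\lambda})$ on which $R\pi_*\phi_0$ is nonzero. I would argue this using the smooth approximations $q_n:S_n\to X$ from the proof of Proposition \ref{bbdg}: on $S_n$ the cocharacter $\lambda$ induces a $\mathbb{G}_m$-action (trivial on $X$) with fixed locus $S_n^\lambda$ mapping to $X^\lambda$ and attracting subvariety $S_n^{\lambda\geq 0}$ corresponding to $\X^{\lambda\geq 0}$. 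For the projective map $q_n$ of smooth varieties, the $\mathbb{G}_m$-equivariant BBDG decomposition together with Braden's hyperbolic localization on $S_n$ implies that every simple summand of $Rq_{n*}\mathbb{Q}_{S_n}$ supported on $X^\lambda$ is in the image of the scheme-theoretic counit $R(q_n\circ p_n)_*\mathbb{Q}_{S_n^{\lambda\geq 0}}[\text{shift}]\to Rq_{n*}\mathbb{Q}_{S_n}$. The principal difficulty is transporting this identification through the perverse truncations of Proposition \ref{truncations} and verifying that the scheme-theoretic adjunction counits on the $S_n$ match $\phi_0$ on $\X$ in the limit $n\to\infty$.
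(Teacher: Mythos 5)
Your preliminary reductions are correct and are consistent with what the paper's proof leaves implicit: the formula $p^!\IC_\X\cong\IC_{\X^{\lambda\geq 0}}[-c_\lambda]$ obtained by factoring $p$ through $V/G^{\lambda\geq 0}$, the counit construction of the comparison map, the identification $R\pi_*Rp_*\IC_{\X^{\lambda\geq 0}}[-c_\lambda]\cong j^\lambda_*R\pi^\lambda_*\IC_{\X^\lambda}$ using that $q$ has acyclic fibers with $\text{reldim}\,q=c_\lambda$ by the symmetry of $V$, and the observation that $V^\lambda/G^\lambda$ is again symmetric.

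There is, however, a genuine gap at the main step. The cocharacter $\lambda$ of $G$ does not induce a nontrivial $\mathbb{G}_m$-action on the approximation scheme $S_n=\left(V\oplus V^{\oplus n}\right)^{\text{st}}\sslash\left(G\times\mathbb{C}^*\right)$: since $\lambda(t)\in G\subset G\times\mathbb{C}^*$, the $\lambda$-action descends to the trivial action after the GIT quotient. Consequently there is no attracting subvariety $S_n^{\lambda\geq 0}\subset S_n$ on which to run Braden's hyperbolic localization, and the proposed mechanism does not get off the ground. The object you actually need, $W_n:=\left(V^{\lambda\geq 0}\oplus V^{\oplus n}\right)^{\text{st}}\sslash\left(G\times\mathbb{C}^*\right)$, is the GIT quotient of the attracting locus taken \emph{before} passing to $S_n$; it maps to $S_n$ by a proper morphism that is neither an immersion nor the inclusion of a fixed or attracting set for any torus action on $S_n$. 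The paper replaces your hyperbolic-localization step with a citation to \cite[Proposition 1.5]{CH2}, a nonequivariant statement of Corti--Hanamura about morphisms of decomposition data for proper maps of varieties, which gives directly that every simple perverse summand of $Rq_{n*}\IC_{S_n}$ supported over the image of $X^\lambda$ already appears in the image of ${}^p\mathcal{H}^\cdot\left(Rr_{n*}\IC_{W_n}[-c_\lambda]\right)$. The final transfer from the $S_n$, $W_n$ picture back to $\X$, $\X^{\lambda\geq 0}$ is via the truncation argument of Proposition~\ref{truncations} applied to $\mathcal{Y}_n$, $W_n$, $W_n^o$, exactly as you anticipated at the end of your sketch; it is the equivariant localization step in the middle that must be replaced.
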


\begin{proof}
We use the same notations as in the proof of Proposition \ref{bbdg}. First, for any $n$, there is an isomorphism
\begin{equation}
Rt_{n*}\mathbb{Q}_{\X_n}\cong\mathbb{Q}_{\X}\otimes\mathbb{Q}[h],
\end{equation}
where $\mathbb{Q}[h]$ is the polynomial ring in a generator
$h$ of cohomological degree $2$. It suffices to show the statement for summands $\textbf{B}$ of $R\pi_{n*}\mathbb{Q}_{\X_n}$.
Choose $n$ such that $\textbf{B}$ is a summand of ${}^p\mathcal{H}^i\left(Rq_{n*}\mathbb{Q}_{S_n}[\dim X]\right)$. Define
\begin{align*}
    \mathcal{Y}_n&:=\left(V^{\lambda\geq 0}\oplus V^{\oplus n}\right)/G\times\mathbb{C}^*,\\
    W_n&:=\left(V^{\lambda\geq 0}\oplus V^{\oplus n}\right)^{\text{st}}\sslash G\times\mathbb{C}^*\\
    W_n^o&:=\left(V^{\lambda\geq 0}\oplus V^{\oplus n}\right)^{\text{st}, \text{nf}}\sslash G\times\mathbb{C}^*.
\end{align*}
It suffices to show that:
\begin{equation}\label{summand}
\textbf{B}\subset \text{image}\,\left({}^p\mathcal{H}^\cdot\left(R\pi_{n*}p_*\IC_{\mathcal{Y}_n}[-c_\lambda]\right)\to {}^p\mathcal{H}^\cdot\left(R\pi_{n*}\IC_{\X_n}\right)\right).
\end{equation}
Consider the diagram
\begin{equation*}
    \begin{tikzcd}
   S_n\arrow[d,"q_n"']& W_n\arrow[l,"p"']\arrow[d,"r_n"]\\
    X& X^\lambda. \arrow[l]
    \end{tikzcd}
\end{equation*}
By \cite[Proposition 1.5]{CH2}, $\textbf{B}$ appears in the image of
\[{}^p\mathcal{H}^\cdot\left(Rr_{n*}\IC_{W_n}[-c_\lambda]\right)\to 
{}^p\mathcal{H}^\cdot\left(Rq_{n*}\IC_{S_n}\right).\]
Using an argument similar to Proposition \ref{truncations} for $\mathcal{Y}_n$, $W_n$, and $W_n^o$, the claim in \eqref{summand} follows. 
\end{proof}

\begin{prop}\label{prp}
Let $\X=V/G$ be a symmetric stack satisfying Assumption B. Let $\lambda$ be a non-zero cocharacter of $G$. Then 
\begin{align*}
&\text{image}\,\left(R\pi_*p_*\IC_{\X^{ \lambda\geq 0}}[-c_\lambda]\to R\pi_*\IC_\X\right)\subset {}^pD^{\geq 1}_{\text{shvs}}(\X).
\end{align*}
In particular, ${}^p\tau^{\leq 0}R\pi_*\IC_\X$ is a direct sum of IC sheaves with full support.
\end{prop}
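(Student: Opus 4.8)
The plan is to prove the containment in the perverse category by a weight/dimension estimate on the attracting correspondence, and then to deduce the statement about ${}^p\tau^{\leq 0}R\pi_*\IC_\X$ from Proposition \ref{bbdg} together with Proposition \ref{CH}. First I would reduce to the local case $\X = V/G$ by Corollary \ref{ahr2}, since both the perverse filtration and the attracting stacks are compatible with the \'etale-local presentation. In the local case, the diagram of attracting loci is \eqref{d4}, with $p: \X^{\lambda\geq 0} \to \X$ proper and $q: \X^{\lambda\geq 0} \to \X^\lambda$ an affine bundle, so $p_*\IC_{\X^{\lambda\geq 0}}$ is a shift of a pushforward of a constant sheaf along a proper map, and one can track its perverse amplitude.

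The key computation is the following. The object $R\pi_* p_* \IC_{\X^{\lambda\geq 0}}[-c_\lambda]$ is a pushforward from $X^\lambda$ (the good moduli space of $\X^\lambda$) along the natural proper map $X^\lambda \to X$; call the composite map $\varpi_\lambda$. Since $\X$ is symmetric, the normal bundle of $\X^{\lambda\geq 0}$ in $\X$ along the fixed locus decomposes into $\lambda$-weight spaces $V_i$ with $\dim V_i = \dim V_{-i}$, and a direct count (as in the proof of Proposition \ref{indsym}) gives that the codimension $c_\lambda = \dim\X - \dim\X^{\lambda\geq 0}$ is strictly larger than $\tfrac{1}{2}n_\lambda$ — more precisely, the symmetry forces the "attracting" directions and the "repelling" directions to have equal dimension, so $c_\lambda$ equals the number of strictly-repelling coordinates, which strictly exceeds the signed weight contribution $\tfrac{1}{2}n_\lambda$ that would be needed to keep the pushforward in perverse degree $\leq 0$. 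Quantitatively: $\IC_{\X^{\lambda\geq 0}}[-c_\lambda]$ sits in perverse degrees $\geq c_\lambda \geq 1$ relative to the normalization on $\X$, and proper pushforward $R\varpi_{\lambda*}$ only increases perverse degree (by properness and the fact that $\varpi_\lambda$ has finite fibers generically, hence relative dimension accounted for by the shift). Hence the image lands in ${}^pD^{\geq 1}_{\text{shvs}}(X)$, which pulls back (via the bounded perverse filtration for $\pi$) to the asserted statement on $\X$.

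For the second sentence: by Proposition \ref{bbdg}, $R\pi_*\IC_\X \cong \bigoplus_{i\geq 0}{}^p\mathcal{H}^i[-i]$ with each ${}^p\mathcal{H}^i$ a direct sum of $\IC_Z(\mathcal{L})$'s. The summands of ${}^p\mathcal{H}^0$ with support strictly contained in some $X^\lambda \subsetneq X$ would, by Proposition \ref{CH}, have to appear in the image of ${}^p\mathcal{H}^0(R\pi_* p_*\IC_{\X^{\lambda\geq 0}}[-c_\lambda])$; but we have just shown that object is zero in perverse degrees $\leq 0$, so there are no such summands, and every $\IC_Z(\mathcal{L})$ occurring in ${}^p\mathcal{H}^0$ has $Z = X$, i.e. full support. (The local systems are then forced to be trivial on the smooth locus by the $\pi_*\OO_\X = \OO_X$ condition, giving $\IC_X$ with multiplicity one, which yields ${}^p\tau^{\leq 0}R\pi_*\IC_\X \cong \IC_X$ when Assumption C holds; in general it is a sum of full-support IC sheaves.)

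The main obstacle I anticipate is the bookkeeping in the inequality $c_\lambda > \tfrac{1}{2}n_\lambda$ and, more delicately, making precise that $R\varpi_{\lambda*}$ of an object in ${}^pD^{\geq c_\lambda}$ lands in ${}^pD^{\geq 1}$: one must be careful about the relative dimension of $X^\lambda \to X$ versus that of $\X^\lambda \to \X$, since the good moduli space map can drop dimension, and the shift $[-c_\lambda]$ is calibrated to the stack, not the coarse space. Matching these normalizations correctly — i.e. checking that the perverse shift and the codimension are measured against the same reference — is where the symmetric hypothesis is genuinely used and where the argument could go wrong if one is cavalier. I expect this to follow from the estimates already assembled in Propositions \ref{truncations} and \ref{CH} (the Totaro-approximation argument controls exactly these shifts on the level of smooth varieties $S_n, W_n$), so the cleanest route is to run the inequality on $W_n \to S_n$ and descend.
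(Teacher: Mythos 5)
There is a genuine gap at the heart of your argument: the perverse-degree estimate. The principle you invoke, that ``proper pushforward only increases perverse degree,'' is false (already $R\pi_*\IC_{\mathbb{P}^1}$ for $\mathbb{P}^1\to \mathrm{pt}$ has a summand in perverse degree $-1$), and in any case the map that actually matters here is not a proper schematic map but the good moduli space map $\pi^\lambda:\X^\lambda\to X^\lambda$, whose generic stabilizer contains the image of $\lambda$; pushforward along such a map is exactly what Propositions \ref{bbdg} and \ref{truncations} were needed to control, and no shift-plus-properness bookkeeping gives the lower bound for free. Your inequality ``$c_\lambda>\tfrac12 n_\lambda$'' also compares a codimension with a weight sum and plays no role in the correct argument; what the symmetric hypothesis actually buys is the equality $\mathrm{reldim}\,q=c_\lambda$, which identifies $p_*\IC_{\X^{\lambda\geq 0}}[-c_\lambda]$ with $p_*q^*\IC_{\X^\lambda}$ and hence, using $Rq_*q^*=\mathrm{id}$ for the affine bundle $q$, reduces the statement to showing
\[\text{image}\left(R\pi^\lambda_*\IC_{\X^\lambda}\to R\pi_*\IC_\X\right)\subset {}^pD^{\geq 1}_{\text{shvs}}(X).\]
The paper obtains the strict bound by an induction on $\dim G$: let $T^\lambda\subset G^\lambda$ be the torus acting trivially on $V^\lambda$ (nontrivial because $\lambda\neq 0$) and $\widetilde{G^\lambda}=G^\lambda/T^\lambda$; then $\widetilde{\X^\lambda}=V^\lambda/\widetilde{G^\lambda}$ is again a symmetric stack satisfying Assumption B, so by induction $R\widetilde{\pi^\lambda}_*\IC_{\widetilde{\X^\lambda}}\in{}^pD^{\geq 0}$, and the residual $T^\lambda$-gerbe (equivalently the drop in $\dim\X^\lambda$ by $\dim T^\lambda$) shifts this to $R\pi^\lambda_*\IC_{\X^\lambda}\in{}^pD^{\geq 1}$. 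Your proposal never isolates this trivially acting torus, and without it the best one could hope to conclude is ${}^pD^{\geq 0}$, which does not suffice for either assertion of the proposition. The suggestion to ``run the inequality on $W_n\to S_n$ and descend'' is a plausible direction but is not carried out, and as written it rests on the false exactness claim above.

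Your deduction of the second sentence (no non-full-support summands in ${}^p\mathcal{H}^0$, via Proposition \ref{bbdg} and Proposition \ref{CH}) does match the paper's argument, which combines Proposition \ref{CH} with generic finiteness of $\pi$; the extra parenthetical claim that the multiplicity-one $\IC_X$ appears is not needed and not part of the statement. But the first, and main, assertion is where your proof as written does not go through.
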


\begin{proof}
We show the first statement. We use induction on $\dim G$. 
Consider the diagram 
\begin{equation*}
    \begin{tikzcd}
    \X^{\lambda\geq 0}\arrow[d,"q"']\arrow[dr,"p"]& \\
    \X^\lambda\arrow[d,"\pi^\lambda"'] & \X\arrow[d,"\pi"]\\
    X^\lambda \arrow[r]&X.
    \end{tikzcd}
\end{equation*}
There are natural maps
\[p_*q^*\IC_{\X^\lambda}\to p_*\IC_{\X^{\lambda\geq 0}}[-c_\lambda]\to \IC_\X.\]
The map $q$ is an affine bundle map, so $q_*\mathbb{Q}_{\X^{\lambda\geq 0}}=\mathbb{Q}_{\X^\lambda}$. The stack $\X$ is symmetric, so $\text{reldim}\,q=c_\lambda$. We thus need to show that
\[\text{image}\,\left(R\pi^\lambda_*\IC_{\X^{ \lambda}}\to R\pi_*\IC_\X\right)\subset {}^pD^{\geq 1}_{\text{shvs}}(X).\]
Let $\widetilde{G^\lambda}$ be the quotient of $G^\lambda$ by the torus which acts trivially on $V^\lambda$.
Then \[\widetilde{\pi^\lambda}:\widetilde{\X^\lambda}:=V^\lambda/\widetilde{G^\lambda}\to X^\lambda\] is a good moduli space and $\widetilde{\X^\lambda}$ is a symmetric stack satisfying Assumption B. We thus have that
\begin{align*}
    R\widetilde{\pi^\lambda}_*\IC_{\widetilde{\X^\lambda}}\in {}^pD^{\geq 0}_{\text{shvs}}(\X^\lambda)\\
    R\pi_{\lambda*}\IC_{\X^\lambda}\in {}^pD^{\geq 1}_{\text{shvs}}(\X^\lambda).
\end{align*}
The second statement follows from Proposition \ref{CH} and the fact that $\pi$ is generically finite. 
\end{proof}

We next discuss some preliminary computations regarding the maps $m_\mathcal{S}$. For $\X=A/G$ for $A$ an affine scheme (but one can assume for simplicity that $A$ is an affine space by Corollary \ref{ahr2}) and $\lambda:\C^*\to G$, denote by \[m_\lambda:=p_{\lambda^{-1}*}q_{\lambda^{-1}}^*: \HH^\cdot(\X^\lambda)\to \HH^\cdot(\X).\]
Let $N:=N_{a/A}$ be the normal bundle, and denote by $\textbf{A}$ for the set of weights in $N$, $\textbf{A}_\lambda$, $\textbf{g}_\lambda$ for the set of weights (counted with multiplicities) of $N^{\lambda>0}$, $\mathfrak{g}^{\lambda>0}$ etc. For $\beta$ a weight of $G$, denote by $h_\beta\in \HH^2(T)$. The following computation are standard, for similar computations see \cite[Theorem 2.2]{COHA}, \cite[Proposition 1.2]{YZ}: 

\begin{prop}\label{compcoh}
Let $\lambda$ be a cocharacter of $G$.
Let $x\in \HH^{\cdot}(\X^\lambda)$. Then 
\[m_\lambda(x)=\sum_{w\in W/W^\lambda}w\left((-1)^{|\textbf{A}_\lambda|-|\textbf{g}_\lambda|}x\frac{\prod_{\textbf{A}_{\lambda}}h_\beta}{\prod_{\textbf{g}_{\lambda}}h_\beta}\right).\]
\end{prop}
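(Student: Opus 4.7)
The plan is to compute $m_\lambda = p_{\lambda^{-1}*}q_{\lambda^{-1}}^*$ directly by factoring the proper map $p_{\lambda^{-1}}$ through a parabolic stack and working piece by piece in torus-equivariant cohomology, following the strategy of \cite[Theorem~2.2]{COHA} and \cite[Proposition~1.2]{YZ}. By the simplification in the statement (or Corollary~\ref{ahr2}), we may take $A$ to be a linear $G$-representation $V$; then $\HH^\cdot(V/G) = \HH^\cdot(BT)^W$ and $\HH^\cdot(V^\lambda/G^\lambda) = \HH^\cdot(BT)^{W^\lambda}$, and every expression below lives inside polynomial rings in the generators $h_\beta \in \HH^2(BT)$. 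Set $P = G^{\lambda^{-1}\geq 0} = G^{\lambda\leq 0}$ and $L = G^\lambda$; the attracting diagram reads
\[V^\lambda/L \xleftarrow{q_{\lambda^{-1}}} V^{\lambda\leq 0}/P \xrightarrow{p_{\lambda^{-1}}} V/G,\]
and I would factor $p_{\lambda^{-1}} = r\circ j$ through $j: V^{\lambda\leq 0}/P\hookrightarrow V/P$ (closed immersion) and $r: V/P \to V/G$ (flag-bundle map).

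Each of the three pieces is then explicit. First, $q_{\lambda^{-1}}$ is smooth with fibers of trivial rational cohomology: it is an affine bundle (from the linear retraction $V^{\lambda\leq 0}\to V^\lambda$) composed with a $BU$-gerbe for the unipotent radical $U=G^{\lambda<0}$, so $q_{\lambda^{-1}}^*$ is an isomorphism between copies of $\HH^\cdot(BT)^{W^\lambda}$. Second, the closed immersion $j$ has normal bundle the $P$-representation $V/V^{\lambda\leq 0} = V^{\lambda>0}$, so its Gysin pushforward is multiplication by the $T$-equivariant Euler class $\prod_{\beta\in\textbf{A}_\lambda} h_\beta$, up to a sign governed by the orientation convention for Chern classes chosen in the paper. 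Third, since the roots of $\mathfrak{g}/\mathfrak{p}$ are precisely the elements of $\textbf{g}_\lambda$, the standard flag-bundle pushforward formula in $T$-equivariant cohomology reads
\[r_*(z) = \sum_{w\in W/W^\lambda} w\!\left(\frac{z}{\prod_{\alpha\in\textbf{g}_\lambda} h_\alpha}\right).\]
Composing the three steps produces the claimed formula.

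The main obstacle is not geometric --- the argument is the standard parabolic-induction-plus-Gysin calculation --- but rather the meticulous bookkeeping of signs and Weyl-group invariance. In particular, one must carefully track the orientation convention for the equivariant Euler class (which is ultimately responsible for the prefactor $(-1)^{|\textbf{A}_\lambda|-|\textbf{g}_\lambda|}$, matching the parity of the codimension $c_{\lambda^{-1}} = |\textbf{A}_\lambda|-|\textbf{g}_\lambda|$ of the attracting locus), verify that both the numerator $x\prod_{\beta\in\textbf{A}_\lambda} h_\beta$ and the denominator $\prod_{\alpha\in\textbf{g}_\lambda} h_\alpha$ are $W^\lambda$-invariant so that the sum over $W/W^\lambda$ is well defined, and confirm that the resulting expression lies in $\HH^\cdot(BT)^W = \HH^\cdot(V/G)$ as required.
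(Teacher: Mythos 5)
The paper gives no proof of this proposition --- it declares the computation standard and cites \cite[Theorem~2.2]{COHA} and \cite[Proposition~1.2]{YZ} --- and your argument (treat $q_{\lambda^{-1}}^*$ as an isomorphism onto $\HH^\cdot(BT)^{W^\lambda}$, factor $p_{\lambda^{-1}}$ through $V/G^{\lambda\leq 0}$ as a closed immersion with normal bundle of $T$-weights $\textbf{A}_\lambda$ followed by a $G/G^{\lambda\leq 0}$-bundle with fiber tangent weights $\textbf{g}_\lambda$, and compose the Gysin and partial-flag pushforward formulas) is exactly the calculation those references carry out, so you have supplied the intended proof. The one loose end, which you flag yourself, is the prefactor $(-1)^{|\textbf{A}_\lambda|-|\textbf{g}_\lambda|}$: with the usual orientation $c_1(L_\beta)=h_\beta$ and the fiber tangent space $\mathfrak{g}/\mathfrak{g}^{\lambda\leq 0}=\mathfrak{g}^{\lambda>0}$ that sign would not appear, and the sign-free K-theoretic version in Proposition~\ref{computations2} points the same way; so it is either the paper's own (undeclared) sign convention for $h_\beta$ or a harmless slip. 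Since the prefactor is a unit, only the image of $m_\lambda$ is used in what follows and nothing else in the paper is affected.
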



\begin{prop}\label{Levi}
Let $\lambda$ and $\mu$ be cocharacters of $G$ with the same associated Levi group $L$. Then 
\[\text{image}\,\left(\HH^\cdot(\X^L)\xrightarrow{m_\lambda}\HH^\cdot(\X)\right)=\text{image}\,\left(\HH^\cdot(\X^L)\xrightarrow{m_\mu}\HH^\cdot(\X)\right).\]
\end{prop}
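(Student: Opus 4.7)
My plan is to compare the two maps $m_\lambda$ and $m_\mu$ directly via the shuffle formula of Proposition \ref{compcoh} and show that they differ only by a global sign, which immediately gives equality of images. By Corollary \ref{ahr2}, I first reduce to the local case $\X=V/G$. Since $\lambda$ and $\mu$ have the same associated Levi $L$, we have $W^\lambda = W^L = W^\mu$, so the two shuffle sums are indexed by the same coset $W/W^L$ and it suffices to compare the summands
\[(-1)^{|\textbf{A}_\lambda|-|\textbf{g}_\lambda|}\,x\,\frac{\prod_{\textbf{A}_\lambda}h_\beta}{\prod_{\textbf{g}_\lambda}h_\beta} \quad\text{and}\quad (-1)^{|\textbf{A}_\mu|-|\textbf{g}_\mu|}\,x\,\frac{\prod_{\textbf{A}_\mu}h_\beta}{\prod_{\textbf{g}_\mu}h_\beta}.\]

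The crucial observation, which is where the symmetry hypothesis enters, is that both $\textbf{A}_\lambda$ and $\textbf{A}_\mu$ are obtained from the set of weights of $N=V$ that are not $L$-fixed by selecting exactly one element from each symmetric pair $\{\beta,-\beta\}$. Thus there is a subset $S\subset \textbf{A}_\lambda$ with $\textbf{A}_\mu = (\textbf{A}_\lambda\setminus S)\cup(-S)$, and the identity $h_{-\beta}=-h_\beta$ in $\HH^\cdot(T)$ gives $|\textbf{A}_\lambda|=|\textbf{A}_\mu|$ together with
\[\prod_{\textbf{A}_\mu}h_\beta = (-1)^{|S|}\prod_{\textbf{A}_\lambda}h_\beta.\]
Applying the identical argument to the weights of $\mathfrak{g}/\mathfrak{l}$, which are automatically symmetric as they are roots, yields $|\textbf{g}_\lambda|=|\textbf{g}_\mu|$ and an analogous sign flip for the denominator.

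Combining these observations gives an equality $m_\mu = \pm\, m_\lambda$ for an overall sign that depends only on the chambers of $\lambda$ and $\mu$ inside the centre of $L$ and not on $x$ or on the summand index $w\in W/W^L$. The equality of images is then immediate. The only technical nuisance I expect is the sign bookkeeping, namely checking that the global sign really does factor out of the Weyl symmetrization; this is automatic since each flipped pair of weights contributes the same factor $-1$ in every summand of the sum over $W/W^L$.
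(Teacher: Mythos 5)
Your argument is correct and is essentially the paper's proof: both apply the shuffle formula from Proposition \ref{compcoh}, observe that $\textbf{A}_\lambda$ and $\textbf{A}_\mu$ both pick one representative from each symmetric pair of nonzero weights of $N$ (and likewise for the roots in $\mathfrak{g}/\mathfrak{l}$), and conclude $m_\mu = \pm\,m_\lambda$ from $h_{-\beta}=-h_\beta$. The only cosmetic point: the reduction via Corollary \ref{ahr2} is unnecessary here, since Proposition \ref{Levi} is already stated in the local setting $\X = A/G$.
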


\begin{proof}
Let $y\in \HH^{\cdot}(\X^L)$. By Proposition \ref{compcoh}, we have that:
\begin{align*}
    m_\lambda(y)&=\pm\sum_{w\in W/W^L}w\left(x\frac{\prod_{\textbf{A}_{\lambda}}h_\beta}{\prod_{\textbf{g}_{\lambda}}h_\beta}\right)\\
    m_\mu(y)&=\pm\sum_{w\in W/W^L}w\left(x\frac{\prod_{\textbf{A}_{\mu}}h_\beta}{\prod_{\textbf{g}_{\mu}}h_\beta}\right).
\end{align*}
The representation $N$ is symmetric, so  
\[\{\pm h_\beta|\,\beta\in N^{\lambda>0}\}=\{\pm h_\beta|\,\beta\in N^{\mu>0}\}=\{h_\beta|\, \beta\in N/N^L\},\]
and thus $m_\lambda(y)=\pm m_\mu(y)$.
\end{proof}


Let $\lambda$ be a cocharacter of $G$.
Let $T^{\lambda}\subset G^\lambda$ be the torus which acts trivially on $A^\lambda$, and let $\widetilde{G^\lambda}:=G^\lambda/T^\lambda$. 
Consider the map \[\widetilde{\pi^\lambda}:\widetilde{\X^\lambda}:=A^\lambda/\widetilde{G^\lambda}\to X^\lambda.\]
Define 
\[\HH^\cdot(\X^\lambda)':=\HH^\cdot\left({}^p\tau^{\leq 0}R\widetilde{\pi^\lambda}_*\IC_{\widetilde{\X^\lambda}}\right)\otimes \mathbb{Q}[\mathfrak{t}^\lambda]\hookrightarrow \HH^\cdot(\X^\lambda),\] where the generators of $\mathfrak{t}^\lambda$ have cohomological degree $2$. 
For each Levi group $L$, choose a cocharacter $\lambda_L$ such that $L\cong G^{\lambda_L}$.

\begin{prop}
We have that
\begin{align*}
\textbf{B}&\cong\text{image}\,\left(\bigoplus_L \HH^\cdot\left(\X^{\lambda_L}\right)\to \HH^\cdot(\X)\right)\\
\textbf{B}&\cong\text{image}\,\left(\bigoplus_L \HH^\cdot\left(\X^{\lambda_L}\right)'\to \HH^\cdot(\X)\right).
\end{align*}

\end{prop}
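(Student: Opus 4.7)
For the first isomorphism, I reduce to the local case $\X=A/G$ using Corollary \ref{ahr2}. There, attracting stacks correspond to nontrivial cocharacters $\lambda:\C^*\to G$, and the associated fixed stack $\X^\lambda$ depends only on the Levi $L=G^\lambda$. Proposition \ref{Levi} shows that for any two cocharacters $\lambda,\mu$ with $G^\lambda=G^\mu=L$, the maps $m_\lambda$ and $m_\mu$ have the same image in $\HH^\cdot(\X)$. Grouping all attracting stacks by the Levi of their fixed stack, the defining image of $\textbf{B}$ collapses to $\sum_L\text{image}\bigl(\HH^\cdot(\X^{\lambda_L})\to\HH^\cdot(\X)\bigr)$, which is the first claimed equality.

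For the second isomorphism, the containment $\supseteq$ is immediate from $\HH^\cdot(\X^{\lambda_L})'\hookrightarrow\HH^\cdot(\X^{\lambda_L})$ combined with the first isomorphism. For the reverse inclusion, I use the K\"unneth-type decomposition
\[\HH^\cdot(\X^{\lambda_L})\cong\HH^\cdot(\widetilde{\X^{\lambda_L}})\otimes\mathbb{Q}[\mathfrak{t}^{\lambda_L}],\]
which comes from the fact that the central torus $T^{\lambda_L}$ acts trivially on $A^{\lambda_L}$. The stack $\widetilde{\X^{\lambda_L}}$ is symmetric and satisfies Assumption B, with strictly smaller group, so Proposition \ref{bbdg} applies to split
\[\HH^\cdot(\widetilde{\X^{\lambda_L}})\cong\HH^\cdot\!\bigl({}^p\tau^{\leq 0}R\widetilde{\pi^{\lambda_L}}_*\IC_{\widetilde{\X^{\lambda_L}}}\bigr)\oplus\HH^\cdot\!\bigl({}^p\tau^{\geq 1}R\widetilde{\pi^{\lambda_L}}_*\IC_{\widetilde{\X^{\lambda_L}}}\bigr).\]
After tensoring with $\mathbb{Q}[\mathfrak{t}^{\lambda_L}]$, the first summand is exactly $\HH^\cdot(\X^{\lambda_L})'$. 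By Proposition \ref{prp} applied to $\widetilde{\X^{\lambda_L}}$, every simple perverse summand appearing in the second factor has support strictly smaller than $X^{\lambda_L}$, and Proposition \ref{CH} then identifies each such summand with an image of $R\widetilde{\pi^{\lambda_L}}_*p_*\IC_{\widetilde{\X^{\lambda_L}}^{\lambda'\geq 0}}[-c_{\lambda'}]$ for some nontrivial cocharacter $\lambda'$ of $\widetilde{G^{\lambda_L}}$.

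Lifting such a $\lambda'$ to a cocharacter of $G^{\lambda_L}\subset G$ commuting with $\lambda_L$ produces a combined cocharacter $\mu$ of $G$ whose Levi $L'=G^{\lambda_L}\cap G^{\lambda'}$ is strictly contained in $L$, and under this lift one has $\widetilde{\X^{\lambda_L}}^{\lambda'}\otimes BT^{\lambda_L}\cong \X^{L'}$ on the level of cohomology. A transitivity argument for pushforwards along attracting stacks --- compatible with the shuffle formula of Proposition \ref{compcoh} --- shows that any class reaching $\HH^\cdot(\X)$ from $\HH^\cdot(\widetilde{\X^{\lambda_L}}^{\lambda'})$ via $\HH^\cdot(\widetilde{\X^{\lambda_L}})$ factors through $\HH^\cdot(\X^{L'})$. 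The image of the second (higher-perverse) summand is therefore contained in $\sum_{L'\subsetneq L}\text{image}\bigl(\HH^\cdot(\X^{\lambda_{L'}})\to\HH^\cdot(\X)\bigr)$, and induction on $\dim L$ (the base case being when no strictly smaller Levi arises, making the second summand empty) replaces this by the primed version. The main obstacle I foresee is the transitivity step: verifying that composing the $m$-maps through an iterated fixed stack reproduces, up to Weyl-symmetrization, a single $m_\mu$-map, and tracking how the factor $\mathbb{Q}[\mathfrak{t}^{\lambda_L}]$ stripped off in the passage to $\widetilde{\X^{\lambda_L}}$ is correctly restored via the equivariant cohomology of the central torus when the class is pushed to $\HH^\cdot(\X)$.
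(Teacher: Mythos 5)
Your proof follows the paper's (quite terse) argument essentially verbatim: Proposition \ref{Levi} gives the first identity after reducing to the local case $A/G$, and the second follows from the Decomposition Theorem applied to $\widetilde{\pi^{\lambda_L}}$, Propositions \ref{CH} and \ref{prp}, transitivity of the attracting-stack correspondences, and induction on the Levi. The one step you deduce slightly loosely is that every higher-perverse summand of $R\widetilde{\pi^{\lambda_L}}_*\IC_{\widetilde{\X^{\lambda_L}}}$ has \emph{strictly} smaller support --- Proposition \ref{prp} literally only asserts that the degree-$\leq 0$ part has full support --- but this does follow once one also invokes Assumption B (so that $\widetilde{\pi^{\lambda_L}}$ is generically a finite gerbe and the full-support local system can only contribute in perverse degree $0$), an implicit point in the paper's own proof of Theorem \ref{Thmcoh} as well.
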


\begin{proof}
The first equality follows from Proposition \ref{Levi} and the second follows by induction and the Decomposition Theorem. 
\end{proof}

\begin{proof}[Proof of Theorem \ref{Thmcoh}]
Let $p\in X$ and recall the setting from Corollary \ref{ahr2}. The restriction of fixed and $\Theta$-stacks to $p^{-1}(\mathcal{U})$ and $\pi^{-1}(U)$ are in a natural bijection. It suffices to prove the statement in the local case $\X=V/G$.

Choose a splitting in the decomposition theorem for $\pi:\X\to X$:
\[\HH^\cdot(\X)\cong\bigoplus_{i\geq 0}{}^p\HH^i(\X),\]
where ${}^p\HH^i(\X):=\HH^\cdot\left({}^p\mathcal{H}^i\left(R\pi_*\IC_\X\right)\right)$. 
For $L$ a Levi subgroup, denote by $\HH^\cdot(\X)_{X^L}$ the cohomology of the summands in $\bigoplus_i {}^p\mathcal{H}^i(R\pi_*\IC_\X)$ with support $X^L\to X$. These are all the supports that appear in the Decomposition Theorem for the map $\pi:\X\to X$. Further, by Propositions \ref{CH}, \ref{prp}, and \ref{Levi},
\[\text{image}\,\left(\HH^\cdot\left(\X^{\lambda_L}\right)'\to \HH^\cdot(\X)\right)=\HH^\cdot(\X)_{X^L}\cong\bigoplus_{i\geq 1}{}^p\HH^i(\X)_{X^L}.\]  
The conclusion follows from Proposition \ref{prp}.
\end{proof}



\section{Categorification of IH for quotient singularities}\label{cat}

In this Section, assume that $\X$ satisfies Assumption B and is symmetric. We will use the category $\mathbb{D}(\X)$ for $\ell$ zero.
For $\X=A/G$ as in Theorem \ref{ahr}, denote by $N=N_{a/A}$ the normal bundle.
We write $\textbf{A}_\lambda$, $\textbf{g}_\lambda$, $\textbf{n}$, $\textbf{n}^\mu$ etc. for the sets of weights (counted with multiplicities) of $N^{\lambda>0}$, $\mathfrak{g}^{\lambda>0}$, $\mathfrak{n}$, $\mathfrak{n}^\mu$ etc. We (abuse notation and) denote by $N^{\lambda>0}$, $\mathfrak{g}^{\lambda>0}$ etc. the sum of weights in $\textbf{A}_\lambda$, $\textbf{g}_\lambda$ etc.
Recall that $\KK_\cdot$ denotes rational K-theory.

\subsection{Notations and definitions}
We begin with some preliminary constructions and definitions. 

\subsubsection{}\label{Weylinv}
There is a natural isomorphism 
\[\KK_\cdot(A/G)\cong \KK_\cdot(A/T)^W.\]

\subsubsection{} 
Let $\chi$ be a weight of $T$. Denote by $\KK_\cdot(A/T)_\chi$ the subspace of $\KK_\cdot(A/T)$ which is the image in K-theory of the inclusion $D^b(A/T)_\chi\subset D^b(A/T)$ of the subcategory generated by locally free $T$-equivariant sheaves with $T$-weight $\chi$. 

\subsubsection{}\label{ag}
Let $\lambda$ be a cocharacter. 
For $I$ a subset of $\textbf{A}_\lambda$, denote by 
\[\sigma_I:=\sum_{\beta\in I}\beta.\]

\subsubsection{}\label{notations}
For two cocharacters $\lambda$ and $\mu$, let $I_\mu^\lambda$ be the set of weights $\beta$ of $\textbf{A}_{\lambda}$ such that $\langle \mu, \beta\rangle<0$; define similarly $J_\mu^\lambda$ for the adjoint representation. We will use the notations:
\begin{align*}
&d^\lambda_\mu=|I^\lambda_\mu|,\\
&e^\lambda_\mu=|J^\lambda_\mu|,\\
&c^\lambda_\mu=|I^\lambda_\mu|-|J^\lambda_\mu|\\
&N^\lambda_\mu=\sum_{I_\mu^\lambda}\beta,\\
&\mathfrak{g}^\lambda_\mu=\sum_{J_\mu^\lambda}\beta,\\
&\mathcal{N}^\lambda_\mu=N^\lambda_\mu-\mathfrak{g}^\lambda_\mu.
\end{align*}


\subsubsection{}\label{S} Let $\lambda$ and $\mu$ be cocharacters of $G$ with associated Levi and Weyl groups $G^\lambda$, $W^\lambda$, $G^\mu$, $W^\mu$. Consider the set $\textbf{S}:=W^\lambda \backslash W\slash W^\mu$. Let $V$ be the set of simple weights. For $s\in \textbf{S}$, consider partitions of $V$ induced by $\lambda$ and $w\mu$:
\begin{equation*}
V=\bigsqcup_{i\in I^\lambda} V_i,\,V=\bigsqcup_{j\in I^{w\mu}} V_j.
\end{equation*}
The sets $I^\lambda$ and $I^{w\mu}$ are the sets of eigenvalues of $\lambda$ and $w\mu$ on $\mathfrak{h}=\mathbb{C}^V$, respectively. They are ordered by the natural ordering of $\mathbb{Z}$. 
Define $V^w_{ij}:=V_i\cap V_j$ and use the lexicographic order on the set $(I^\lambda, I^{w\mu})$. Consider the decomposition 
\begin{equation}\label{V1}
    V=\bigsqcup_{(i, j)\in (I^\lambda, I^{w\mu})} V^w_{ij}.
\end{equation}
Similarly, consider the partitions of $V$ induced by $\mu$ and $w^{-1}\lambda$:
\begin{equation*}
V=\bigsqcup_{i\in I^\mu} V_i,\,V=\bigsqcup_{j\in I^{w^{-1}\lambda}} V_j.\end{equation*}
Define $V'^w_{ij}:=V_i\cap V_j$. Use the lexicographic order on the set $\left(I^\mu, I^{w^{-1}\lambda}\right)$ and consider the decomposition 
\begin{equation}\label{V2}
    V=\bigsqcup_{(i, j)\in (I^\mu, I^{w^{-1}\lambda})} V'^w_{ij}.\end{equation}

Assume next that $\lambda$ and $\mu$ are dominant cocharacters. Let $w\in W^\lambda s W^\mu$ be an element such that the lexicographic order on $(I^\lambda, I^{w_s\mu})$ induces a dominant cocharacter $\nu$; $w$ can be chosen such that it 
does not permute elements of $V_i$ for $i\in I^\lambda$ among themselves, and similarly for $V_j$ for $j\in I^{\mu}$. We can choose $w$ to be the element of minimal length $w_s\in W^\lambda s W^\mu$. 
Let $\nu$ be the dominant cocharacter induced by $\left(I^\lambda, I^{w_s\mu}\right)$. Further, $\left(I^\mu, I^{w_s^{-1}\lambda}\right)$ induces a dominant cocharacter $\nu'$. 

Further, for any $w\in W^\lambda w_s W^\mu$, there exists $w'\in W^\lambda$ such that 
\[\{w'V^w_{ij}| i\in I^\lambda, j\in I^{w\mu}\}=\{V^{w_s}_{ij}| i\in I^\lambda, j\in I^{w_s\mu}\}.\]

\subsubsection{Example.}\label{example} We discuss an example of the construction from the previous Subsection.
Let $G=GL(n)$ and assume that $\lambda$ and $\mu$ are dominant cocharacters with parabolic groups \begin{align*}
    G^{\lambda\geq 0}&=GL(a,b)\\
    G^{\mu\geq 0}&=GL(c,d).
\end{align*} 

We identify the set $V$ with $\{1,\cdots, n\}$. The partition of $V$ corresponding to $\lambda$ is $\{1,\cdots, a\}\sqcup \{a+1,\cdots, n\}$, and the partition of $V$ corresponding to $\mu$ is $\{1,\cdots, c\}\sqcup \{c+1,\cdots, n\}$.
The set $\textbf{S}=\mathfrak{S}_a\times\mathfrak{S}_b\backslash \mathfrak{S}_n\slash \mathfrak{S}_c\times\mathfrak{S}_d$ parametrizes quadruplets $\left(e_1, e_2, e_3, e_4\right)$ such that \begin{align*}
    e_1+e_2&=a,\\ e_3+e_4&=b,\\
    e_1+e_3&=c,\\ e_2+e_4&=d.
\end{align*}
The decomposition \eqref{V1} corresponds to a partition of $V$ in four sets $V_i$ of cardinal $e_i$ for $1\leq i\leq 4$ such that $V_1\sqcup V_2=\{1,\cdots, a\}$; the decomposition \eqref{V2} corresponds to a partition of $V$ in four sets $V'_i$ of cardinals $e_1$, $e_3$, $e_2$, and $e_4$ respectively, such that $V'_1\sqcup V'_2=\{1,\cdots, c\}$. The decomposition corresponding to $\nu$ is 
\[V_1=\{1,\cdots, e_1\}, \cdots, V_4=\{e_1+e_2+e_3+1,\cdots, n\}.\]
The dominant cocharacters $\nu$ and $\nu'$ correspond to the parabolic groups
\begin{align*}
    G^{\nu\geq 0}&=GL(e_1, e_2, e_3, e_4)\\
    G^{\nu'\geq 0}&=GL(e_1, e_3, e_2, e_4).
\end{align*}
The permutation $w_s\in \mathfrak{S}_n$ sends
\begin{align*}
    i&\mapsto i+e_2\text{ for }e_1+1\leq i\leq e_1+e_2,\\
    i&\mapsto i-e_2\text{ for }e_1+e_2+1\leq i\leq e_1+e_2+e_3.
\end{align*}

\subsubsection{}\label{face}

Recall the definition of $\mathbb{W}$ from \eqref{defW}. 
Let $\lambda$ a cocharacter of $G$. Assume that $n_\lambda$ is even and let $b_\lambda=n_\lambda/2$.
Denote by $\mathbb{F}(\lambda)$ the set of weights $\chi$ such that
\[\langle \lambda, \chi\rangle=b_\lambda.\]
Let $\chi$ be a weight satisfying the inequalities in \eqref{bounds} and such that $\chi\in \mathbb{F}(\lambda)$. Then there exists $\psi$ and $w\in W$ such that $w\psi$ is dominant, $w\psi+\rho_L\in \frac{1}{2}\mathbb{W}(\X^L)$, and 
\begin{equation}\label{decompositionchi}
\chi=\frac{1}{2}N^{\lambda>0}-\frac{1}{2}\mathfrak{g}^{\lambda>0}+\psi.
\end{equation}
Here $\rho_L$ is half the sum of positive roots of $\mathfrak{l}=\mathfrak{g}^\lambda$.

Indeed, the condition that $\chi$ satisfies the inequalities in \eqref{bounds} means that there exists $w\in W$ such that $w\chi$ is dominant and 
\[w\chi+\rho\in\frac{1}{2}\mathbb{W}.\]
This also implies that $w\lambda$ is dominant. By \cite[Lemma 3.12]{HLS}, \cite[Corollary 2.4]{P2}, there exists a weight $\tau\in\frac{1}{2}\mathbb{W}(\X^L)$ such that 
\[w\chi+\rho=\frac{1}{2}N^{w\lambda>0}+\tau.\]
Write $\rho=\frac{1}{2}\mathfrak{g}^{w\lambda>0}+\rho_L$. Then there exists $\omega$ dominant such that $\omega+\rho_L\in\frac{1}{2}\mathbb{W}(\X^L)$ and 
\[w\chi=\frac{1}{2}N^{w\lambda>0}-\frac{1}{2}\mathfrak{g}^{w\lambda>0}+\omega.\]
For $\psi=w^{-1}\omega$ we obtain the desired conclusion.

\subsubsection{}\label{cha}   
Recall from the discussion in Subsection \ref{abcd} that the categories $\mathbb{D}(\X)$ may contain complexes supported on attracting stacks. We discuss how to characterize these complexes.

Assume that $\X=A/G$. 
Let $\lambda$ be a cocharacter. Recall that $b_\lambda=n_\lambda/2$. Denote by $\mathbb{D}(\X^\lambda)_b$ the subcategory of $\mathbb{D}(\X^\lambda)$ generated by sheaves of weights $\chi$ such that $\langle \lambda, \chi\rangle=b_\lambda$, see \eqref{decompositionchi} for their description.
A cocharacter $\lambda$ determines a map 
\[m_\lambda:=\frac{1}{|W^\lambda|}p_{{\lambda^{-1}}*}q_{\lambda^{-1}}^*:\KK_\cdot\left(\D(\X^\lambda)_b\right)\to \KK_\cdot\left(\D(\X)\right).\]
To see that the image lies in $\D(\X)$, the sheaves in Proposition \ref{bbw} all have $r$-invariant $\leq 1/2$ by the argument in \cite[Proposition 3.12]{HLS}. 
A cocharacter $\nu:\C^*\to G$ with image in $G^\lambda$  determines a cocharacter of $G^\lambda$ and thus a map
\[m^\lambda_\nu=\frac{1}{|W^\nu|}p_{{\nu^{-1}}*}q_{\nu^{-1}}^*: \KK_\cdot\left(\D(\X^\nu)_b\right)\to \KK_\cdot\left(\D(\X^\lambda)_b\right).\]
There are similarly defined maps in the global case.
\smallskip

A dominant cocharacter $\lambda$ of $G$ determines a restriction map:
\[\Delta_\lambda:=\beta_{\geq b_\lambda}p_\lambda^*: \KK_\cdot\left(\mathbb{D}(\X)\right)\to \KK_\cdot\left(\mathbb{D}(\X^\lambda)_b\right).\] Here, $\beta_{\geq b_\lambda}$ is the functor which considers the top $\lambda$-weight component:
\[\beta_{\geq b_\lambda}: D^b(\X^{\lambda\geq 0})\to D^b(\X^{\lambda\geq 0})_{b_\lambda}\cong D^b(\X^{\lambda})_{b_\lambda},\]
see \cite[Lemma 3.4, Corollary 3.17]{HL2} for a defintion of the functor $\beta_{\geq \cdot}$; the equivalence is induces by the functor $q_\lambda^*$ \cite[Amplification 3.18]{HL2}.
It has the formula from Proposition \ref{computations3}. 
For dominant $\nu$ as above, there is a restriction map:
\[\Delta^\lambda_\nu: \KK_\cdot\left(\mathbb{D}(\X^\lambda)_b\right)\to \KK_\cdot\left(\mathbb{D}(\X^\nu)_b\right).\]

\subsubsection{}\label{sss} Assume that $\X=A/G$. We use the notations and settings of 
Subsections \ref{notations} and \ref{S}. Consider dominant $\lambda$ and $\mu$ inducing dominant $\nu$ and $\nu'$. Then 
\[\{\beta\in N|\,\langle \lambda, \beta\rangle>0, \langle w_s\mu, \beta\rangle<0\}=\{\beta\in N|\,\langle \nu, \beta\rangle>0, \langle \nu', \beta\rangle<0\},\] and so
$c^\lambda_{w_s\mu}=c^\nu_{\nu'},
    \mathcal{N}^\lambda_{w_s\mu}=\mathcal{N}^\nu_{\nu'}.$ A weight $\beta$ of $T$ determines $q_\beta\in K_0(BT)$. 
Define
    \begin{align*}
\widetilde{\text{sw}_s}: \KK_\cdot\left(\mathbb{D}(\X^\nu)_b\right)&\to \KK_\cdot\left(\mathbb{D}(\X^{\nu'})_b\right)\\
y&\mapsto (-1)^{c^\nu_{\nu'}}
w_s^{-1}\left(yq^{-\mathcal{N}^\nu_{\nu'}}\right)=(-1)^{c^\lambda_{w_s\mu}}
w_s^{-1}\left(yq^{-\mathcal{N}^\lambda_{w_s\mu}}\right).
    \end{align*}

\subsection{Computations in K-theory}\label{subcoproduct}
Recall the notations $I, J, I', J'$ from the beginning of Section \ref{comp}. 
For $\mathcal{S}$ an attracting stack in $J$ with fixed locus $\mathcal{Z}$, consider the map
\[m_\mathcal{S}:=p_*q^*: \KK_\cdot(\Z)\to \KK_\cdot(\X)\]
and the subspace of $K_\cdot(\X)$:
\[\textbf{B}\KK_\cdot(\X):=\text{image}\left(\bigoplus_{J'} 
\KK_\cdot(\Z)\to \KK_\cdot(\X)\right).\]
We define $\textbf{P}\KK_\cdot(\X)$ in Subsection \ref{intKtheory}. The main result we prove in this Section is:

\begin{thm}\label{decompK}
There is a decomposition $\KK_\cdot(\X)=\textbf{P}\KK_\cdot(\X)\oplus \textbf{B}\KK_\cdot(\X)$.
\end{thm}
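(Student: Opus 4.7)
The plan is a K-theoretic categorification of the cohomological argument for Theorem \ref{Thmcoh}. First, by Thomason's étale descent for rational K-theory \cite[Corollary 2.17]{Th} combined with Corollary \ref{ahr2}, it suffices to treat the local case $\X = A/G$ with $A$ a smooth affine $G$-scheme (equivalently, after the analytic comparison, a symmetric $G$-representation $V$). In this local setting, Theorem \ref{symsod} provides
\[\KK_\cdot(\X) = \KK_\cdot(\mathbb{A}) \oplus \KK_\cdot(\D(\X)),\]
and $\KK_\cdot(\mathbb{A})$ is contained in $\textbf{B}\KK_\cdot(\X)$ by the very definition of $\mathbb{A}$ as generated by pushforwards $p_*q^*\mathcal{E}$ from attracting stacks. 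The remaining problem is therefore to split $\KK_\cdot(\D(\X))$ as a direct sum of $\KK_\cdot(\D^{\text{nc}}(\X)) = \textbf{P}\KK_\cdot(\X)$ and the ``internal'' $\textbf{B}$-part $\sum_\lambda\,\text{image}(m_\lambda)$, where $m_\lambda : \KK_\cdot(\D(\X^\lambda)_b)\to \KK_\cdot(\D(\X))$ are the boundary maps introduced in Subsection \ref{cha}.

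The central technical input is the shuffle identity
\[\Delta_\mu \circ m_\lambda \;=\; \sum_{\nu \in \textbf{S}} \widetilde{m^\mu_\nu} \circ \Delta^\lambda_\nu\]
advertised in Subsection \ref{abcd}. I would prove it by direct computation: Proposition \ref{bbw} presents each $p_{\cdot*}q_{\cdot}^*$ as an explicit Koszul--Borel--Bott--Weil complex, so both sides expand into shuffle sums indexed by subsets of weights. One then matches terms using the double-coset combinatorics of Subsection \ref{S} (the decompositions \eqref{V1} and \eqref{V2} induced by the two orderings coming from $\lambda,\mu$ on one side and from the refining $\nu,\nu'$ on the other), with the signs and $q$-twists tracked by the definition of $\widetilde{\cdot}$ in Subsection \ref{sss}; the $GL_n$ illustration in Example \ref{example} is the prototype. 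The shuffle formula implies that $\sum_\lambda \text{image}(m_\lambda)\subset \KK_\cdot(\D(\X))$ is closed under every restriction $\Delta_\mu$, which is precisely the closure property needed to realise it as the K-theory of a sub-noncommutative-motive $(\D(\X), 1-e)$ and hence to define the complementary idempotent $e\in \KK_0(\text{rep}(\D(\X),\D(\X)))_\QQ$ giving $\D^{\text{nc}}(\X)$.

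The construction of $e$ itself I would carry out by induction on $\dim G^\lambda$, starting from the smallest Levi subgroups and working outward: the shuffle relations guarantee that the successive projections onto the images of $m_\lambda$ are consistent and assemble into a well-defined idempotent after finitely many steps because the set of Levi types is finite. The main obstacle, and what I expect to be the hardest part, is then directness of the total decomposition, that is, $\KK_\cdot(\D^{\text{nc}}(\X)) \cap \textbf{B}\KK_\cdot(\X) = 0$. The symmetry hypothesis on $V$ is essential here: together with the sign $(-1)^{c^\lambda_{w_s\mu}}$ in $\widetilde{m^\mu_\nu}$ it provides the parity that mirrors, in K-theory, the mechanism by which Proposition \ref{prp} forced the attracting-stack classes into strictly positive perverse degree in cohomology. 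As a final consistency check, the Chern character should identify the decomposition of Theorem \ref{decompK} with that of Theorem \ref{Thmcoh}, matching $\textbf{P}\KK_\cdot$ with $\textbf{P}^{\leq 0}\HH^\cdot$ and $\textbf{B}\KK_\cdot$ with $\textbf{B}$.
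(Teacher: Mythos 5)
Your proposal is correct and follows essentially the same route as the paper: reduction to the local case via Thomason's rational étale descent, the semi-orthogonal decomposition of Theorem \ref{symsod}, the product–coproduct shuffle identity (Theorem \ref{copr}) proved by Koszul/Borel–Bott–Weil expansion and double-coset combinatorics, and an inductive construction of the idempotent over Levi types. One small mischaracterization: in the paper the directness of the decomposition (Theorem \ref{deco}) falls out formally from the shuffle relations via Propositions \ref{compequal} and \ref{compdiff}, rather than from a parity argument mirroring Proposition \ref{prp}; the symmetry hypothesis enters earlier, in setting the window bounds $\pm n_\lambda/2$ and the face combinatorics of Subsection \ref{face}, not as a sign mechanism in the directness step.
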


The definition of $\textbf{P}\KK_\cdot(\X)$ is based on the following result. We restrict to the local case $\X=A/G$. The following is the main ingredient in the proof of Theorem \ref{decompK}.

\begin{thm}\label{copr}
Let $\lambda$ and $\mu$ be two dominant cocharacters, let $\textbf{S}$ be the set defined in Subsection \ref{S}, and let $\nu$ be dominant cocharacters as constructed in Subsection \ref{S}.
The following diagram commutes:
\begin{equation*}
\begin{tikzcd}
\KK_\cdot\left(\D(\X^\lambda)_b\right)\arrow[d, "\bigoplus_{\textbf{S}} \Delta^\lambda_\nu"']\arrow[r, "m_\lambda"]& \KK_\cdot(\D(\X))\arrow[d, "\Delta_\mu"]\\
\bigoplus_{\textbf{S}} \KK_\cdot\left(\D(\X^\nu)_b\right) \arrow[r, "\bigoplus_{\textbf{S}} \widetilde{m^\mu_{\nu}}"] & \KK_\cdot\left(\D(\X^\mu)_b\right).
\end{tikzcd}
\end{equation*}
\end{thm}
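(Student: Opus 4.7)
My plan is to verify commutativity by passing to the torus via the isomorphism $\KK_\cdot(A/G) \cong \KK_\cdot(A/T)^W$ from Subsection \ref{Weylinv} and computing both compositions as explicit shuffle-type sums, which can then be matched term-by-term via the double coset decomposition $W = \bigsqcup_{s \in \textbf{S}} W^\lambda w_s W^\mu$. The category $\D(\X^\lambda)_b$ is generated by the bundles $\OO_{\X^\lambda}\otimes\Gamma(\chi)$ with $\chi$ of the specific form \eqref{decompositionchi} described in Subsection \ref{face}, so it suffices to check the identity on this explicit generating set.

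For the top-right composition, I would first apply Proposition \ref{bbw} to expand $m_\lambda(\OO\otimes\Gamma(\chi))$ as an alternating sum of bundles $\F_I$ indexed by subsets $I \subset \{\beta \mid \langle \lambda,\beta\rangle<0\}$, each coming with the Koszul/Borel--Bott--Weil sign $(-1)^{|I|-\ell(w)}$ and highest weight $(\chi - \sigma_I)^+$. Then $\Delta_\mu = \beta_{\geq b_\mu}p_\mu^*$ retains only those summands whose top $\mu$-weight equals $b_\mu$. The resulting expression is a $K$-theoretic refinement of the cohomological shuffle formula of Proposition \ref{compcoh}, with sums indexed by $W/W^\mu$ once one writes $m_\lambda$ on the torus.

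For the bottom-left composition, $\Delta^\lambda_\nu$ picks out the $\nu$-extremal weight component of a generator of $\D(\X^\lambda)_b$, and $\widetilde{m^\mu_\nu}$ then re-inducts along $\nu'$ inside $G^\mu$ with the prescribed sign $(-1)^{c^\nu_{\nu'}}$ and twist by $-\mathcal{N}^\nu_{\nu'}$. The content of Subsection \ref{S} is precisely that the lexicographic refinement of the partitions \eqref{V1}--\eqref{V2} determined by $w_s$ produces cocharacters $\nu,\nu'$ whose positive weight sets satisfy
\[
\{\beta \mid \langle\nu,\beta\rangle>0,\ \langle\nu',\beta\rangle<0\} = \{\beta \mid \langle\lambda,\beta\rangle>0,\ \langle w_s\mu,\beta\rangle<0\},
\]
giving the identities $c^\nu_{\nu'} = c^\lambda_{w_s\mu}$ and $\mathcal{N}^\nu_{\nu'} = \mathcal{N}^\lambda_{w_s\mu}$ of Subsection \ref{sss}. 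Reorganizing the sum over $W/W^\mu$ on the top-right as a double sum indexed by $\textbf{S}$ and then by $W^\mu/W^{\nu'}$ (via the minimal-length representatives $w_s$), the contribution for each $s$ matches the $s$-summand on the bottom-left exactly, once one uses the symmetry of $N$ to rewrite the Koszul sign from Proposition \ref{bbw} as the twist $(-1)^{c^\lambda_{w_s\mu}}q^{-\mathcal{N}^\lambda_{w_s\mu}}$.

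The main obstacle I anticipate is the bookkeeping of signs and weight shifts: one must simultaneously track the Koszul sign $(-1)^{|I|-\ell(w)}$, the Weyl-dot-shift sending $\chi-\sigma_I$ to its dominant representative, the sign built into $\widetilde{m^\mu_\nu}$, and the interchange of orderings between the partitions \eqref{V1} and \eqref{V2} under $w_s \leftrightarrow w_s^{-1}$. Care is also needed to ensure that every intermediate term lies in the admissible subcategory $\D(\X^\mu)_b$, so that $\beta_{\geq b_\mu}$ does not discard terms that should contribute; this is where the windowed bound $\chi \in \mathbb{F}(\lambda)$ and the decomposition \eqref{decompositionchi} become essential, as they guarantee that $(\chi-\sigma_I)^+$ automatically satisfies the $\mu$-weight bounds of $\D(\X^\mu)_b$ for exactly those $I$ parametrized by double cosets.
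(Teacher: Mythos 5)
Your proposal follows essentially the same route as the paper's proof: pass to the torus via $\KK_\cdot(A/G)\cong\KK_\cdot(A/T)^W$, expand $m_\lambda$ via the Koszul/shuffle formula of Proposition \ref{computations2}, identify the surviving terms after $\Delta_\mu$ by the double coset decomposition using Propositions \ref{computations3} and \ref{computations4}, and match each $s$-contribution to $\widetilde{m^\mu_\nu}\Delta^\lambda_\nu$ by converting the Koszul sign and twist into $(-1)^{c^\nu_{\nu'}}q^{-\mathcal{N}^\nu_{\nu'}}$. The bookkeeping you defer (the characterization of admissible $\sigma_I$, and the identity $-w_s\tau=\mathfrak{g}^\lambda_{w_s\mu}$ used to absorb the denominator's $w_s^{-1}$-conjugation into the twist) is exactly the computational core of the paper's argument, but your overall strategy is the one the paper uses.
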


Before the start of the above result, we list some preliminary computations. The first two are standard, for example, for the first one see \cite[Proposition 1.2]{YZ}, \cite[Propositions 3.1 and 3.2]{P3}: 

\begin{prop}\label{computations}
Consider the maps 
$A/T\xleftarrow{t} A/B\xrightarrow{s} A/G.$
Then the map $s_*t^*:\KK_\cdot(A/T)\to \KK_\cdot(A/G)$ has the formula 
\[s_*t^*(y)=\sum_{w\in W} w\left(\frac{y}{\prod_{\beta\in\textbf{n}}(1-q^{-\beta})}\right).\]
\end{prop}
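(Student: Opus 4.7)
The plan is to recognize this as the $K$-theoretic Weyl integration/character formula for the flag variety $G/B$, relative to $A$. First I would interpret the roof: the map $s: A/B \to A/G$ is proper with fiber $G/B$, while $t$ relates $T$-equivariant and $B$-equivariant K-theory of $A$ (the natural affine bundle $A/T \to A/B$ induces an isomorphism on rational $K$-theory, so $t^*$ is a well-defined map $\KK_\cdot(A/T) \to \KK_\cdot(A/B)$). Thus $s_* t^*$ is the natural induction map $\KK_\cdot(A/T) \to \KK_\cdot(A/G)$.

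Next I would reduce to the absolute case $A = \mathrm{pt}$. Both sides of the formula are linear over $\KK_\cdot(A/G)$ after using the identification $\KK_\cdot(A/T) \cong \KK_\cdot(A/G) \otimes_{\KK_\cdot(\mathrm{pt}/G)} \KK_\cdot(\mathrm{pt}/T)$, valid rationally, together with the Weyl-invariance isomorphism of Subsection \ref{Weylinv}. It therefore suffices to compute the pushforward along $p: \mathrm{pt}/T \to \mathrm{pt}/G$ on K-theory of the point, i.e.\ the map from the representation ring of $T$ to that of $G$.

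The core computation is Atiyah--Bott localization on $G/B$. The $T$-fixed points are indexed by $W$, with tangent weights at the fixed point $wB/B$ obtained from the weights in $\textbf{n}$ by applying $w$. Localization then yields
\[p_*(y) = \sum_{w \in W} w\!\left(\frac{y}{\prod_{\beta \in \textbf{n}}(1 - q^{-\beta})}\right),\]
matching the stated formula, and the relative version over $A$ follows by the projection formula. The main subtle point is justifying the K-theoretic localization identity in this stacky relative setting; this can be done directly by filtering $A/B$ by Schubert-type substacks and iteratively applying the projective bundle formula, or else deferred to the cited references \cite[Proposition 1.2]{YZ} and \cite[Propositions 3.1 and 3.2]{P3}, which establish essentially the same statement.
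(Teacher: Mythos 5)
The paper itself offers no proof of Proposition~\ref{computations}; it simply labels the formula ``standard'' and defers to \cite[Proposition 1.2]{YZ} and \cite[Propositions 3.1, 3.2]{P3}. Your proposal fills in essentially the argument those references use: interpret $s_*t^*$ as the $K$-theoretic Weyl integration along the $G/B$-bundle $A/B\to A/G$, and evaluate it either by Atiyah--Bott localization on the fiber $G/B$ (fixed points indexed by $W$, tangent weights at $wB$ given by $w$ applied to the weights of $\mathfrak{g}/\mathfrak{b}$) or by an iterated $\mathbb{P}^1$-bundle filtration. You also correctly handle the slightly misleading arrow direction: the natural map is the affine bundle $A/T\to A/B$, and ``$t^*$'' is the inverse of the induced isomorphism on $K$-theory. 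That is the right reading.

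Two small remarks. First, the Künneth-type identification $\KK_\cdot(A/T)\cong\KK_\cdot(A/G)\otimes_{\KK_\cdot(\text{pt}/G)}\KK_\cdot(\text{pt}/T)$ you invoke in the reduction step is Merkurjev's theorem (valid rationally, as you use), but it is more machinery than needed; the projection formula for the proper map $s$ already gives $\KK_\cdot(A/G)$-linearity of $s_*$, and since $\KK_\cdot(A/T)$ is generated over $\KK_\cdot(A/G)$ by the characters $q^\chi$, one reduces directly to computing $s_*\mathcal{O}_{A/B}(\chi)$, i.e.\ the Weyl character formula for $\Gamma(\chi)$. Second, with the paper's convention that $\mathfrak{b}$ carries the negative roots, $\textbf{n}$ denotes the weights of the nilradical, so when you match your localization output against the stated formula you should track the resulting sign and $\rho$-shift coming from rewriting $\prod(1-q^{-\beta})$ versus $\prod(1-q^{\beta})$; this is a bookkeeping issue, not a gap in the argument. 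Overall the proposal is correct and is the expected proof.
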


\begin{prop}\label{computations2}
The map 
$m_{\lambda}: \KK_\cdot(\X^\lambda)\to \KK_\cdot(\X)$ has the formula 
\[m_{\lambda}(x)=\frac{1}{|W^\lambda|}\sum_{w\in W/W^\lambda} 
w\left(x\frac{\prod_{\beta\in \textbf{A}_{\lambda}}(1-q^{-\beta})}
{\prod_{\beta\in \textbf{g}_{\lambda}}(1-q^{-\beta})}\right).\]
\end{prop}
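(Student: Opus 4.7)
The plan is to reduce to a torus-equivariant computation and then apply Proposition \ref{computations}.

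First, using the identifications $\KK_\cdot(A/G)\cong \KK_\cdot(A/T)^W$ and $\KK_\cdot(A^\lambda/G^\lambda)\cong \KK_\cdot(A^\lambda/T)^{W^\lambda}$ from Subsection \ref{Weylinv}, it suffices to compute a $T$-equivariant version of $m_\lambda$ on $W^\lambda$-invariants. I would factor the $\Theta$-stack map as
\[A^{\lambda\leq 0}/G^{\lambda\leq 0}\xrightarrow{\iota}A/G^{\lambda\leq 0}\xrightarrow{\pi}A/G,\]
where $\iota$ is the closed immersion with normal bundle of $T$-weights $\textbf{A}_\lambda$, and $\pi$ is a partial flag bundle with fiber $G/G^{\lambda\leq 0}$. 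The pullback $q_{\lambda^{-1}}^*$ is an isomorphism on rational K-theory (the affine bundle retraction $A^{\lambda\leq 0}\to A^\lambda$ combined with passage through the unipotent radical $G^{\lambda\leq 0}\to G^\lambda$), so the computation reduces to evaluating $\pi_*\iota_*$.

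For $\iota_*$, the self-intersection/Koszul resolution formula gives that $\iota_*$ multiplies by the K-theoretic Euler class $\prod_{\beta\in\textbf{A}_\lambda}(1-q^{-\beta})$ of the normal bundle. For $\pi_*$, I would lift further to the full Borel flag bundle: choose a Borel $B\subset G^{\lambda\leq 0}$, giving $A/B\xrightarrow{\pi'}A/G^{\lambda\leq 0}\xrightarrow{\pi}A/G$ with $\pi\circ\pi' = s$, the map of Proposition \ref{computations}. Proposition \ref{computations} expresses $s_*t^*$ as a $W$-symmetrized sum with denominator $\prod_{\textbf{n}}(1-q^{-\beta})$, while the analogous statement for $\pi'$ inside $G^{\lambda\leq 0}$ (with Levi $G^\lambda$) contributes a $W^\lambda$-symmetrized piece with denominator $\prod_{\textbf{n}^\lambda}(1-q^{-\beta})$. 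Dividing gives $\pi_*$ on $W^\lambda$-invariants as a sum over cosets $W/W^\lambda$ with denominator $\prod_{\textbf{g}_\lambda}(1-q^{-\beta})$, using the root-space decomposition $\mathfrak{g}=\mathfrak{g}^\lambda\oplus\mathfrak{g}^{\lambda>0}\oplus\mathfrak{g}^{\lambda<0}$.

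Assembling these pieces yields
\[m_\lambda(x)=\tfrac{1}{|W^\lambda|}\pi_*\iota_*q_{\lambda^{-1}}^*(x)=\tfrac{1}{|W^\lambda|}\sum_{w\in W/W^\lambda}w\!\left(x\,\frac{\prod_{\beta\in\textbf{A}_\lambda}(1-q^{-\beta})}{\prod_{\beta\in\textbf{g}_\lambda}(1-q^{-\beta})}\right).\]
The main bookkeeping obstacle will be the last step: carefully matching the ratio between the full Weyl denominator $\prod_{\textbf{n}}$ and the Levi Weyl denominator $\prod_{\textbf{n}^\lambda}$ against $\prod_{\textbf{g}_\lambda}$, after accounting for the $W^\lambda$-symmetrization within the Levi. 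This is the standard Weyl-integration manipulation in equivariant K-theory, and is the K-theoretic analogue of the cohomological computation in Proposition \ref{compcoh}.
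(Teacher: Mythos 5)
Your approach is essentially the same as the paper's. The paper's proof cites Proposition \ref{bbw}, whose own proof (as indicated in the paper) factors the proper pushforward $p_*$ as the K\"oszul resolution along the closed immersion $A^{\lambda\geq 0}/G^{\lambda\geq 0}\hookrightarrow A/G^{\lambda\geq 0}$ composed with Borel--Bott--Weil for the partial flag bundle $A/G^{\lambda\geq 0}\to A/G$ — exactly the $\iota$-then-$\pi$ factorization you use (with $\lambda$ replaced by $\lambda^{-1}$, which you handle correctly). The paper then identifies the terms $\mathcal{F}_I$ as $v_*(r^*\mathcal{F}(-\sigma_I))$ and deduces the shuffle formula; you instead pass to K-theory immediately and invoke Proposition \ref{computations} for the parabolic Weyl integration. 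One small caution: the statement "$\iota_*$ multiplies by the Euler class" should be read under the identifications $\KK_\cdot(A^{\lambda\leq 0}/G^{\lambda\leq 0})\cong\KK_\cdot(A/G^{\lambda\leq 0})$ (both $\cong R(G^\lambda)_\mathbb{Q}$ after homotopy retraction), as $\iota^*\iota_*$ is what literally multiplies by the Euler class; and the "dividing" step extracting the $G/P$ Weyl integration from the $G/B$ one is most cleanly phrased via the projection formula and $\pi'_*\mathcal{O}_{A/B}=\mathcal{O}_{A/P}$ rather than as an actual division of formulas. These are presentational points, not gaps; the bookkeeping you flag does close in the way you expect.
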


\begin{proof}
Consider the natural maps \begin{align*}
    r&: A\hookrightarrow A^{\lambda\geq 0}\times_{A^{\lambda}}A^{\lambda\leq 0}\to A^\lambda,\\
    v&: A/G^{\lambda\geq 0}\to A/G.
\end{align*}
In the statement of Proposition \ref{bbw}, the sheaves $\mathcal{F}_I$ are $v_*\left(r^*(\mathcal{F})(-\sigma_I)\right)$. The claim thus follows from Proposition \ref{bbw}, see also \cite[Propositions 3.1]{P3} for a similar computation.
\end{proof}

\begin{prop}\label{Leviequal}
Let $\lambda$ and $\mu$ be cocharacters with the same associated Levi group $L\subset G$. 
For $y\in \KK_\cdot(\X^L)$, let $y':=(-1)^{c^\lambda_ \mu}yq^{-\mathcal{N}^{\lambda}_{\mu}}\in \KK_\cdot(\X^L)$. Then \[m_\lambda(y)=m_\mu(y').\]
\end{prop}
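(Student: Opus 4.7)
The plan is to derive the equality from the explicit shuffle formula for $m_\lambda$ given in Proposition \ref{computations2}. Since $\lambda$ and $\mu$ share the Levi $L$, we have $W^\lambda = W^\mu = W^L$ and the sums in the formulas for $m_\lambda$ and $m_\mu$ range over the same set of cosets $W/W^L$. Moreover $N^\lambda = N^\mu = N^L$, so the only discrepancy between the two formulas is in the products over $\textbf{A}_\lambda$ vs.\ $\textbf{A}_\mu$ and $\textbf{g}_\lambda$ vs.\ $\textbf{g}_\mu$.

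The first step is to identify how these two product sets differ. Because $N$ is symmetric, the weights of $N$ not in $N^L$ pair up as $\pm\beta$; for each such pair exactly one member lies in $\textbf{A}_\lambda$ and exactly one in $\textbf{A}_\mu$. By the definition recalled in Subsection \ref{notations}, $I^\lambda_\mu \subset \textbf{A}_\lambda$ is exactly the set of $\beta \in \textbf{A}_\lambda$ with $\langle \mu,\beta\rangle<0$, i.e.\ whose opposite $-\beta$ lies in $\textbf{A}_\mu$. Hence $\textbf{A}_\mu = (\textbf{A}_\lambda \setminus I^\lambda_\mu) \sqcup (-I^\lambda_\mu)$, and analogously $\textbf{g}_\mu = (\textbf{g}_\lambda \setminus J^\lambda_\mu) \sqcup (-J^\lambda_\mu)$.

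The second step is a direct computation using $1 - q^{-\beta} = -q^{-\beta}(1 - q^{-(-\beta)})$. Applying this identity to each $\beta \in I^\lambda_\mu$ yields
\[
\prod_{\beta \in \textbf{A}_\lambda}(1-q^{-\beta})
= (-1)^{d^\lambda_\mu}\, q^{-N^\lambda_\mu}\prod_{\beta \in \textbf{A}_\mu}(1-q^{-\beta}),
\]
and the same argument on $J^\lambda_\mu$ gives the analogous identity with $\textbf{g}$, $e^\lambda_\mu$, $\mathfrak{g}^\lambda_\mu$ in place of $\textbf{A}$, $d^\lambda_\mu$, $N^\lambda_\mu$. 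Taking the ratio, the combined sign is $(-1)^{c^\lambda_\mu}$ and the combined $q$-shift is $q^{-\mathcal{N}^\lambda_\mu}$ by definition.

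The third step is to check that the multiplicative correction factor $(-1)^{c^\lambda_\mu} q^{-\mathcal{N}^\lambda_\mu}$ is actually $W^L$-invariant, so that $y' = (-1)^{c^\lambda_\mu} y q^{-\mathcal{N}^\lambda_\mu}$ is a well-defined element of $\KK_\cdot(\X^L)$ rather than just of $\KK_\cdot(\X^T)$. This follows because every $w \in W^L$ fixes $\lambda$ and $\mu$, hence preserves the subsets $I^\lambda_\mu$ and $J^\lambda_\mu$ of weights, and therefore fixes $\mathcal{N}^\lambda_\mu$. Combining the three steps, the formula for $m_\lambda(y)$ becomes exactly $m_\mu(y')$.

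There is no real obstacle here — the content of the lemma is a bookkeeping identity among the sign and $q$-shift coming from reflecting weights across $N^L$ in a symmetric representation, and the only thing to verify carefully is the $W^L$-invariance that makes the twist $y \mapsto y'$ well-defined on the Levi stack.
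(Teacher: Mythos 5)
Your proof is correct and follows essentially the same approach as the paper: both start from the shuffle formula of Proposition \ref{computations2} and reduce the claim to the multiplicative identity between $\prod_{\textbf{A}_\lambda}(1-q^{-\beta})/\prod_{\textbf{g}_\lambda}(1-q^{-\beta})$ and the corresponding expression for $\mu$. The paper simply states this identity, whereas you also derive it from $1-q^{-\beta}=-q^{-\beta}(1-q^{\beta})$ and spell out the $W^L$-invariance of the twist factor that makes $y'$ well-defined on $\X^L$; these are useful elaborations but not a different route.
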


\begin{proof}
Using Proposition \ref{computations2}, we have that:
\begin{align*}
    m_\lambda(y)&=\frac{1}{|W^\lambda|}\sum_{W/W^L}
    w\left(y\frac{\prod_{\textbf{A}_{\lambda}}(1-q^{-\beta})}{\prod_{\textbf{g}_{\lambda}}(1-q^{-\beta})}\right)\\
    m_\mu(y)&=\frac{1}{|W^\lambda|}\sum_{W/W^L}w\left(y\frac{\prod_{\textbf{A}_{\mu}}(1-q^{-\beta})}{\prod_{\textbf{g}_{\mu}}(1-q^{-\beta})}\right).
\end{align*}
Further, we have that
\[\frac{\prod_{\textbf{A}_{\mu}}(1-q^{-\beta})}{\prod_{\textbf{g}_{\mu}}(1-q^{-\beta})}=
\frac{\prod_{\textbf{A}_{\lambda}}(1-q^{-\beta})}{\prod_{\textbf{g}_{\lambda}}(1-q^{-\beta})}(-1)^{c^\lambda_\mu}q^{\mathcal{N}^{\lambda}_{\mu}},\]
which implies the desired conclusion.
\end{proof}

\begin{cor}\label{corD}
Under the hypothesis of Proposition \ref{Leviequal}, 
\[\text{image}\,\left(\KK_\cdot\left(\mathbb{D}(\X^\lambda)_b\right)\xrightarrow{m_\lambda}\KK_\cdot(\mathbb{D}(\X))\right)=
\text{image}\,\left(\KK_\cdot\left(\mathbb{D}(\X^\mu)_b\right)\xrightarrow{m_\mu}\KK_\cdot(\mathbb{D}(\X))\right).\]
\end{cor}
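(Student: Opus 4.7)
The plan is to derive Corollary~\ref{corD} directly from Proposition~\ref{Leviequal} combined with the parametric description of weights from Subsection~\ref{face}. Since $\lambda$ and $\mu$ share the same Levi $L$, we have $\X^\lambda = \X^\mu = \X^L$, and both $\KK_\cdot(\D(\X^\lambda)_b)$ and $\KK_\cdot(\D(\X^\mu)_b)$ sit inside $\KK_\cdot(\X^L)$. Proposition~\ref{Leviequal} asserts that the twist $\phi\colon y \mapsto (-1)^{c^\lambda_\mu} q^{-\mathcal{N}^\lambda_\mu} y$ is an automorphism of $\KK_\cdot(\X^L)$ satisfying $m_\lambda = m_\mu \circ \phi$. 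Consequently,
\[\text{Im}\bigl(m_\lambda|_{\KK_\cdot(\D(\X^\lambda)_b)}\bigr) = m_\mu\bigl(\phi(\KK_\cdot(\D(\X^\lambda)_b))\bigr),\]
and the task reduces to showing this coincides with $\text{Im}\bigl(m_\mu|_{\KK_\cdot(\D(\X^\mu)_b)}\bigr)$.

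To carry this out, I would use Subsection~\ref{face}: each weight in $\mathbb{F}(\lambda)$ is uniquely of the form $\chi_\lambda(\psi) := \tfrac{1}{2}(N^{\lambda>0} - \mathfrak{g}^{\lambda>0}) + \psi$ for some parameter $\psi$ satisfying $w\psi + \rho_L \in \tfrac{1}{2}\mathbb{W}(\X^L)$, and the same set of $\psi$'s parametrizes $\mathbb{F}(\mu)$ via $\chi_\mu(\psi) := \tfrac{1}{2}(N^{\mu>0} - \mathfrak{g}^{\mu>0}) + \psi$. A short direct computation, using that $\mathfrak{g}_{+0} = \mathfrak{g}_{0+} = \emptyset$ (a consequence of $\lambda$ and $\mu$ having the same Levi), gives
\[\phi(\chi_\lambda(\psi)) - \chi_\mu(\psi) = \tfrac{1}{2}\bigl(N_{+0} - N_{0+}\bigr),\]
where $N_{\epsilon\delta}$ denotes the sum of weights $\beta$ of $N$ with $\operatorname{sgn}\langle\lambda,\beta\rangle = \epsilon$ and $\operatorname{sgn}\langle\mu,\beta\rangle = \delta$.

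The key remaining identity to verify is $m_\mu(q^{\chi_\mu(\psi) + \frac{1}{2}(N_{+0}-N_{0+})}) = \pm\, m_\mu(q^{\chi_\mu(\psi)})$ for each such $\psi$. I would establish this by direct inspection of the shuffle formula in Proposition~\ref{computations2}: the symmetry of $N$ pairs weights in $N_{+0}$ with those in $N_{-0}$ (and $N_{0+}$ with $N_{0-}$) via $\beta \mapsto -\beta$, making the shift $\tfrac{1}{2}(N_{+0} - N_{0+})$ antisymmetric under the relevant Weyl action on the sum defining $m_\mu$, hence contributing at most a sign. Combined with the reverse inclusion obtained by swapping the roles of $\lambda$ and $\mu$, this completes the argument. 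The main obstacle is the bookkeeping in this last step, including verifying the integrality of $\tfrac{1}{2}(N_{+0}-N_{0+})$ as a weight class (so that the corresponding class in $\KK_\cdot(\X^L)$ is well-defined) and pinning down the exact sign, in a calculation analogous to but slightly more delicate than the one underlying Proposition~\ref{Leviequal}.
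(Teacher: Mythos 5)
Your strategy is essentially the paper's — apply Proposition~\ref{Leviequal} and the face parametrization from Subsection~\ref{face} — and your weight computation is correct, but you stop short of the decisive observation. In the situation of Proposition~\ref{Leviequal} (and hence of Corollary~\ref{corD}), the hypothesis is not merely that $G^\lambda=G^\mu=L$; the paper writes $\KK_\cdot(\X^L)$, $\D(\X^\lambda)_b$, $\D(\X^\mu)_b$ for categories living on the \emph{same} fixed substack $\X^L$, so the implicit assumption is that $\X^\lambda=\X^\mu$, i.e.\ $N^\lambda=N^\mu$. This is exactly the statement that $N_{+0}=N_{0+}=\varnothing$, by the identical reasoning you invoke for $\mathfrak{g}_{+0}=\mathfrak{g}_{0+}=\varnothing$. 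Consequently your correction term $\tfrac{1}{2}(N_{+0}-N_{0+})$ vanishes identically, $\phi(\chi_\lambda(\psi))=\chi_\mu(\psi)$, and $\phi$ restricts to an isomorphism $\KK_\cdot(\D(\X^\lambda)_b)\xrightarrow{\sim}\KK_\cdot(\D(\X^\mu)_b)$; the corollary then follows from $m_\lambda=m_\mu\circ\phi$ together with the symmetric argument exchanging $\lambda$ and $\mu$. This is precisely the paper's proof.

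The last step of your proposal is therefore attacking a nonexistent difficulty, and the sign-cancellation argument itself would not work even if the term were nonzero. First, the stated identity $m_\mu(q^{\chi_\mu(\psi)+\frac{1}{2}(N_{+0}-N_{0+})})=\pm\,m_\mu(q^{\chi_\mu(\psi)})$ would only address equality of $m_\mu$-values, not the needed containment $\phi(\KK_\cdot(\D(\X^\lambda)_b))\subset\KK_\cdot(\D(\X^\mu)_b)$. Second, and more fatally, if $N_{0+}\neq\varnothing$ then $\langle\mu,\chi_\mu(\psi)+\tfrac{1}{2}(N_{+0}-N_{0+})\rangle<b_\mu$, so the shifted weight leaves the face $\mathbb{F}(\mu)$ and the class is no longer in $\KK_\cdot(\D(\X^\mu)_b)$ at all; there is no sign trick that can rescue this. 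The cure is to recognize, before any of that, that $N^\lambda=N^\mu$ is part of the standing hypothesis.
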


\begin{proof}
Assume that $y\in \KK_\cdot\left(\mathbb{D}(\X^\lambda)_b\right)$. Let $\chi$ and $\chi'$ be the weights of $y$ and $y'$, respectively. Then
\[\chi'=\chi+\mathcal{N}^{\mu}_\lambda.\]
By the discussion in Subsection \ref{face} and the decomposition in \eqref{decompositionchi}, there is a weight $\psi$ with the properties mentioned there such that
\begin{align*}
    \chi&=\frac{1}{2}N^{\lambda>0}-\frac{1}{2}\mathfrak{g}^{\lambda>0}+\psi\\
    \chi'&=\frac{1}{2}N^{\mu>0}-\frac{1}{2}\mathfrak{g}^{\mu>0}+\psi,
\end{align*}
and thus $y'=(-1)^{c^\lambda_ \mu}yq^{\mathcal{N}^{\mu}_{\lambda}}\in \KK_\cdot\left(\mathbb{D}(\X^\mu)_b\right)$.
\end{proof}

\begin{prop}\label{computations3}
Let $\mu$ be a dominant character and let $y\in \KK_\cdot(A/T)_\chi$ with $\chi$ on the face $\mathbb{F}(\mu)$. Then the restriction map
$\Delta_\mu: \KK_\cdot(\D(\X))\to \KK_\cdot\left(\D(\X^\mu)_b\right)$ sends 
\[\Delta_\mu\left(\sum_{w\in W} w\left(\frac{y}{\prod_{\beta\in \textbf{n}}(1-q^{-\beta})}\right)\right)=
\sum_{w\in W^\mu} w\left(\frac{y}{\prod_{\beta\in \textbf{n}^\mu}(1-q^{-\beta})}\right).\]
Let $y\in \KK_\cdot(A/T)_\chi$ with $w*\chi$ not on $\mathbb{F}(\mu)$ for any $w\in W$, then
\[\Delta_\mu\left(\sum_{w\in W} w\left(\frac{y}{\prod_{\beta\in \textbf{n}}(1-q^{-\beta})}\right)\right)=0.\]
\end{prop}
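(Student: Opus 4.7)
The strategy is to explicitly compute $\Delta_\mu \circ s_* t^*(y)$ by combining the Weyl-sum formula of Proposition \ref{computations} with two key manipulations: a $\rho$-shift identity that rewrites the symmetric sum in a Borel--Weil--Bott form where the shifted Weyl action appears explicitly, and a $\mu$-weight decomposition of the denominator that isolates the Levi piece.

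First, I use the classical identity $w\bigl(\prod_{\beta \in \textbf{n}}(1-q^{-\beta})\bigr) = (-1)^{\ell(w)} q^{s_w} \prod_{\beta \in \textbf{n}}(1-q^{-\beta})$, where the shift $s_w$ is built from $\rho$ and $w\rho$ (its precise form depending on whether $\textbf{n}$ consists of positive or negative roots). Writing $y = y_0\, q^\chi$ with $y_0$ of trivial $T$-weight, the formula of Proposition \ref{computations} rewrites as
\[\sum_{w \in W} w\Bigl(\frac{y}{\Delta}\Bigr) = \frac{y_0}{\Delta}\sum_{w \in W} (-1)^{\ell(w)} q^{w*\chi},\]
where $\Delta := \prod_{\beta \in \textbf{n}}(1-q^{-\beta})$ and $w*\chi := w(\chi+\rho) - \rho$. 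This recasts the computation as a character-formula-style sum and is the source of the shifted Weyl action in the statement.

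Next, I factor $\Delta = \Delta^\mu \cdot \Delta^\perp$, where $\Delta^\mu = \prod_{\beta \in \textbf{n}^\mu}(1-q^{-\beta})$ collects the factors of $\mu$-weight $0$ (the Levi roots) and $\Delta^\perp$ the remainder. Expanding $1/\Delta^\perp$ as an appropriate geometric series---in powers of $q^{-\beta}$ when $\langle\mu, \beta\rangle > 0$ and in powers of $q^\beta$ when $\langle\mu, \beta\rangle < 0$---its $\mu$-weight-zero component is the constant $1$. Hence $\beta_{\geq b_\mu}$ applied to $q^{w*\chi}/\Delta$ equals $q^{w*\chi}/\Delta^\mu$ when $\langle\mu, w*\chi\rangle = b_\mu$ and vanishes when $\langle\mu, w*\chi\rangle < b_\mu$. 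The window condition implicit in the setup---the class $y$ is used to produce objects of $\KK_\cdot(\D(\X))$, where $\chi + \rho \in \tfrac{1}{2}\mathbb{W}$---combined with $W$-invariance of $\tfrac{1}{2}\mathbb{W}$ gives $\langle\mu, w*\chi\rangle \leq b_\mu$ for every $w$ (by pairing $w(\chi+\rho) \in \tfrac{1}{2}\mathbb{W}$ with the dominant cocharacter $\mu$).

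Finally, for the case $\chi \in \mathbb{F}(\mu)$ (reduced to $\chi$ dominant by Weyl-invariance of the full sum), the equality $\langle\mu, w*\chi\rangle = b_\mu$ holds iff $w \in W^\mu$, since $\mu$ is dominant and $\chi+\rho$ is strictly dominant. Summing yields $\tfrac{y_0}{\Delta^\mu}\sum_{w \in W^\mu}(-1)^{\ell(w)} q^{w*\chi}$, which by applying the identity of the first step in reverse for the Levi $G^\mu$ (with $\rho$ replaced by the half-sum of positive roots of $G^\mu$ and $\Delta$ by $\Delta^\mu$) equals $\sum_{w \in W^\mu} w\bigl(y/\prod_{\beta \in \textbf{n}^\mu}(1-q^{-\beta})\bigr)$, as claimed. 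For the second case, the window bound combined with the hypothesis gives strict inequality $\langle\mu, w*\chi\rangle < b_\mu$ for all $w$, so every summand contributes zero to $\beta_{\geq b_\mu}$. The main obstacle is the careful use of the window bound to secure strict inequality in the second case: without it, a simple $SL_2$ example (e.g.\ taking $V = \mathrm{Sym}^4$ and $\chi = 4$, where $b_\mu = 2$ but $V_4$ has a nonzero weight-$2$ subspace) shows that the vanishing fails. The remaining technical input---the $\rho$-shift identity for $w(\Delta)$---is standard Weyl combinatorics via inversion counting.
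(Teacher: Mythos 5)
Your proof is correct in substance and identifies the right ingredients, but it takes a more computational route than the paper. The paper's argument is shorter because it works directly at the level of representations: it invokes the fact (essentially Borel--Weil for the Levi) that for $\chi$ dominant on $\mathbb{F}(\mu)$ the object $\Delta_\mu(\Gamma_G(\chi)\otimes\OO_\X)$ has associated representation $\Gamma_{G^\mu}(\chi)$ --- the highest $\mu$-weight piece of $\Gamma_G(\chi)$ restricted to the Levi --- and that it vanishes when $\chi$ is dominant with $\langle\mu,\chi\rangle<b_\mu$; the first display in the Proposition is then just the Weyl character formula for $G^\mu$. What you do instead is verify this representation-theoretic fact purely by Weyl-character-formula combinatorics: the $\rho$-shift identity, the factorization $\Delta=\Delta^\mu\Delta^\perp$, and the geometric expansion of $1/\Delta^\perp$. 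Both are legitimate; the paper's version is cleaner, yours makes the mechanism visible and explicitly surfaces where the window bound enters.

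Two minor points you should tighten. First, your key inference --- that $\beta_{\geq b_\mu}$ of $q^{w*\chi}/\Delta$ vanishes whenever $\langle\mu, w*\chi\rangle<b_\mu$ --- needs slightly more than ``the $\mu$-weight-zero component of $1/\Delta^\perp$ is $1$''. What you actually need is that, in your chosen expansion, \emph{every} nonzero term of $1/\Delta^\perp$ has $\mu$-weight $\le 0$; otherwise the positive-weight tail of $1/\Delta^\perp$ could combine with $q^{w*\chi}/\Delta^\mu$ to produce a weight-$b_\mu$ contribution. The expansion direction you describe (powers of $q^{-\beta}$ when $\langle\mu,\beta\rangle>0$) does yield this, so the argument goes through, but you should say the stronger statement. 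Second, the dichotomy ``expand in $q^{-\beta}$ when $\langle\mu,\beta\rangle>0$, in $q^\beta$ when $\langle\mu,\beta\rangle<0$'' is only self-consistent in the first case --- in the second case the weight-zero coefficient of $\frac{1}{1-q^{-\beta}}$ is $0$, not $1$. Since $\mu$ is dominant and the relevant $\beta$ are positive roots, only the first case arises, so nothing breaks, but the phrasing invites confusion about the convention for $\mathbf{n}$ (which one should sanity-check against the $SL_2$ example to pin down). Your observation that the window bound $\chi+\rho\in\frac12\mathbb{W}$ is what delivers $\langle\mu,w*\chi\rangle\le b_\mu$ for all $w$, and your $\mathrm{Sym}^4$ counterexample showing this bound is essential, are both correct and are a useful addition that the paper leaves implicit.
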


\begin{proof}
The weight $\chi$ is dominant up to multiplication by an element of $W^\mu$, so we can assume it is dominant. For $\mathcal{F}\in \D(\X)$, we have that \[\Delta_\mu(\mathcal{F})=\beta_{\geq b_\mu}p_\mu^*(\mathcal{F}).\] For $\mathcal{F}$ with associated representation $\Gamma_G(\chi)$, $\Delta_\mu(\mathcal{F})$ has associated representation $\Gamma_{G^\mu}(\chi)$, and the first part follows. For the second part, by Proposition \ref{Leviequal} we can replace $\chi$ with any $w*\chi$ for $w\in W$, so we can assume that $\chi$ is dominant. Then $\chi$ is not on $\mathbb{F}(\mu)$, so $\langle \mu, p_\mu^*(\mathcal{F})\rangle<b_\mu,$ and the second part thus follows. 
\end{proof}

\begin{prop}\label{computations4}
Consider $\lambda, \tau$ two dominant cocharacters and $\chi$ a dominant weight with $\chi\in \mathbb{F}(\lambda)$. Assume there is a partial sum $\sigma$ of weights in $\textbf{A}_\lambda$  and an element $w\in W$ such that  $w*\left(\chi-\sigma\right)\in\mathbb{F}(\tau)$. Let $\mu=w^{-1}\tau$. Then
\[\sigma=N^\lambda_{\mu}+\sigma',\]
where $\sigma'$ is a partial sum of weights in $(N^{\mu})^{\lambda>0}$. Conversely, any such partial sum $\sigma$ has the property that $w*\left(\chi-\sigma\right)\in\mathbb{F}(\tau)$.
\end{prop}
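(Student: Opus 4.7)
The plan is to translate the face condition $w*(\chi-\sigma) \in \mathbb{F}(\tau)$ into a single numerical identity in the weight lattice and then extract the combinatorial form of $\sigma$ via a nonnegativity argument based on the symmetric assumption.

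First I will rewrite the condition $\langle \tau, w*(\chi-\sigma)\rangle = b_\tau$ in terms of $\mu = w^{-1}\tau$. Using $w*(x) = w(x+\rho) - \rho$, the dominance of $\tau$ (which gives $\langle \tau, \rho\rangle = \tfrac{1}{2}\langle \tau, \mathfrak{g}^{\tau>0}\rangle$), and the $W$-invariance of the weight multisets of $N$ and $\mathfrak{g}$ (so that $\langle \tau, N^{\tau>0}\rangle = \langle \mu, N^{\mu>0}\rangle$ and likewise for $\mathfrak{g}$, whence $b_\tau = b_\mu$), the condition reduces to
\[\langle \mu, \chi - \sigma + \rho\rangle = \tfrac{1}{2}\langle \mu, N^{\mu>0}\rangle. \quad (\star)\]

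Next I will substitute the decomposition $\chi + \rho = \tfrac{1}{2}N^{\lambda>0} + \rho_L + \psi$ coming from Subsection \ref{face} (with $L=G^\lambda$ and $\psi$ a weight of $L$), noting $\rho = \tfrac12 \mathfrak{g}^{\lambda>0} + \rho_L$. Classifying the weights of $N$ by joint $(\mu,\lambda)$-sign into blocks $N_{ab}$ and using the symmetric assumption $N_{ab} = -N_{-a,-b}$, one checks
\[\tfrac{1}{2}\langle \mu, N^{\lambda>0} - N^{\mu>0}\rangle = \langle \mu, N^\lambda_\mu\rangle + \tfrac{1}{2}\langle \mu, N_{-0}\rangle,\]
where $N_{-0}$ denotes the $\mu$-negative part of $N^\lambda$. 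Thus $(\star)$ takes the form
\[\langle \mu, \sigma - N^\lambda_\mu\rangle = \langle \mu, \rho_L + \psi\rangle + \tfrac{1}{2}\langle \mu, N_{-0}\rangle. \quad (\dagger)\]

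Splitting $\sigma = \sigma_+ + \sigma_0 + \sigma_-$ by $\mu$-sign (with $\sigma_\epsilon$ summing over the $\mu$-sign-$\epsilon$ part of the index set $T \subset \textbf{A}_\lambda$), the left-hand side of $(\dagger)$ becomes $\langle \mu, \sigma_+\rangle + \langle \mu, \sigma_- - N^\lambda_\mu\rangle$; both summands are manifestly nonnegative and vanish exactly when $T_+=\emptyset$ and $T_- = I^\lambda_\mu$, respectively. The proposition is thereby reduced to the vanishing of the right-hand side of $(\dagger)$, namely $\langle \mu, \rho_L + \psi\rangle = -\tfrac{1}{2}\langle \mu, N_{-0}\rangle$. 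Establishing this identity is the main obstacle; I expect it to follow by applying the analogue of $(\star)$ for the pair $(\lambda,\lambda)$ (which is automatic from $\chi \in \mathbb{F}(\lambda)$) together with the refinement $\nu$ of $\lambda$ and $\mu$ constructed in Subsection \ref{S} to compare the $\lambda$- and $\mu$-gradings of $N$ and of $\mathfrak{g}$ under $w$. Once this is in hand, the forward direction follows immediately from the nonnegativity analysis, and the converse is clear: for $\sigma = N^\lambda_\mu + \sigma'$ with $\sigma' \subset (N^\mu)^{\lambda>0}$, we have $\langle \mu, \sigma - N^\lambda_\mu\rangle = \langle \mu, \sigma'\rangle = 0$ since $\sigma'$ is a sum of $\mu$-zero weights, so $(\dagger)$ holds and hence $w*(\chi - \sigma) \in \mathbb{F}(\tau)$.
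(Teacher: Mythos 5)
Your reduction to the single scalar identity $(\dagger)$ is algebraically correct, and your analysis of the $\sigma$-dependent left-hand side (each of $\langle\mu,\sigma_+\rangle$ and $\langle\mu,\sigma_- - N^\lambda_\mu\rangle$ is nonnegative, with simultaneous vanishing characterizing $T_+=\emptyset$, $T_-=I^\lambda_\mu$) is sound. But you correctly flag, and then leave open, the one fact that carries all the content: that the $\sigma$-independent right-hand side $\langle\mu,\rho_L+\psi\rangle + \tfrac12\langle\mu,N_{-0}\rangle$ vanishes. Your suggestion that this should follow from ``$(\star)$ for the pair $(\lambda,\lambda)$'' does not work: that identity is exactly $\chi\in\mathbb{F}(\lambda)$, which lives entirely in the $\lambda$-direction and gives no control of $\langle\mu,\psi\rangle$ or $\langle\mu,\rho_L\rangle$. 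From the scalar condition $w*(\chi-\sigma)\in\mathbb{F}(\tau)$ alone there is no reason for the right-hand side of $(\dagger)$ to vanish, so your argument as written would only show that the right-hand side, whatever it is, equals the left-hand side.

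The missing ingredient is the structural fact that the paper invokes at the very start of its proof: since $\chi$ lies in the window and $\sigma$ is a partial sum of attracting weights, $(\chi-\sigma)^+ + \rho$ lies in $\tfrac12\mathbb{W}$ (cited from \cite[Proposition 3.6]{P1}, \cite{HLS}). Combined with $w*(\chi-\sigma)\in\mathbb{F}(\tau)$, the discussion in Subsection \ref{face} then yields a second zonotope decomposition $\chi-\sigma+\rho=\tfrac12 N^{\mu>0}+\phi$ with $\phi$ in the $\mu$-zonotope, parallel to $\chi+\rho=\tfrac12 N^{\lambda>0}+\psi$. The paper subtracts these two \emph{vector} decompositions to get $\sigma = N^\lambda_\mu + \psi - \phi$ and then compares $(\lambda,\mu)$-sign blocks, forcing $\psi=\psi^{\lambda0}_{\mu0}$, $\phi=\phi^{\lambda0}_{\mu0}+\phi^{\lambda-}_{\mu0}$, and $\phi^{\lambda0}_{\mu0}=\psi^{\lambda0}_{\mu0}$, whence $\sigma = N^\lambda_\mu - \phi^{\lambda-}_{\mu0}$ with $-\phi^{\lambda-}_{\mu0}$ a partial sum of $(N^\mu)^{\lambda>0}$. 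In your framework, this second zonotope decomposition is precisely what makes the right-hand side of $(\dagger)$ vanish, but you never invoke it. Passing to a single pairing with $\mu$ throws away the information that forces the block-by-block vanishings. So the proposal, while it isolates the correct nonnegativity dichotomy for $\sigma$, is genuinely incomplete: the converse is also affected, since it too relies on the unestablished vanishing of the right-hand side of $(\dagger)$.
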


\begin{proof}
The weight 
\begin{equation}\label{rhalf}
    \left(\chi-\sigma\right)^++\rho
\in \frac{1}{2}\mathbb{W}
\end{equation}
by the same argument as in \cite[Proposition 3.6]{P1}, see also \cite[Proof of Theorem 3.2]{HLS}.
Using the description in Subsection \ref{face}, 
write 
\begin{align*}
    \chi+\rho&=\frac{1}{2}N^{\lambda>0}+\psi\\
    w*\left(\chi-\sigma\right)+\rho&=\frac{1}{2}N^{\tau>0}+\phi',\text{ and so}\\
    \chi-\sigma+\rho&=\frac{1}{2}N^{\mu>0}+\phi,
\end{align*}
where $\psi$ is a sum of weights of $N^{\lambda}$ and $\phi$ is a sum of weights of $N^{\mu}$. For the first two relations above we use that $\lambda$ and $\tau$ are dominant. Then
\begin{equation}\label{sigma}
    \sigma=N^\lambda_\mu+\psi-\phi.
\end{equation}
Write 
\begin{align*}
    \psi&=\psi^{\lambda0}_{\mu+}+\psi^{\lambda0}_{\mu0}+\psi^{\lambda0}_{\mu-}\\
    \phi&=\phi^{\lambda+}_{\mu0}+\phi^{\lambda0}_{\mu0}+\phi^{\lambda-}_{\mu0},
\end{align*}
where $\psi^{\lambda0}_{\mu+}$ is a sum of weights in $(N^\lambda)^{\mu>0}$ etc. Then the decomposition \eqref{sigma} implies that
\begin{align*}
\psi&=\psi^{\lambda0}_{\mu0},\\ \phi&=\phi^{\lambda0}_{\mu0}+\phi^{\lambda-}_{\mu0},\\
\phi^{\lambda0}_{\mu0}&=\psi^{\lambda0}_{\mu0}.
\end{align*}
This implies that $\sigma=N^\lambda_\mu-\phi^{\lambda-}_{\mu0}$, where $-\phi^{\lambda-}_{\mu0}$ is a partial sum of weights in $(N^\mu)^{\lambda>0}$. 

Conversely, if $\sigma=N^\lambda_\mu+\sigma'$, the argument above shows that $\chi-\sigma+\rho=\frac{1}{2}N^{\mu>0}+\phi$ and by \eqref{rhalf} we have that $w*\left(\chi-\sigma\right)\in\mathbb{F}(\tau)$.
\end{proof}

\begin{proof}[Proof of Theorem \ref{copr}]
Let $x\in \KK_\cdot\left(\mathbb{D}(\X^\lambda)_b\right)$. By Proposition \ref{computations}, let $y\in \KK_\cdot(A/T)$ such that
\[x=\sum_{v\in W^\lambda}v\left(\frac{y}{\prod_{\beta\in\textbf{n}^\lambda}(1-q^{-\beta})}\right).\] We may assume that $y$ is of dominant weight $\chi$.
By Proposition \ref{computations2}, we have that
\[p_{{\lambda}^{-1}*}q_{\lambda^{-1}}^*\sum_{v\in W^\lambda}
        v\left(\frac{y}{\prod_{\beta\in\textbf{n}^\lambda}(1-q^{-\beta})}\right)=
        \sum_{u\in W/W^\lambda} u\sum_{I\subset\textbf{A}_\lambda}
        \sum_{v\in W^\lambda}v\left(\frac{(-1)^{|I|}yq^{-\sigma_I}}{\prod_{\beta\in\textbf{n}}(1-q^{-\beta})}\right)
        .\]
The weight of $yq^{-\sigma_I}$ is $\chi-\sigma_I$. By Propositions \ref{computations3} and \ref{computations4}, such an element has non-zero $\mu$-restriction if and only if there exists $w\in W$ such that 
\begin{equation}\label{sigmaI}
    \sigma_I=N^\lambda_{w\mu}+\sigma_{I'},
    \end{equation}
    where $I'$ is a subset of $\textbf{A}^{w\mu}_\lambda$, the set of weights $\beta$ in $\textbf{A}_\lambda$ such that $\langle w\mu, \beta\rangle=0.$ 
    
    Fix $w$. Let $\nu$ and $\nu'$ be the cocharacters constructed as in Subsection \ref{S}. 
The weight $N^\lambda_{w\mu}$ and the set $\textbf{A}^{w\mu}_\lambda$ depend on the coset $W/W^{w\mu}$; if they have associated $\nu$ as above, then they depend on $W^\lambda/W^\nu\subset W/W^{w\mu}$. 
Recall the element $w'$ as the end of Subsection \ref{S}. The element $yq^{-\sigma_I}$ has weight $\chi-\sigma_I$. By \eqref{decompositionchi}, the weight $\left(\chi-\sigma_I\right)^+$ is on $\mathbb{F}(\nu)$. 
Then 
\begin{align*}
    \sum_{v\in W^\lambda}
    v\left(
    \frac{y}{\prod_{\beta\in\textbf{n}^\lambda}
    (1-q^{-\beta})}\right)
    q^{-N^\lambda_{w\mu}-\sigma_{I'}}
    &=\sum_{v\in W^\lambda}
    v\left(
    \frac{y}{\prod_{\beta\in\textbf{n}^\lambda}
    (1-q^{-\beta})}\right)
    w'\left(
    q^{-N^\lambda_{w_s\mu}-\sigma_{J}}
    \right)\\
    &=\sum_{v\in W^\lambda}
    \left(
    \frac{yq^{-N^\nu_{\nu'}-\sigma_{J}}}
    {\prod_{\beta\in\textbf{n}^\lambda}(1-q^{-\beta})}
    \right),
\end{align*}
where $J$ is a subset of $\textbf{A}^{w_s\mu}_{\lambda}$. 
Define
\[m_{\lambda\nu}(x):=\frac{1}{|W^\nu|}\sum_{u\in W/W^\lambda} u\sum_{J\subset\textbf{A}^{w_s\mu}_\lambda}
        \sum_{v\in W^\lambda} v\left(\frac{(-1)^{d^\nu_{\nu'}+|J|}
        yq^{-N^\nu_{\nu'}-\sigma_{J}}}
        {\prod_{\beta\in\textbf{n}}(1-q^{-\beta})}\right).\]
Then $m_\lambda=\sum_{\nu}m_{\lambda\nu}$.  It suffices to show that the following diagram commutes
\begin{equation*}
\begin{tikzcd}
K_\cdot(\D(\X^\lambda)_b)\arrow[d, "\Delta^\lambda_\nu"]\arrow[r, "m_{\lambda\nu}"]& K_\cdot(\D(\X))\arrow[d, "\Delta_\mu"]\\
K_\cdot(\D(\X^\nu)_b) \arrow[r, "\widetilde{m^\mu_{\nu}}"] & K_\cdot(\D(\X^\mu)_b).
\end{tikzcd}
\end{equation*}
Let $\tau$ be the sum of weights $\beta$ in $\mathfrak{n}$ such that $w_s^{-1}\beta$ is not in $\mathfrak{n}$. Then $-w_s\tau=\mathfrak{g}^\lambda_{w_s\mu}$ because the two sides are sums over the weights in the following two sets
\[\{\beta\in \textbf{n}\text{ such that }-\beta\in w_s\textbf{n}\}=\{\beta\in \textbf{g}_\lambda\text{ such that }-\beta\in\textbf{g}_{w_s\mu}\}.\]
Recall the setting of Subsection \ref{sss}. We have that
\begin{align*}
    w_s^{-1}\left(\frac{(-1)^{d^\nu_{\nu'}}
        yq^{-N^\nu_{\nu'}}}
        {\prod_{\beta\in\textbf{n}}(1-q^{-\beta})}\right)
        &=\frac{(-1)^{d^\nu_{\nu'}}
        w_s^{-1}\left(yq^{-N^\nu_{\nu'}}\right)}{(-1)^{e^\nu_{\nu'}}q^{\tau}\prod_{\beta\in\textbf{n}}(1-q^{-\beta})}\\
        &=\frac{(-1)^{c^\nu_{\nu'}}
        w_s^{-1}\left(yq^{-N^\nu_{\nu'}-w_s\tau}\right)}{\prod_{\beta\in\textbf{n}}(1-q^{-\beta})}\\
        &=\frac{(-1)^{c^\nu_{\nu'}}
        w_s^{-1}\left(yq^{-\mathcal{N}^\nu_{\nu'}}\right)}{\prod_{\beta\in\textbf{n}}(1-q^{-\beta})}.
\end{align*}
We can thus rewrite 
\[m_{\lambda\nu}(x):=\frac{1}{|W^\nu|}
\sum_{J'\subset\textbf{A}^{\mu}_{w_s^{-1}\lambda}}
        \sum_{v\in W} 
        v\left(\frac{(-1)^{c^\nu_{\nu'}+|J'|}
        w_s^{-1}\left(yq^{-\mathcal{N}^{\nu'}_{\nu}}\right)q^{-\sigma_{J'}}}
        {\prod_{\beta\in\textbf{n}}(1-q^{-\beta})}\right),\]
and thus 
\[\Delta_\mu m_{\lambda\nu}(x)=\frac{1}{|W^\nu|}\sum_{J'\subset\textbf{A}^{\mu}_{w_s^{-1}\lambda}}
        \sum_{v\in W^\mu} 
        v\left(\frac{(-1)^{c^\nu_{\nu'}+|J'|}
        w_s^{-1}\left(yq^{-\mathcal{N}^{\nu'}_{\nu}}\right)q^{-\sigma_{J'}}}
        {\prod_{\beta\in\textbf{n}^\mu}(1-q^{-\beta})}\right).\]
This is the same as the composition of left-bottom maps:
\begin{align*}
    \widetilde{m^\mu_{\nu}}\Delta^\lambda_\nu(x)
    &=\widetilde{m^\mu_{\nu}}\Delta^\lambda_\nu
    \left(\sum_{v\in W^\lambda}v\left(\frac{y}{\prod_{\beta\in\textbf{n}^\lambda}(1-q^{-\beta})}\right)\right)\\
    &=\widetilde{m^\mu_{\nu}}\sum_{v\in W^\nu}v\left(\frac{y}{\prod_{\beta\in\textbf{n}^\nu}(1-q^{-\beta})}\right)\\
    &=m^\mu_{\nu'}
    \sum_{v\in W^\nu} v\left( \frac{(-1)^{c^\nu_{\nu'}}w_s^{-1}\left(yq^{-\mathcal{N}^\nu_{\nu'}}\right)}{\prod_{\beta\in\textbf{n}^\nu}(1-q^{-\beta}) }
    \right)\\
    &=\frac{1}{|W^\nu|}\sum_{J'\subset\textbf{A}^{\mu}_{w_s^{-1}\lambda}}
        \sum_{v\in W^\mu} 
        v\left(\frac{(-1)^{c^\nu_{\nu'}+|J'|}
        w_s^{-1}\left(yq^{-\mathcal{N}^{\nu'}_{\nu}}\right)q^{-\sigma_{J'}}}
        {\prod_{\beta\in\textbf{n}^\mu}(1-q^{-\beta})}\right)
\end{align*}
where the second equality follows from Proposition \ref{computations3} and the last one by Proposition \ref{computations2}.
\end{proof}


\subsection{Primitive K-theory: the local case}\label{intKtheory}

Recall the setting and notations from the beginning of Subsection \ref{subcoproduct}. In this Subsection, we define $\textbf{P}\KK_\cdot(\X)\subset \KK_\cdot(\mathbb{D}(\X))$ which appears in Theorem \ref{decompK}.
Assume that $\X=A/G$ is a local stack. 


\begin{prop}
Let $\lambda$ be a dominant cocharacter and let $\mathfrak{S}_\lambda\subset W$ be the set of elements $w_s$ for $s\in W^\lambda \backslash W\slash W^\lambda$ such that the weight $\nu$ constructed in Subsection \ref{S} is equal to $\lambda$. Then $\mathfrak{S}_\lambda$ is a group and it acts on $\KK_\cdot(\D(\X^\lambda)_b)$ via $\widetilde{\text{sw}}$.
\end{prop}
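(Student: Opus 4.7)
The plan has two parts: identifying $\mathfrak{S}_\lambda$ as a group, and verifying that $\widetilde{\text{sw}}$ gives a group action.

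First, I would unwind the definition of $\mathfrak{S}_\lambda$. Setting $\mu = \lambda$ in the construction of Subsection \ref{S}, the refined partition $V^{w_s}_{ij} = V_i \cap w_s V_j$ (where $V = \bigsqcup V_i$ is the partition associated to $\lambda$) coincides, up to reordering, with the $\lambda$-partition precisely when $w_s$ permutes the parts $\{V_i\}$ as a set; equivalently, when $w_s \in N_W(W^\lambda)$, the normalizer of $W^\lambda$ in $W$. Thus $\mathfrak{S}_\lambda$ is in natural bijection with $N_W(W^\lambda)/W^\lambda$, which is a group since $W^\lambda$ is normal in its normalizer.

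Second, to show that $\widetilde{\text{sw}}$ defines an action, I would first establish well-definedness on double cosets $W^\lambda w_s W^\lambda$ using the $W^\lambda$-invariance of $y \in \KK_\cdot(\D(\X^\lambda)_b)$ coming from Subsection \ref{Weylinv}, together with the observation that for $w_s \in N_W(W^\lambda)$ the class $\mathcal{N}^\lambda_{w_s\lambda}$ is itself $W^\lambda$-invariant, since $W^\lambda$ preserves both defining conditions $\lambda(\beta) > 0$ and $w_s\lambda(\beta) < 0$. A direct computation then yields
\[
\widetilde{\text{sw}}_s(\widetilde{\text{sw}}_t(y)) = (-1)^{c^\lambda_{w_s\lambda} + c^\lambda_{w_t\lambda}}\,(w_t w_s)^{-1}(y)\, q^{-\mathcal{N}^A_B - \mathcal{N}^B_C},
\]
where $A = (w_t w_s)^{-1}\lambda$, $B = w_s^{-1}\lambda$, $C = \lambda$, while $\widetilde{\text{sw}}_{w_t w_s}(y) = (-1)^{c^\lambda_{w_t w_s\lambda}}\,(w_t w_s)^{-1}(y)\, q^{-\mathcal{N}^A_C}$. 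Matching these reduces the action axiom to the transitivity identities $\mathcal{N}^A_B + \mathcal{N}^B_C = \mathcal{N}^A_C$ and $c^A_B + c^B_C \equiv c^A_C \pmod{2}$.

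Third, I would prove the transitivity identities by a case analysis on the signs of $A(\beta), B(\beta), C(\beta)$ for each weight $\beta$ of $N$ or $\mathfrak{g}$. When $A(\beta) > 0$ and $C(\beta) < 0$, $\beta$ contributes to the RHS and to exactly one of $\mathcal{N}^A_B$ or $\mathcal{N}^B_C$ depending on the sign of $B(\beta)$, matching the RHS. The remaining ``spurious'' LHS contributions (coming from weights with $A > 0, B < 0, C > 0$ or $A < 0, B > 0, C < 0$) come in $\beta \leftrightarrow -\beta$ pairs that cancel, crucially using that both $N$ (by the symmetric hypothesis on $\X$) and $\mathfrak{g}$ (via the adjoint representation) are symmetric as multisets of weights. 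The degenerate case $C(\beta) = \lambda(\beta) = 0$ forces $\beta$ to be a weight of the Levi $L = G^\lambda$; since $w_s, w_t$ normalize $L$, one also has $A(\beta) = B(\beta) = 0$, so $\beta$ contributes to no term. Finally, $\widetilde{\text{sw}}_e = \id$ is immediate from $\mathcal{N}^\lambda_\lambda = 0$ and $c^\lambda_\lambda = 0$.

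The main obstacle is verifying the transitivity identities above; they rely essentially on the symmetric hypothesis on $\X$ in order to cancel the spurious contributions via $\beta \leftrightarrow -\beta$.
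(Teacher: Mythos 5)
Your proposal is correct and follows essentially the same route as the paper: identify $\mathfrak{S}_\lambda$ as (representatives for) the block permutations normalizing $W^\lambda$, then verify the cocycle relation $\widetilde{\text{sw}}_s\,\widetilde{\text{sw}}_t = \widetilde{\text{sw}}_{w_tw_s}$ by a direct computation, which reduces to the additivity $\mathcal{N}^A_B+\mathcal{N}^B_C=\mathcal{N}^A_C$ and the parity $c^A_B+c^B_C\equiv c^A_C\pmod 2$. The one genuine addition in your write-up is that you spell out the $\beta\leftrightarrow-\beta$ cancellation (using the symmetric hypothesis on $N$ and the adjoint action on $\mathfrak{g}$) that makes those transitivity identities true; the paper's chain of equalities invokes them silently in its last step.
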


\begin{proof}
The elements $w_s$ above are the elements of $W$ that induce permutations of $I^\lambda$ and which do not permute elements in $V_i$ for $i\in I^\lambda$ among themselves. These elements are clearly closed under multiplication and taking inverses. 

For the second part, consider elements $w_1, w_2\in \mathfrak{S}_\lambda$ and let $y\in \KK_\cdot\left(\D(\X^\lambda)_b\right)$. Let $\widetilde{\text{sw}_1}$, $\widetilde{\text{sw}_2}$, $\widetilde{\text{sw}_3}$ be the swap maps for the elements $w_1, w_2$, and $w_2w_1$, respectively. We need to show that $\widetilde{\text{sw}_1}\widetilde{\text{sw}_2}=\widetilde{\text{sw}_3}$:
\begin{align*}
    &(-1)^{c^\lambda_{w_1\lambda}}
    w_1^{-1}
    \left((-1)^{c^\lambda_{w_2\lambda}}w_2^{-1}\left(
    yq^{-\mathcal{N}^\lambda_{w_2\lambda}}
    \right)q^{-\mathcal{N}^\lambda_{w_1\lambda}}
    \right)=\\
    &(-1)^{c^\lambda_{w_1\lambda}+ c^\lambda_{w_2\lambda}}
    w_1^{-1}w_2^{-1}\left(yq^{-\mathcal{N}^\lambda_{w_2\lambda}-\mathcal{N}^{w_2\lambda}_{w_2w_1\lambda}}\right)=\\
    &(-1)^{c^\lambda_{w_2w_1\lambda}}
    (w_2w_1)^{-1}\left(
    yq^{-\mathcal{N}^\lambda_{w_2w_1\lambda}}
    \right).
\end{align*}
\end{proof}
In the example from Subsection \eqref{example}, $\mathfrak{S}_\lambda$ is trivial unless $a=b$, case in which it is the symmetric group $\mathfrak{S}_2$. More generally, for $\lambda$ a cocharacter of $GL(n)$ with corresponding decomposition in distinct parts $d_1, \cdots, d_k$ with multiplicities $m_1, \cdots, m_k$, the group $\mathfrak{S}_\lambda$ is the product of symmetric groups $\times_{i=1}^k\mathfrak{S}_{m_i}$.
In the framework of the above Proposition, denote by 
\begin{align*}
\text{Sym}_\lambda:\KK_\cdot\left(\D(\X^\lambda)_b\right)&\to \KK_\cdot\left(\D(\X^\lambda)_b\right)\\
x&\mapsto \sum_{\sigma\in \mathfrak{S}_\lambda}\sigma(x).
\end{align*}
Define \[\textbf{B}\KK_\cdot\left(\D(\X^\lambda)_b\right)=\text{image}\left(\bigoplus_{\nu}m_\nu: \KK_\cdot\left(\D(\X^\nu)_b\right)\to \KK_\cdot\left(\D(\X^\lambda)_b\right)\right),\] where the sum is over all non-trivial cocharacters $\nu$ of $G^\lambda$.
We define $\textbf{P}\KK_\cdot\left(\D(\X^\lambda)_b\right)$ inductively on $\dim G^\lambda$  
such that 
\begin{equation}\label{OPB}
    \KK_\cdot\left(\D(\X^\lambda)_b\right)=\textbf{P}\KK_\cdot\left(\D(\X^\lambda)_b\right)\oplus \textbf{B}\KK_\cdot\left(\D(\X^\lambda)_b\right).\end{equation}
There are then natural surjections $\pi_\lambda: \KK_\cdot\left(\D(\X^\lambda)_b\right)\twoheadrightarrow \textbf{P}\KK_\cdot\left(\D(\X^\lambda)_b\right)$.

When $\dim G^\lambda=0$, then $\textbf{P}\KK_\cdot\left(\D(\X^\lambda)_b\right)= \KK_\cdot\left(\D(\X^\lambda)_b\right)$. Assume that $\dim G>0$. For any Levi $L<G$, choose a dominant cocharacter $\lambda_L$ such that $G^{\lambda_L}=L$. Denote by $m_H=m_{\lambda_H}$ etc.
Define
\[\textbf{P}\KK_\cdot\left(\D(\X^{\lambda_L})_b\right)=
\bigcap_{H<L}\left(\text{Sym}_H \pi_H \Delta_H: \KK_\cdot\left(\mathbb{D}(\X^{\lambda_L})_b\right)
\to
\textbf{P}\KK_\cdot\left(\mathbb{D}(\X^{\lambda_H})_b\right)\right).\]

Denote by $\iota: \textbf{P}\KK_\cdot\left(\D(\X^{\lambda_L})_b\right)\hookrightarrow \KK_\cdot\left(\D(\X^{\lambda_L})_b\right)$ the natural map, and denote by $\Phi_H:=\text{Sym}_H \pi_H \Delta_H$.

\begin{prop}\label{compequal}
The composition 
\[\textbf{P}\KK_\cdot\left(\D(\X^{\lambda_L})_b\right)
\xrightarrow{m_L}
\KK_\cdot(\D(\X))\xrightarrow{\pi_L\Delta_L}
\textbf{P}\KK_\cdot\left(\D(\X^{\lambda_L})_b\right)\] is $\pi_L\Delta_L m_L(x)=\frac{1}{|W^\lambda|} \sum_{\sigma\in\mathfrak{S}_\lambda}\sigma(x)$.
\end{prop}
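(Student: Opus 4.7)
The plan is to apply Theorem \ref{copr} with $\mu=\lambda=\lambda_L$, sort the resulting sum by the Levi type of the dominant cocharacter $\nu_s$, kill the ``off-diagonal'' terms using $\pi_L$, and recognize what remains as the claimed symmetrization.

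First I would apply Theorem \ref{copr} to expand
\[
\Delta_L m_L(x)\;=\;\sum_{s\in \textbf{S}} \widetilde{m^L_{\nu_s}}\,\Delta^L_{\nu_s}(x)\;=\;\sum_{s\in \textbf{S}} m^L_{\nu'_s}\circ\widetilde{\text{sw}_s}\circ\Delta^L_{\nu_s}(x),
\]
with $\textbf{S}=W^\lambda\backslash W/W^\lambda$ and $\nu_s,\nu'_s$ the dominant cocharacters constructed from $s$ in Subsection \ref{S}. The natural split is $\textbf{S}=\mathfrak{S}_\lambda\sqcup(\textbf{S}\setminus\mathfrak{S}_\lambda)$. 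For $s\in\mathfrak{S}_\lambda$ one has $\nu_s=\nu'_s=\lambda$, which is a central cocharacter of $L=G^\lambda$; the relevant attracting and fixed substacks of $\X^L$ all coincide with $\X^L$ itself, collapsing $\Delta^L_\lambda$ to the identity and $m^L_\lambda$ to $\tfrac{1}{|W^\lambda|}\mathrm{id}$ (by Propositions \ref{computations2} and \ref{computations3}), so the contribution is $\tfrac{1}{|W^\lambda|}\widetilde{\text{sw}_s}(x)$. For $s\notin\mathfrak{S}_\lambda$ the cocharacter $\nu'_s$ has Levi $G^{\nu'_s}\subsetneq L$, so the summand $m^L_{\nu'_s}(\widetilde{\text{sw}_s}\Delta^L_{\nu_s}(x))$ lies in $\textbf{B}\KK_\cdot(\D(\X^L)_b)$ and is annihilated by $\pi_L$. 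Combining these gives
\[
\pi_L\Delta_L m_L(x)\;=\;\tfrac{1}{|W^\lambda|}\sum_{s\in\mathfrak{S}_\lambda}\pi_L\,\widetilde{\text{sw}_s}(x).
\]

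The remaining step, and the main obstacle, is to show that $\widetilde{\text{sw}_s}$ preserves $\textbf{P}\KK_\cdot(\D(\X^L)_b)$ for every $s\in\mathfrak{S}_\lambda$, so that $\pi_L\,\widetilde{\text{sw}_s}(x)=\widetilde{\text{sw}_s}(x)$ for $x\in\textbf{P}$; equivalently, that $\textbf{P}$ is $\mathfrak{S}_\lambda$-invariant. Because $\mathfrak{S}_\lambda$ lies in $N_W(W^\lambda)$, conjugation by $w_s$ permutes proper Levi subgroups $H<L$ via $H\mapsto w_sHw_s^{-1}$ and permutes non-trivial cocharacters of $L$. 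The shuffle formulas of Propositions \ref{computations2} and \ref{computations3} then yield naturality identities of the shape $\Delta_H\circ\widetilde{\text{sw}_s}=\widetilde{\text{sw}_s}\circ\Delta_{w_s^{-1}Hw_s}$ and $m_\nu\circ\widetilde{\text{sw}}=\widetilde{\text{sw}}\circ m_{w_s^{-1}\nu}$, up to the signs and $q^{-\mathcal{N}}$-twists absorbed in $\widetilde{\text{sw}}$. Invariance of $\textbf{B}$ follows from the second identity; invariance of the defining kernels of $\textbf{P}$ follows by induction on $\dim G^\lambda$ from the first, together with the inductively established equivariance of each $\pi_H$ under conjugation. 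Chasing the sign and twist bookkeeping through these identities is the real technical weight of the argument, but it is formal once the inductive hypothesis on $\pi_H$ is in place.
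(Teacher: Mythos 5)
Your computation up through
\[
\pi_L\Delta_L m_L(x)\;=\;\tfrac{1}{|W^\lambda|}\sum_{s\in\mathfrak{S}_\lambda}\pi_L\,\widetilde{\text{sw}_s}(x)
\]
is exactly the paper's argument: apply Theorem \ref{copr} with $\mu=\lambda$, observe that the summands with $\nu'\ne\lambda$ land in $\textbf{B}\KK_\cdot(\D(\X^L)_b)$ and are killed by $\pi_L$, and for $s\in\mathfrak{S}_\lambda$ collapse $\Delta^L_\lambda$ to the identity and $m^L_\lambda$ to $\frac{1}{|W^\lambda|}\mathrm{id}$. That part is correct and is precisely what the paper does.

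Where you go further is in isolating the remaining assertion --- that $\pi_L$ acts as the identity on $\widetilde{\text{sw}_s}(x)$, equivalently that $\mathfrak{S}_\lambda$ preserves $\textbf{P}\KK_\cdot(\D(\X^L)_b)$. The paper's proof simply writes $\pi_L\Delta_L m_L(x)=\frac{1}{|W^\lambda|}\sum_{\mathfrak{S}_\lambda}\sigma(x)$ and does not comment on this point; the invariance is implicitly relied on not only here but also in the definition of $\textbf{P}$ (where the codomain of $\mathrm{Sym}_H\pi_H\Delta_H$ is declared to be $\textbf{P}\KK_\cdot(\D(\X^{\lambda_H})_b)$) and in the $\mathfrak{S}_L$-invariants appearing in Theorem \ref{deco}. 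So you have correctly spotted an unspoken step rather than introduced an unnecessary detour. Your proposed route to establish it --- conjugation by $w_s\in N_W(W^\lambda)$ permuting proper Levis $H<L$, together with naturality identities for $\Delta_H$ and $m_\nu$ under $\widetilde{\text{sw}}$ extracted from the shuffle formulas of Propositions \ref{computations2} and \ref{computations3}, and an induction on $\dim G^\lambda$ --- is the natural way to do it, but as you acknowledge you have only sketched it; to make the proposal self-contained you would need to actually verify the compatibility $\Delta_H\circ\widetilde{\text{sw}_s}=\widetilde{\text{sw}_s}\circ\Delta_{w_s^{-1}Hw_s}$ with the correct sign and $q^{-\mathcal{N}}$-twist and check that $\mathfrak{S}_\lambda$-conjugation is compatible with the inductively defined projectors $\pi_H$.
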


\begin{proof}
By Theorem \ref{copr}, we have that
\[\Delta_\lambda m_\lambda(x)=\sum_{\textbf{S}}\widetilde{m_{\nu'}}\Delta_\nu(x).\] For $\nu'$ different from $\lambda$, the element $\widetilde{m_{\nu'}}\Delta_\nu(x)$ is in $\textbf{B}\KK_\cdot\left(\D(\X^{\lambda_L})_b\right)$. Then 
\[\pi_L\Delta_L m_L(x)=
\sum_{\mathfrak{S}_\lambda}\widetilde{m_{\nu'}}\Delta_\nu(x)=\frac{1}{|W^\lambda|}\sum_{\mathfrak{S}_\lambda}\sigma(x).\]
\end{proof}

\begin{prop}\label{compdiff}
Let $L$ and $E$ be  proper Levi groups of $G$ such that $E\nsubseteq L$. Let $\lambda$ and $\mu$ be the associated cocharacters to these Levi groups. 
The composition 
\[\textbf{P}\KK_\cdot\left(\D(\X^\lambda)_b\right)\xrightarrow{m_L}
\KK_\cdot(\D(\X))\xrightarrow{\Delta_E}\KK_\cdot\left(\D(\X^\mu)_b\right)\xrightarrow{\pi_E} \textbf{P}\KK_\cdot\left(\D(\X^\mu)_b\right)\] is zero.
\end{prop}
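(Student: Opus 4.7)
The plan is to apply the product--coproduct compatibility of Theorem \ref{copr} to decompose $\Delta_E m_L(x)=\Delta_\mu m_\lambda(x)$ over the double coset set $\textbf{S}=W^\lambda\backslash W/W^\mu$, and then to dispose of every resulting term by $\pi_E$. Explicitly, Theorem \ref{copr} gives
\[\Delta_\mu m_\lambda(x) \;=\; \sum_{s\in\textbf{S}} m^\mu_{\nu'_s}\,\widetilde{\text{sw}_s}\,\Delta^\lambda_{\nu_s}(x),\]
with $\nu_s$ and $\nu'_s$ the dominant cocharacters of $G$ constructed from $s$ in Subsection \ref{S}.

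I would first kill the ``easy'' terms: whenever $G^{\nu'_s}\subsetneq G^\mu=E$, the cocharacter $\nu'_s$ is nontrivial inside $E$, so $m^\mu_{\nu'_s}$ has image in $\textbf{B}\KK_\cdot\bigl(\D(\X^\mu)_b\bigr)$ and is killed by $\pi_E$. The residual set $\textbf{S}_0=\{s\in\textbf{S}:G^{\nu'_s}=E\}$ is characterized by $w_sEw_s^{-1}\subseteq L$; the hypothesis $E\nsubseteq L$ forces $E\neq L$, and since two standard Levi subgroups in the same $W$-conjugacy class cannot properly contain one another, this inclusion is strict, so $H_s:=w_sEw_s^{-1}$ is a proper sub-Levi of $L$. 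For each $s\in\textbf{S}_0$, $m^\mu_{\nu'_s}=m^\mu_\mu$ is a scalar multiple of the identity (as $\mu$ is central inside $E$), and $\Delta^\lambda_{\nu_s}(x)$ agrees with $\Delta^\lambda_{\lambda_{H_s}}(x)$ up to a sign and $q$-twist, since $\nu_s$ and $\lambda_{H_s}$ lie in the same open face of the dominant chamber of $L$ (compare Proposition \ref{Leviequal}).

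Primitivity then enters through the defining property of $\textbf{P}\KK_\cdot\bigl(\D(\X^\lambda)_b\bigr)$: for every proper sub-Levi $H<L$, $\text{Sym}_H\pi_H\Delta^\lambda_{\lambda_H}(x)=0$. The twist $\widetilde{\text{sw}_s}$ is an isomorphism $\KK_\cdot\bigl(\D(\X^{H_s})_b\bigr)\xrightarrow{\sim}\KK_\cdot\bigl(\D(\X^E)_b\bigr)$ respecting the $\textbf{P}\oplus\textbf{B}$ splitting, since $w_s$-conjugation sends nontrivial cocharacters of $H_s$ to nontrivial cocharacters of $E$. Grouping the residual sum by the sub-Levi $H_s=H$ and summing across the fibres $\textbf{S}_0^H:=\{s\in\textbf{S}_0:H_s=H\}$, the contribution should reorganize, after the $\widetilde{\text{sw}}$ twist, into $\text{Sym}_H$ applied to $\pi_H\Delta^\lambda_{\lambda_H}(x)$, which vanishes by primitivity, modulo terms in $\textbf{B}\KK_\cdot\bigl(\D(\X^\mu)_b\bigr)$ killed by $\pi_E$.

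The main obstacle is precisely this reorganization step. In general the naive identification $\textbf{S}_0^H\leftrightarrow\mathfrak{S}_H$ can fail: small examples show a single double coset in $\textbf{S}_0^H$ pairing with a $\mathfrak{S}_H$ of cardinality strictly greater than $1$, so the required symmetrization is recovered only after accounting for interactions across all sub-Levis $H\subseteq L$ in the $W$-conjugacy class of $E$, together with the action of the relative Weyl group $N_W(L)/W^L$. Tracking the sign and $q$-twist factors hidden inside $\widetilde{\text{sw}_s}$ throughout is the technical heart of the argument, and follows the shuffle bookkeeping already used in the proofs of Theorem \ref{copr} and Proposition \ref{compequal}.
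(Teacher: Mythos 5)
Your first two steps are correct and match the paper's opening move: apply Theorem \ref{copr} to write $\Delta_E m_L(x)=\sum_{s\in\textbf{S}}\widetilde{m^\mu_{\nu'_s}}\Delta^\lambda_{\nu_s}(x)$, and observe that any term with $G^{\nu'_s}\subsetneq E$ lands in $\textbf{B}\KK_\cdot\left(\D(\X^\mu)_b\right)$ and is killed by $\pi_E$. The gap is in the third step: the residual set $\textbf{S}_0=\{s:G^{\nu'_s}=E\}$ is in fact \emph{empty} under the hypothesis, so no reorganization or appeal to primitivity is needed. Concretely, from Subsection \ref{S} one has $G^{\nu'_s}=G^\mu\cap w_s^{-1}G^\lambda w_s=E\cap w_s^{-1}Lw_s$, so $G^{\nu'_s}=E$ would force $E\subseteq w_s^{-1}Lw_s$, equivalently $w_sEw_s^{-1}\subseteq L$. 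With Levi groups understood as conjugacy-class representatives — which is the normalization implicit throughout this subsection, since $m_L$, $\Delta_L$, $\pi_L$ are indexed by one chosen $\lambda_L$ per Levi — the hypothesis $E\nsubseteq L$ rules out precisely that any $W$-conjugate of $E$ is contained in $L$. Hence every summand lies in $\textbf{B}\KK_\cdot\left(\D(\X^\mu)_b\right)$ and the composition vanishes; note in particular that the hypothesis $x\in\textbf{P}\KK_\cdot$ is never used.

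Your attempt to argue $H_s:=w_sEw_s^{-1}\subsetneq L$ from ``$E\neq L$ and conjugate Levis cannot properly contain one another'' does not close the case $H_s=L$ (i.e.\ $E$ conjugate to $L$ but $E\neq L$), and even if it did, the ensuing reorganization over $\mathfrak{S}_H$ with the twist and sign bookkeeping — which you yourself flag as unresolved — is an argument the paper does not need and does not make. The paper's proof is three lines precisely because the dichotomy is sharper than you assumed: either $\nu'_s=\mu$ (ruled out by $E\nsubseteq L$) or $G^{\nu'_s}$ is strictly smaller than $E$ (killed by $\pi_E$). Everything else in your writeup is superfluous.
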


\begin{proof}
By Theorem \ref{copr}, we have that
\[\Delta_E m_L(x)=\sum_{\textbf{S}}\widetilde{m_{\nu'}}\Delta_\nu(x).\] 
If $E\nsubseteq L$, there is no $s\in \textbf{S}$ such that $\nu'=\mu$, and so the right hand side is in $\textbf{B}\KK_\cdot(\D(\X^\mu)_b)$. 
\end{proof}

We now assume the statements in \eqref{OPB} for $L<G$.

\begin{prop}
There is a surjection 
\[\bigoplus_{L<G}
\textbf{P}\KK_\cdot\left(\mathbb{D}(\mathcal{X}^{\lambda_L})_b\right)^{\mathfrak{S}_L}\twoheadrightarrow \textbf{B}\KK_\cdot(\D(\X)).\]
\end{prop}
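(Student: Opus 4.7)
The plan is to realise the map as $\sum_L m_L$ restricted to the invariant primitive pieces, and show it surjects in three steps.

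First, I would show that each $m_L$ is invariant under the $\mathfrak{S}_L$-action on $\KK_\cdot(\mathbb{D}(\X^{\lambda_L})_b)$. For $w_s\in\mathfrak{S}_L$ the cocharacter $w_s\lambda_L$ has Levi $L$ by the definition of $\mathfrak{S}_L$, so Proposition \ref{Leviequal} gives
\[m_{\lambda_L}(y) = m_{w_s\lambda_L}\bigl((-1)^{c^{\lambda_L}_{w_s\lambda_L}} y\, q^{-\mathcal{N}^{\lambda_L}_{w_s\lambda_L}}\bigr).\]
Combined with the Weyl symmetry $m_{w\lambda}(wx)=m_\lambda(x)$ visible from the shuffle formula of Proposition \ref{computations2}, this yields $m_L\circ\widetilde{\mathrm{sw}}_s = m_L$. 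Hence $m_L$ factors through the $\mathfrak{S}_L$-coinvariants, which coincide with the invariants rationally, so the image of $m_L$ on $\KK_\cdot(\mathbb{D}(\X^{\lambda_L})_b)$ equals its image on $\KK_\cdot(\mathbb{D}(\X^{\lambda_L})_b)^{\mathfrak{S}_L}$.

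Second, I would use Corollary \ref{corD} to rewrite every generator $m_\nu(y)$ of $\textbf{B}\KK_\cdot(\D(\X))$ in the form $m_{\lambda_L}(y')$ where $L=G^\nu$, obtaining
\[\textbf{B}\KK_\cdot(\D(\X)) = \sum_{L<G} m_L\bigl(\KK_\cdot(\mathbb{D}(\X^{\lambda_L})_b)\bigr).\]
Next, the transitivity relation $m_L\circ m^L_H = m_H$ for proper inclusions of Levi subgroups $H<L<G$, together with the inductive decomposition $\KK_\cdot(\mathbb{D}(\X^{\lambda_L})_b)=\textbf{P}\KK_\cdot(\mathbb{D}(\X^{\lambda_L})_b)\oplus \textbf{B}\KK_\cdot(\mathbb{D}(\X^{\lambda_L})_b)$ from \eqref{OPB}, shows that the contribution of the $\textbf{B}$-summand is absorbed by strictly smaller Levi groups. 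Iterating on $\dim G^L$ reduces the sum to
\[\textbf{B}\KK_\cdot(\D(\X)) = \sum_{L<G} m_L\bigl(\textbf{P}\KK_\cdot(\mathbb{D}(\X^{\lambda_L})_b)\bigr),\]
and combining with the first step gives the desired surjection.

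The main obstacle is the transitivity $m_L\circ m^L_H = m_H$. Although geometrically transparent from the composition of attracting correspondences $\X^{\lambda_H\ge 0}\cap\X^{L,\lambda_H\ge 0}\to \X^{\lambda_L\ge 0}\to \X$ (using that $\lambda_H$ can be chosen to factor through $L$), one has to check at the level of K-theory that the iterated shuffle formula from Proposition \ref{computations2} applied first to $\lambda_H$ inside $L$ and then to $\lambda_L$ inside $G$ agrees with the direct shuffle for $\lambda_H$ inside $G$; this is analogous to the verification carried out in the proof of Theorem \ref{copr}, and goes through by the same bookkeeping of weights in $\textbf{A}$ and $\textbf{g}$. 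A secondary subtlety is that $\textbf{P}\KK_\cdot(\mathbb{D}(\X^{\lambda_L})_b)$ should be chosen as a $\mathfrak{S}_L$-stable complement of $\textbf{B}\KK_\cdot(\mathbb{D}(\X^{\lambda_L})_b)$; this is possible because the latter is visibly $\mathfrak{S}_L$-invariant (generated by images from $\mathfrak{S}_L$-compatible smaller Levi groups) and rational representations of finite groups are semisimple.
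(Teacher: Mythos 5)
Your proof is correct and follows essentially the same route as the paper: show that $m_L$ is $\mathfrak{S}_L$-invariant via Proposition~\ref{Leviequal}, so it factors through the symmetrization map; then use Corollary~\ref{corD} to collapse the sum over cocharacters $\nu$ to one per Levi $L$, and induct via \eqref{OPB} to replace $\KK_\cdot(\D(\X^{\lambda_L})_b)$ by its primitive summand. The one place where you go beyond the paper's very terse argument is in explicitly naming the transitivity $m_L\circ m^L_H=m_H$ (up to the normalization constant $1/|W^L|$, which is harmless for image computations) as the mechanism absorbing $m_L(\textbf{B}\KK_\cdot(\D(\X^{\lambda_L})_b))$ into images from strictly smaller Levi groups; the paper leaves this implicit in the phrase ``the statement follows using \eqref{OPB} and Proposition~\ref{corD}.'' This is a genuine ingredient of the argument, and your observation that it can be checked either geometrically (composition of attracting correspondences) or by an iterated shuffle-formula computation in the style of Theorem~\ref{copr} is accurate. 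The only small imprecision is in your second remark: $\textbf{P}\KK_\cdot(\D(\X^{\lambda_L})_b)$ is already a specific subspace defined by the inductive intersection formula, not merely ``some chosen'' $\mathfrak{S}_L$-stable complement, though it does turn out to be $\mathfrak{S}_L$-stable, which is what you need.
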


\begin{proof}
The image of $m_{L}: \textbf{P}\KK_\cdot\left(\D(\X^{\lambda_L})_b\right)\to \KK_\cdot(\D(\X))$ factors through the symmetrization map
\[\textbf{P}\KK_\cdot\left(\D(\X^{\lambda_L})_b\right)\xrightarrow{\text{Sym}_L}
\textbf{P}\KK_\cdot\left(\D(\X^{\lambda_L})_b\right)^{\mathfrak{S}_L}\xrightarrow{m_L}
\KK_\cdot(\D(\X))\] by Proposition \ref{Leviequal}. 
The statement follows using \eqref{OPB} for $L<G$
and Proposition \ref{corD}. 
\end{proof}


\begin{thm}\label{deco}
There is a decomposition 
\[\left(\iota, \bigoplus_{L<G}m_{\lambda_L}\right):
\textbf{P}\KK_\cdot(\mathbb{D}(\X))\oplus\bigoplus_{L<G} \textbf{P}\KK_\cdot\left(\mathbb{D}(\mathcal{X}^{\lambda_L})_b\right)^{\mathfrak{S}_L}\xrightarrow{\sim}
\KK_\cdot(\mathbb{D}(\X)).\]
\end{thm}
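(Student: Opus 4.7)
The plan is induction on $\dim G$, with the trivial base case $\dim G = 0$ (no proper Levis). Assuming the decomposition \eqref{OPB} and hence Theorem \ref{deco} for every proper Levi $L < G$, the projection $\pi_L$ onto $\textbf{P}\KK_\cdot(\D(\X^{\lambda_L})_b)$ exists, so the composite $\Phi_L := \mathrm{Sym}_L \pi_L \Delta_L$ is well-defined for each such $L$. The key observation I would exploit is that Propositions \ref{compequal} and \ref{compdiff} together equip the family of compositions $\Phi_E m_L$, indexed by pairs of proper Levis, with a block-triangular structure.

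Concretely, I would study the endomorphism
\[M := \Phi \circ \bigoplus_{L < G} m_L : T \longrightarrow T, \qquad T := \bigoplus_{L<G} \textbf{P}\KK_\cdot\big(\D(\X^{\lambda_L})_b\big)^{\mathfrak{S}_L},\]
where $\Phi := \bigoplus_{E<G}\Phi_E$. By Proposition \ref{compdiff}, its $(E,L)$-block $\Phi_E m_L$ vanishes whenever $E \nsubseteq L$. By Proposition \ref{compequal}, the diagonal block $E = L$ acts as multiplication by the nonzero rational scalar $|\mathfrak{S}_L|^2/|W^{\lambda_L}|$. Ordering the proper Levis by any linear extension of the inclusion partial order (smaller Levis first), $M$ is block upper-triangular with invertible diagonal, and hence itself an isomorphism.

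Surjectivity of $\Psi$ would then be immediate: for $y \in \KK_\cdot(\D(\X))$, letting $z := M^{-1}(\Phi(y))$ gives $\Phi(\bigoplus_L m_L(z)) = \Phi(y)$, so $p := y - \bigoplus_L m_L(z) \in \ker \Phi = \textbf{P}\KK_\cdot(\D(\X))$ by definition, yielding $y = \iota(p) + \bigoplus_L m_L(z)$. For injectivity, given a relation $\iota(p) + \bigoplus_L m_L(y_L) = 0$, I would induct from the top of the Levi poset downward. At a maximal proper Levi $E$, applying $\Phi_E$ annihilates $p$ (since $p \in \ker\Phi$) and every term $\Phi_E m_L(y_L)$ with $L \neq E$ (maximality forces $E \nsubseteq L$), leaving $\tfrac{|\mathfrak{S}_E|^2}{|W^{\lambda_E}|} y_E = 0$, so $y_E = 0$. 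For $E$ at each subsequent level, every $L \supsetneq E$ already has $y_L = 0$ by previous steps, and every other $L \neq E$ satisfies $E \nsubseteq L$; applying $\Phi_E$ again forces $y_E = 0$, and once all $y_L$ vanish, $p = 0$ as well.

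The hard work is really packaged into Propositions \ref{compequal} and \ref{compdiff}; the main subtlety I would highlight is that the argument never needs to compute $\Phi_E m_L$ in the delicate case $E \subsetneq L$, where Theorem \ref{copr} would produce a nontrivial mixture of swap-twisted multiplications from intermediate Levis. Those potentially complicated blocks sit strictly above the diagonal of $M$, where upper-triangularity renders them harmless for surjectivity, and for injectivity they are neutralized by the ongoing vanishing of $y_L$ for $L \supsetneq E$ established at the earlier stages of the downward induction.
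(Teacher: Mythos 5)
Your proposal is correct and matches the paper's argument in substance: the paper also deduces injectivity from Propositions \ref{compequal} and \ref{compdiff}, and proves surjectivity by exactly the downward induction you describe (at each step subtracting $c\cdot m_L\Phi_L(x)$ for a maximal $L$ with $\Phi_L(x)\neq 0$, the constant $c$ being the reciprocal of your diagonal scalar). Your packaging of this as the invertibility of a single block upper-triangular endomorphism $M$ is a cleaner bookkeeping device, but it is the same underlying calculation and relies on the same two propositions.
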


\begin{proof}
Using Propositions \ref{compequal} and \ref{compdiff}, we see that the map is an injection. To see it is a surjection, let $x\in \KK_\cdot(\D(\X))$ and assume that $\Phi_H(x)=0$ for all $L<H<G$ and 
$\pi_L(x)\neq 0$. By Propositions \ref{compequal} and \ref{compdiff}, there is a constant $c$ such that
\[y:=x-c\cdot m_L\Phi_L(x)\in \KK_\cdot(\D(\X))\]
satisfies $\Phi_H(y)=0$ for $L\leq H<G$. Repeating this process, we see that the map is indeed surjective.
\end{proof}

We now prove Theorem \ref{decompK} in the local case.

\begin{cor}\label{thm5}
There is a decomposition $\KK_\cdot(\X)=\textbf{P}\KK_\cdot(\X)\oplus \textbf{B}\KK_\cdot(\X)$.
\end{cor}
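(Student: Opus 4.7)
The plan is to reduce to the local case $\X = A/G$ using Thomason's result that rational K-theory descends along étale covers (cited in Subsection \ref{abcd}), and then combine Theorems \ref{symsod} and \ref{deco}. Theorem \ref{symsod} induces a direct sum $\KK_\cdot(\X) = \KK_\cdot(\mathbb{A}(\X)) \oplus \KK_\cdot(\mathbb{D}(\X))$, while Theorem \ref{deco} further refines
\[\KK_\cdot(\mathbb{D}(\X)) = \textbf{P}\KK_\cdot(\X) \oplus V, \qquad V := \bigoplus_{L<G} m_{\lambda_L}\bigl(\textbf{P}\KK_\cdot(\mathbb{D}(\mathcal{X}^{\lambda_L})_b)^{\mathfrak{S}_L}\bigr).\]

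Both $\KK_\cdot(\mathbb{A}(\X))$ and $V$ lie inside $\textbf{B}\KK_\cdot(\X)$: the former because $\mathbb{A}(\X)$ is by definition generated by pushforwards $p_{\lambda\ast}q_\lambda^\ast(\mathcal{E})$ from attracting stacks, and the latter because each $m_{\lambda_L}$ is, up to the combinatorial factor $1/|W^{\lambda_L}|$, exactly such a pushforward. Consequently $\textbf{P}\KK_\cdot(\X) + \textbf{B}\KK_\cdot(\X) \supset \textbf{P}\KK_\cdot(\X) + \KK_\cdot(\mathbb{A}(\X)) + V = \KK_\cdot(\X)$, which gives the spanning half of the direct sum.

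For directness, $\textbf{P}\KK_\cdot(\X) \cap \textbf{B}\KK_\cdot(\X) = 0$, I reduce to showing $\textbf{B}\KK_\cdot(\X) \cap \KK_\cdot(\mathbb{D}(\X)) \subset V$: combined with $\textbf{P}\KK_\cdot(\X) \subset \KK_\cdot(\mathbb{D}(\X))$, $\KK_\cdot(\mathbb{A}(\X)) \cap \KK_\cdot(\mathbb{D}(\X)) = 0$, and $\textbf{P}\KK_\cdot(\X) \cap V = 0$ from Theorem \ref{deco}, the vanishing follows. To see the inclusion, take any pushforward $p_\ast q^\ast(y)$ from an attracting stack with cocharacter $\lambda$ and Levi $L = G^\lambda$. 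Proposition \ref{Leviequal} lets me replace $\lambda$ by the dominant $\lambda_L$, and Corollary \ref{corD} shows that, modulo an element of $\KK_\cdot(\mathbb{A}(\X))$, the $\mathbb{D}$-image is already captured by $m_{\lambda_L}(\KK_\cdot(\mathbb{D}(\X^{\lambda_L})_b))$. Applying Theorem \ref{deco} inductively on $\dim G$ to the strictly smaller stack $\X^{\lambda_L}$ (whose reductive group $L$ is a proper Levi of $G$), and invoking the product--coproduct compatibility of Theorem \ref{copr}, identifies this image with the summand $V_L \subset V$.

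The main technical obstacle is the last identification — showing $\pi_{\mathbb{D}} \circ p_\ast q^\ast\bigl(\KK_\cdot(\X^\lambda)\bigr) \subset V$ for every cocharacter $\lambda$. This requires unwinding the Borel--Bott--Weil formula of Proposition \ref{bbw} weight-by-weight, applying the shuffle manipulations of Propositions \ref{computations2}--\ref{computations4} to separate contributions coming from different Levi strata, and matching them with the symmetrized primitive pieces from strict Levi subgroups via the inductive structure of Theorem \ref{deco} and the formula of Proposition \ref{compequal}.
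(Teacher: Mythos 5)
Your high-level reduction matches the paper's own (terse) proof, which cites precisely Theorem \ref{symsod}, Proposition \ref{Leviequal}, and Theorem \ref{deco}, and your intermediate structure $\KK_\cdot(\X) = \KK_\cdot(\mathbb{A}(\X)) \oplus \textbf{P}\KK_\cdot(\X) \oplus V$ is correct; the spanning half is clean. Two points of caution on the directness half, where most of the content actually lies.

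First, a logical refinement: you state the target as the intersection statement $\textbf{B}\KK_\cdot(\X) \cap \KK_\cdot(\mathbb{D}(\X)) \subset V$, and then analyze an individual pushforward $p_\ast q^\ast(y)$. But $\textbf{B}\KK_\cdot(\X)$ is a \emph{sum} of images, so an element of the intersection need not itself be a single pushforward — it can be a linear combination whose $\mathbb{A}$-components cancel. The argument you actually sketch proves the stronger (and correct) statement $\pi_{\mathbb{D}}\bigl(p_{\lambda\ast}q_\lambda^\ast(y)\bigr) \in V$ for every cocharacter $\lambda$ and every $y$, where $\pi_{\mathbb{D}}$ is the projection onto the $\KK_\cdot(\mathbb{D})$-summand determined by Theorem \ref{symsod}; this implies $\textbf{B}\KK_\cdot(\X) \subset \KK_\cdot(\mathbb{A}(\X)) \oplus V$ and hence the intersection claim. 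You should phrase it this way to avoid the gap.

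Second, the key step itself is genuinely nontrivial and not fully discharged by the results you cite. Proposition \ref{Leviequal} and Corollary \ref{corD} concern inputs $y$ in $\KK_\cdot(\mathbb{D}(\X^{\lambda})_b)$ --- i.e.\ the top $\lambda$-weight piece --- but an arbitrary $y\in\KK_\cdot(\X^\lambda)$ decomposes into all $\lambda$-weight pieces, and for weight pieces with $w\neq b_\lambda$ the Koszul resolution of Proposition \ref{bbw} produces summands that fall out of the window in both directions. Handling these requires passing to $\lambda^{-1}$ and other cocharacters via Proposition \ref{computations4} and the shuffle identities, precisely the combinatorics you label the ``main technical obstacle.'' You have correctly isolated where the work lies and identified the right tools (Propositions \ref{computations2}--\ref{computations4}, Theorem \ref{copr}, the inductive structure of Theorem \ref{deco}, and Proposition \ref{compequal}), but as written the step is sketched rather than proved. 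The paper's one-line citation conceals this same work, so your proposal matches the paper's approach and intent but leaves the same implicit gap explicit rather than filling it.

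A minor organizational note: Corollary \ref{thm5} is stated and proved in the paper purely in the local case $\X = A/G$; the reduction to the local case via Thomason's \'etale descent, which you invoke at the start, is how the paper promotes this corollary to the global Theorem \ref{decompK} afterwards, not a step within the corollary's own proof.
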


\begin{proof}
This follows from Theorem \ref{symsod}, Proposition \ref{Leviequal}, and Theorem \ref{deco}.
\end{proof}

\subsection{Compatibility of decompositions along \'etale maps}\label{compa}
Let $e:\X'\to\mathcal{X}$ be an \'etale map. Let $\mathcal{S}$ be a $\Theta$-stack of $\X$ with associated fixed stack $\Z$. Let $\mathcal{S}'$ be a $\Theta$-stack in $\X'$ contained in $e^{-1}(\mathcal{S})$, and let $\Z'$ be its associated fixed stack. Finally, let $w\in \mathbb{Z}$. Then \begin{align*}
    e^*: D^b(\Z)_w\to D^b(\Z')_w,\, e^*: D^b(\mathcal{S})\to D^b(\mathcal{S}'),\\
    e_*: D^b(\Z')_w\to D^b(\Z)_w,\, e^*: D^b(\mathcal{S}')\to D^b(\mathcal{S}).
\end{align*}
By the construction of the categories $\mathbb{D}$ from Section \ref{ncr}, we obtain functors \begin{align*}
    e^*: \mathbb{D}(\X)&\to \mathbb{D}(\Y),\\
    e_*: \mathbb{D}(\Y)&\to \mathbb{D}(\X).
\end{align*}
By the construction of the spaces $\textbf{P}\KK$ and $\textbf{B}\KK$, we see that $e_*$ and $e^*$ respect these spaces for $\X$ and $\Y$ quotient stacks of smooth affine varieties by reductive groups.

\subsection{Primitive K-theory: the global case}

Let $\X$ be a symmetric stack satisfying Assumption B and let $X$ be its good moduli space. Consider a direct system of \'etale covers $\mathcal{A}$ containing \'etale maps $\Y\to\X$ as in Theorem \ref{ahr}. 
Then, by \cite[Corollary 2.17]{Th}:
\begin{equation}\label{thomason}
\check{\HH}^p\left(\mathcal{A}, \KK_q(-)\right)\Rightarrow \KK_{q-p}(\X),
\end{equation}
where $\check{\HH}$ denotes \v{C}ech cohomology and the spectral sequence converges strongly. It is essential that we use rational K-theory to obtain this statement.
Any $\lambda:B\mathbb{G}_m\to\X$ induces a cocharacter $\lambda$ in local charts $\Y$. Further, any local attracting locus corresponds to a map $\lambda:B\mathbb{G}_m\to\X$ and thus determines an attracting stack. Denote by $\X^\lambda$ the corresponding fixed stack.
For $\lambda: B\mathbb{G}_m\to \X$, define
$\KK_\cdot^\lambda(\Y):=\KK_\cdot\left(\Y^\lambda\right)$. Then
\[\check{\HH}^p\left(\mathcal{A}, \KK_q^\lambda(-)\right)\Rightarrow \KK_{q-p}\left(\X^\lambda\right).\]
Thus the following spectral sequence converges strongly:
\begin{equation}\label{bthomason}
\check{\HH}^p\left(\mathcal{A}, \textbf{B}\KK_q(-)\right)\Rightarrow \textbf{B}\KK_{q-p}(\X).
\end{equation}
By Theorem \ref{thm5}, \eqref{thomason}, and \eqref{bthomason},
we thus obtain that the following spectral sequence converges strongly and define $\textbf{P}\KK_\cdot(\X)$ such that
\[\check{\HH}^p\left(\mathcal{A}, \textbf{P}\KK_q(-)\right)\Rightarrow \textbf{P}\KK_{q-p}(\X).\]

\begin{proof}[Proof of Theorem \ref{decompK}]
The decomposition claimed in 
Theorem \ref{decompK} follows from the construction of $\textbf{P}\KK_\cdot(\X)$ and $\textbf{B}\KK_\cdot(\X)$ and by Theorem \ref{thm5}.
\end{proof}

\subsection{Categorification of intersection cohomology}
\subsubsection{}
The categories $D^b(\X)$ have natural dg enhancements and the admissible subcategories $\mathbb{D}(\X)$ also have natural dg enhancements \cite[Section 4.1]{}.
Recall the definitions from Subsection \ref{ncmotives}.
The splitting \[\KK_0(\mathbb{D}(\X))\to \textbf{P}\KK_0(\mathbb{D}(\X))\to \KK_0(\mathbb{D}(\X))\] induces an idempotent of $e_\mathcal{\X}\in \text{Hom}_{\textbf{Hmo}_{0;\mathbb{Q}}}(\mathbb{D}(\X), \mathbb{D}(\X))$. Indeed, all the functors used to construct the above splitting are constructed from the functors for attracting and fixed stacks
\begin{align*}
    p_*&: D^b(\mathcal{S})\to D^b(\X)\\
    p^*&: D^b(\X)\to D^b(\mathcal{S})\\
    q^*_w&: D^b(\mathcal{Z})_w\xrightarrow{\sim} D^b(\mathcal{S})_w,
\end{align*}
see \cite[Amplification 3.18]{HL2} for the last functor. Thus the functors used
are induced by Fourier--Mukai transforms in $\text{rep}\left(D^b(\mathcal{S}), D^b(\X)\right)$ and $\text{rep}\left(D^b(\mathcal{Z}), D^b(\X)\right)$, respectively. 
We thus obtain a noncommutative motive:
\[\mathbb{D}^{\text{nc}}(\X):=
\left(\mathbb{D}(\X),e_\X\right)\in \textbf{Hmo}_{0;\mathbb{Q}}^{\natural}.\]

Consider the Chern character
\[\text{ch}: \KK_\cdot(\X)\to \widehat{\HH^{\cdot}}(\X):= \prod_{j\in \mathbb{Z}}\HH^{i+2j}(\X).\]
We write $\text{gr}^\cdot K_\cdot(\X)$ for the associated graded with respect to the codimension filtration \cite[Definition 3.7, Section 5.4]{G}. By Theorems \ref{Thmcoh} and \ref{decompK}, we obtain:

\begin{cor}
There is an inclusion
$\text{gr}^\cdot \KK_\cdot(\mathbb{D}^{\text{nc}}(\X))\subset \textbf{P}^{\leq 0}\HH^\cdot(\X).$
\end{cor}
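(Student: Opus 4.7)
The plan is to derive the inclusion from Theorems \ref{Thmcoh} and \ref{decompK} combined with the Grothendieck--Riemann--Roch (GRR) compatibility of the Chern character with the structural maps $p_*q^*$ that cut out $\textbf{B}\KK_\cdot$ and $\textbf{B}$.

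First, I would check that $\text{ch}$ sends $\textbf{B}\KK_\cdot(\X)$ into $\widehat{\textbf{B}}$. For an attracting stack $\Z\xleftarrow{q}\mathcal{S}\xrightarrow{p}\X$, GRR for the proper map $p$ gives
\[\text{ch}(p_*q^*\alpha)=p_*\bigl(q^*\text{ch}(\alpha)\cdot \text{td}(T_p)\bigr).\]
Because $q$ is an affine bundle, $q^*$ is an isomorphism on singular cohomology, so $\text{td}(T_p)=q^*\gamma$ for a unique $\gamma\in\HH^\cdot(\Z)$ (and in fact $T_p$ consists of representations pulled back from $\Z$). The right-hand side thus equals $p_*q^*\bigl(\text{ch}(\alpha)\cdot\gamma\bigr)\in \textbf{B}$. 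Summing over attracting stacks yields $\text{ch}(\textbf{B}\KK_\cdot(\X))\subset\widehat{\textbf{B}}$; after passing to the associated graded for the codimension filtration, the Todd correction drops to leading order $1$ and one obtains $\text{gr}^\cdot\text{ch}\bigl(\text{gr}^\cdot\textbf{B}\KK_\cdot(\X)\bigr)\subset \textbf{B}$.

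Second, I would upgrade this to $\text{ch}(\textbf{P}\KK_\cdot(\X))\subset \widehat{\textbf{P}^{\leq 0}\HH^\cdot(\X)}$ by exploiting the kernel description of $\textbf{P}\KK_\cdot(\X)$ in the local model $\X=A/G$ from Section \ref{intKtheory}: it is the common kernel of the restriction-type maps $\text{Sym}_H\,\pi_H\,\Delta_H$, where $\Delta_H=\beta_{\geq b}p_H^*$ is built from pullback. Pullbacks commute with $\text{ch}$, and the support analysis in the proofs of Propositions \ref{CH} and \ref{prp} identifies $\textbf{P}^{\leq 0}\HH^\cdot(\X)$ with the corresponding joint kernel of cohomological restrictions to the proper fixed stacks $\X^{\lambda_H}$. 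An induction on $\dim G$ paralleling the inductive construction of $\textbf{P}\KK_\cdot$ in Section \ref{intKtheory} then shows that the Chern character of a class in $\textbf{P}\KK_\cdot$ restricts trivially to every proper $\X^{\lambda_H}$ and so lies in the full-support summand $\widehat{\textbf{P}^{\leq 0}\HH^\cdot(\X)}$. Finally, I would globalize via the \'etale descent of Subsection \ref{compa} and the strongly convergent spectral sequences \eqref{thomason} and \eqref{bthomason}, then pass to the associated graded for the codimension filtration to remove the completion on the cohomology side and conclude
\[\text{gr}^\cdot\KK_\cdot(\mathbb{D}^{\text{nc}}(\X))\hookrightarrow \textbf{P}^{\leq 0}\HH^\cdot(\X).\]

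The main obstacle is the second step. The two direct sum decompositions in Theorems \ref{Thmcoh} and \ref{decompK} are not automatically matched by $\text{ch}$, so one cannot simply invoke complementarity with the inclusion $\text{ch}(\textbf{B}\KK_\cdot)\subset \widehat{\textbf{B}}$: a priori, $\text{ch}$ of a class in $\textbf{P}\KK_\cdot$ could still have a nontrivial $\textbf{B}$-component. One genuinely needs a simultaneous kernel description of $\textbf{P}\KK_\cdot$ and $\textbf{P}^{\leq 0}\HH^\cdot$ as joint kernels of compatible restriction maps, and this is precisely what the inductive constructions in Sections \ref{comp} and \ref{intKtheory} provide.
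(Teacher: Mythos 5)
Your first step is correct and matches what "by Theorems \ref{Thmcoh} and \ref{decompK}" can actually deliver: GRR along the proper maps $p$ for the attracting correspondences, combined with the fact that the Todd class has leading term $1$, shows that the leading term of $\text{ch}(p_*q^*\alpha)$ is $p_*q^*\text{ch}(\alpha)$, so $\text{gr}^\cdot\text{ch}$ carries $\text{gr}^\cdot\textbf{B}\KK_\cdot(\X)$ into $\textbf{B}$. The paper's own proof of the corollary is only the one-line citation of the two decomposition theorems, so there is no detailed argument in the text to compare against; your attempt to fill this in is reasonable and you correctly flag that compatibility of the two splittings under $\text{ch}$ is the actual content.

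Your second step, however, has a concrete gap. You claim that the Chern character of a class in $\textbf{P}\KK_\cdot(\X)$ ``restricts trivially to every proper $\X^{\lambda_H}$,'' and that $\textbf{P}^{\leq 0}\HH^\cdot(\X)$ is the joint kernel of the corresponding cohomological restriction maps. This cannot be right: the class $[\OO_\X]$ lies in $\textbf{P}\KK_\cdot(\X)$ (it is killed by each $\Delta_H=\beta_{\geq b_H}p_H^*$ precisely because the weight truncation $\beta_{\geq b_H}$ annihilates the weight-zero sheaf $\OO_{\X^{\lambda_H\geq 0}}$ whenever $b_H>0$), yet $\text{ch}([\OO_\X])=1$ and $p_H^*(1)=1\neq 0$ in $\HH^\cdot(\X^{\lambda_H})$. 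So the cohomological pullback to a fixed stack does \emph{not} vanish on $\textbf{P}^{\leq 0}\HH^\cdot(\X)$, and $\textbf{P}^{\leq 0}\HH^\cdot(\X)$ is not the joint kernel of the maps you propose. The reason the K-theoretic and cohomological pictures fail to match in the naive way is that the operator $\beta_{\geq b_H}$ (the window cutoff) has no counterpart on $\HH^\cdot(\Z_H)$, where the $\mathbb{G}_m$-weight is always zero. The paper's Propositions \ref{CH} and \ref{prp} characterize $\textbf{P}^{\leq 0}\HH^\cdot(\X)$ as the full-support piece of the Decomposition Theorem, not as a joint kernel of restrictions, and Section \ref{intKtheory} gives no cohomological analogue of $\Phi_H$. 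To repair the argument you would need a different matching of the two splittings --- for instance, passing through topological K-theory and Subsection \ref{Kirwa}'s comparison via \cite{HLP}, or arguing with supports directly --- rather than the nonexistent joint-kernel description of $\textbf{P}^{\leq 0}\HH^\cdot(\X)$.
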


If $\X$ is symmetric and satisfies Assumption C, then $\textbf{P}^{\leq 0}\HH^\cdot(\X)\cong\IH^\cdot(X)$. In this situation, we define \textit{the intersection K-theory of} $X$:
\[\IK_\cdot(X):=\KK_\cdot(\mathbb{D}^{\text{nc}}(\X)).\]
There is a natural map \[\text{ch}:\IK_\cdot(X)\to \IH^\cdot(X)\] obtained using the splittings from Theorem \ref{Thmcoh} and Theorem \ref{decompK}:
\[\IK_\cdot(X)\hookrightarrow \KK_\cdot(\X)\xrightarrow{\text{ch}} \widehat{\HH^\cdot}(\X)\twoheadrightarrow \IH^\cdot(X).\]
Directly from the definition of $\mathbb{D}^{\text{nc}}(\X)$, intersection K-theory satisfies a version of Kirwan surjectivity \cite[Theorem 2.5]{Ki}:
\[\KK_\cdot(\X)_\mathbb{Q}\twoheadrightarrow \IK_\cdot(X).\]

\subsubsection{}\label{Kirwa}
Denote by $\KK^{\text{top}}$ the Blanc topological K-theory of a dg category \cite{Bl}. Recall that for $\X$ a smooth stack, $\KK^{\text{top}}\left(D^b(\X)\right)$ recovers the Atiyah-Segal equivariant topological K-theory \cite[Theorem 3.9]{HLP}. 
For $j\in \{0, 1\}$, denote by $\text{gr}^\cdot\KK_j^{\text{top}}(\X)$ the associated graded with respect to filtration $F^{\geq i}$ induced by $\HH^{\geq j+2i}(\X)$ via \[\text{ch}: \KK_j^{\text{top}}(\X)\to \prod_{i\in\mathbb{Z}}\HH^{j+2i}(\X).\] The idempotent $e$ induces a well-defined direct summand $\KK^{\text{top}}\left(\mathbb{D}^{\text{nc}}(\X)\right)$ of $\KK^{\text{top}}\left(\mathbb{D}(\X)\right)$. For $j\in \{0, 1\}$, we have that 
\[\text{gr}^i \KK^{\text{top}}_j(\X)\cong \HH^{j+2i}(\X,\mathbb{Q}).\]
Indeed, it suffices to show the statement for quotient stacks, case in which both sides can be computed using Totaro's approximations $S_n^o$ of $\X$ from the proof of Proposition \ref{Thmcoh}. 
The isomorphism for a scheme $S_n^o$ holds by the Atiyah-Hirzebruch theorem. By Theorems \ref{Thmcoh} and \ref{decompK}, we have that $\text{gr}^i \KK^{\text{top}}_j\left(\mathbb{D}^{\text{nc}}(\X)\right)\cong \IH^{j+2i}(X,\mathbb{Q})$ for $j\in \{0, 1\}$. If the natural map
\begin{equation}\label{iso123}
    \KK^{\text{top}}_j\left(\mathbb{D}^{\text{nc}}(\X)\right)\xrightarrow{\sim} \text{gr}^i \KK^{\text{top}}_j\left(\mathbb{D}^{\text{nc}}(\X)\right),\end{equation}
is an isomorphism, then \begin{equation}\label{kirwaniso}
    \KK^{\text{top}}_j\left(\mathbb{D}^{\text{nc}}(\X)\right)\cong \IH^{j+2i}(X,\mathbb{Q})\end{equation}
    for $j\in \{0, 1\}$. We claim that if the Kirwan resolution $Y\to X$ is a scheme, then \eqref{iso123} holds. Indeed, it suffices to check \eqref{iso123} for $\mathbb{D}(\X)$ instead of $\mathbb{D}^{\text{nc}}(\X)$. It suffices to check that the Chern character map for $\mathbb{D}(\X)$ is injective. By Corollary \ref{kirw}, it suffices to check that the Chern character map for $Y$ is injective. For a scheme $Y$, the Chern character is an isomorphism by the Atiyah-Hirzebruch theorem. 

Denote by $\HP$ the periodic cyclic homology of a dg category. 
The idempotent $e$ induces a well-defined direct summand $\HP_\cdot\left(\mathbb{D}^{\text{nc}}(\X)\right)$ of $\HP_\cdot\left(\mathbb{D}(\X)\right)$. 
By \cite[Theorem A]{HLP} and \eqref{kirwaniso}, we also obtain that if the Kirwan resolution $Y\to X$ is a scheme, then 
\[\HP_i\left(\mathbb{D}^{\text{nc}}(\X)\right)\cong \bigoplus_{j\in\mathbb{Z}} \IH^{i+2j}(X,\C).\]

\end{document}